\title{Limit theorems for the realised semicovariances of multivariate Brownian semistationary processes}
\author{Yuan Li\thanks{E-mail: yuan.li18@imperial.ac.uk},\  Mikko S. Pakkanen\thanks{E-mail: m.pakkanen@imperial.ac.uk},\  Almut E.D. Veraart\thanks{E-mail: a.veraart@imperial.ac.uk}}
\date{}
\let\thetitle\@title
\let\theauthor\@author
\let\thedate\@date
\newtheorem{prop}{Proposition}
\newtheorem{thm}{Theorem}
\newtheorem{Def}{Definition}
\newtheorem{lem}{Lemma}
\newtheorem{rmk}{Remark}
\newtheorem{asm}{Assumption}
\numberwithin{thm}{subsection}
\numberwithin{prop}{subsection}
\numberwithin{Def}{subsection}
\numberwithin{cla}{subsection}
\numberwithin{rmk}{subsection}
\numberwithin{asm}{subsection}
\numberwithin{cor}{subsection}
\numberwithin{exa}{subsection}
\numberwithin{lem}{subsection}
\begin{document}

\begin{center}
\vspace{-1cm}
\maketitle
\vspace{-2cm}
\emph{Department of Mathematics, Imperial College London, SW7 2AZ, UK}
\rule{18cm}{1pt}
\end{center}
\textbf{Abstract}\\
In this article, we will introduce the realised semicovariance for Brownian semistationary (BSS) processes, which is obtained from the decomposition of the realised covariance matrix into components based on the signs of the returns and study its in-fill asymptotic properties. More precisely, weak convergence in the space of c\`{a}dl\`{a}g functions endowed with the Skorohod topology for the realised semicovariance of a general Gaussian process with stationary increments is proved first. The proof is based on the Breuer-Major theorem and on a moment bound for sums of products of non-linearly transformed Gaussian vectors. Furthermore, we establish a corresponding stable convergence. Finally, a central limit theorem for the realised semicovariance of multivariate BSS processes is established. These results extend the limit theorems for the realised covariation to a result for non-linear functionals.\\

\noindent\emph{Keywords:} Realised semicovariance; Multivariate Brownian semistationary process; Central limit theory; Malliavin calculus\\
\rule{18cm}{1pt}

\pagebreak

\renewcommand{\thesection}{\arabic{section}}
\section{Introduction}
The idea of studying the realised semicovariance for Brownian semistationary processes is inspired by the article \cite{TB2020}, where the authors originally proposed the realised semicovariance, which is obtained from the decomposition of the realised covariance matrix into components based on the signs of the increments, and studied its in-fill asymptotic (which means that the time step between observations goes to zero) properties for semimartingales. Intuitively, a statistic composed of positive increments and one composed of negative increments should carry distinct economic information, which has been studied and illustrated empirically in \cite{TB2020}. Furthermore, they proved that using semicovariances significantly improved volatility forecasts for certain stock markets. Thus, we want to extend their work to an alternative, non-semimartingale setting. Such more general settings are of interest in financial applications when transaction costs and market microstructure noise are to be taken into account, see \cite{MP2011}. Hence we devote our attention to the Brownian semistationary (BSS) process, which was first introduced in \cite{BN20091} and is not necessarily a semimartingale, and is interesting because of its various applications, especially in turbulence \citep{CH2013}, finance \citep{MP2011} and energy \citep{BN2013}. It is a novel topic to study the in-fill asymptotic properties of realised semivariance/ semicovariance for a BSS process since it includes the cases outside the semimartingale class. To present this idea, we let a bivariate BSS process have the form
$$Y_t^{(i)}=\int_{-\infty}^tg^{(i)}(t-s)\sigma^{(i)}_sdW^{(i)}_s,\ i=1,2.$$
Suppose that we observe this process at a fixed frequency ($\frac{1}{n}$) over a time interval $[0,T]$; then its realised semicovariance can be written as 
$$\sum_{i=1}^{[nt]}p(\Delta_i^n Y^{(1)})p(\Delta_i^n Y^{(2)}),\ \forall t\in[0,T],$$
where $\Delta_i^nY^{(j)}:=Y^{(j)}_{i/n}-Y^{(j)}_{(i-1)/n},\ j=1,2,$ and $p(x)=\max\{x,0\}$. It simplifies to semivariance in the univariate case for identical superscripts. In the realm of in-fill asymptotic properties of realised semivariance/semicovariance, \cite{BN2010} first studied the realised semivariance based on one univariate semimartingale model and have shown that the realised semivariance has important predictive qualities for future market volatility. \cite{JJ2012} have established the in-fill asymptotic theory for a family of functions of increments for a class of It\^o semimartingale. Based on the previous works, \cite{TB2020} extend the semivariances to a multivariate setting, i.e., realised semicovariance. The difficulty of our work comes from two parts: the first one is that the methods in the previously mentioned references are based on semimartingale techniques which are impossible to apply to general BSS models; the second one is that when we extend the univariate case to the multivariate case, the complexity of the question is greatly increased. We will discuss these difficulties in detail in the following sections.\\

There is a series of papers \citep{BN2009,BN2011} that have studied the realised power variation for the univariate BSS process. However, they do not cover the present question because the function of the increments we will work with for the univariate case is $f(x)=x^21_{\{x\ge0\}}$, which is not an even function and makes a significant difference in the underlying theory. There are only very few existing results for the case of multivariate BSS processes. \cite{AA2019} and \cite{RA2019} can be viewed as the starting point of using multivariate BSS processes in stochastic modelling, where they established the asymptotic properties of the realised covariance for multivariate BSS processes. Our work will extend the limit theorems for the realised covariation to a version for the non-linear functionals.\\

This article is structured as follows. In Section 2.1, we first introduce the settings for univariate BSS processes and some general assumptions. The selected tools from Malliavin calculus are reviewed in Section 2.2. We formulate our general assumptions in Section 2.3. The key result, the central limit theorem for the realised semivariance of a univariate BSS process, can be found in Section 2.4. Similar to the univariate case, we introduce the settings for the multivariate case in Section 3.1 and state the key results in Section 3.2. We will discuss our current results and potential future works in Section 4. For ease of exposition, all proofs will be postponed to Section 5.

\section{Univariate case: Realised semivariance of a BSS process}
We start with the univariate case by introducing the basic settings, mathematical tools which will be used, technical assumptions, and general procedure in proving in-fill asymptotic theorems in this section.
\subsection{Setting}
Let $(\Omega,\mathcal{F},\mathcal{F}_t,\mathbb{P})$ denote a complete, filtered probability space. We denote by $[0,T]$ a finite time interval for some $T>0$ and by $\mathcal{B}(\mathbb{R})$ the class of Borel subsets of $\mathbb{R}$. A stochastic process $(X_t)_{t\in\mathbb{R}}$ is $\mathcal{F}_t$-adapted if $X_t\in\mathcal{F}_t$. We first introduce the Brownian measure.\\
\begin{Def}[Brownian measure]
An $\mathcal{F}_t$-adapted Brownian measure $W:\Omega\times\mathcal{B}(\mathbb{R})\rightarrow\mathbb{R}$ is a Gaussian stochastic measure such that, if $A\in\mathcal{B}(\mathbb{R})$ with $Leb(A)<\infty$, then $W(A)\sim N(0,Leb(A))$, where $Leb(\cdot)$ is the Lebesgue measure. Moreover, if $A\subseteq[t,\infty)$, then $W(A)$ is independent of $\mathcal{F}_t$.
\end{Def}
We will use $W$ to denote a Brownian measure in this article. The Gaussian core which is defined below is a Gaussian moving average process and is useful for our purposes.
\begin{Def}[The Gaussian core]\label{gaussiancore} A Gaussian process with stationary increments $G$ is defined as
\begin{equation}
G_t=\int_{-\infty}^tg(t-s)dW_s,
\end{equation}
where $g$ is a square-integrable deterministic function on $\mathbb{R}$ with $g(t)=0$ for $t\le0$, and $W$ is a Brownian measure adapted to $\mathcal{F}_t$. $G$ is called the Gaussian core.\end{Def}
By introducing stochastic volatility to the Gaussian core, we will have a Brownian semistationary process defined below.\\
\begin{Def}[Brownian semistationary process]
Let $\sigma$ be a $\mathcal{F}_t$-adapted c\`adl\`ag process, and assume that the function $g$ is continuously differentiable on $(0,\infty)$, $|g'|$ is non-increasing on $(b,\infty)$ for some $b>0$ and $g'\in L^2((\epsilon,\infty))$ for any $\epsilon>0$. Then we define our Brownian semistationary process
\begin{equation}
X_t=\int_{-\infty}^tg(t-s)\sigma_sdW_s.
\end{equation}
We also require $\int_{-\infty}^tg^2(t-s)\sigma_s^2ds<\infty\ a.s.$ to ensure that $X_t<\infty\ a.s.$ for all $t\ge0$. Moreover, we assume that for any $t>0$,
\begin{equation}\label{smoothness}
F_t=\int_{1}^\infty(g'(s))^2\sigma_{t-s}^2ds<\infty,\ a.s.
\end{equation}
\end{Def}
\begin{rmk}
The condition (\ref{smoothness}) is to control the moments of increments of the BSS process by the moments of increments of the corresponding Gaussian core, see Lemma 1 in \cite{BN2011}.
\end{rmk}
Let $f(x)=x^21_{\{x\ge0\}}$. The normalised upside realised semivariance is defined as
$$V(X,f)_t^n=\frac{1}{n}\sum_{i=1}^{[nt]}f\left(\frac{\Delta_i^nX}{\tau_n}\right)=\frac{1}{n\tau_n^2}\sum_{i=1}^{[nt]}(\Delta_i^nX)^21_{\{\Delta_i^nX\ge0\}},$$
where $\Delta_i^nX=X_{\frac{i}{n}}-X_{\frac{(i-1)}{n}}$, and $\tau_n^2={R}(\frac{1}{n})$ with ${R}(t)=\mathbb{E}[|G_{t+s}-G_s|^2],\ t\ge0$.\\
Similarly, we define 
$$V(G,f)_t^n=\frac{1}{n}\sum_{i=1}^{[nt]}f\left(\frac{\Delta_i^nG}{\tau_n}\right)=\frac{1}{n\tau_n^2}\sum_{i=1}^{[nt]}(\Delta_i^nG)^21_{\{\Delta_i^nG\ge0\}},$$
where $\Delta_i^nG=G_{\frac{i}{n}}-G_{\frac{(i-1)}{n}}.$
\subsection{Wiener-It\^{o} chaos decomposition}
Similar to most articles which studied the in-fill asymptotic properties of BSS processes, our work heavily relies on the Wiener-It\^{o} chaos decomposition, see the monograph by \cite{NP2012}. Here we briefly introduce some basic concepts and results which will be used in this paper.\\
\begin{Def}[Isonormal Gaussian processes] Fix a real separable Hilbert space $\mathcal{H}$, with inner product $\langle\cdot,\cdot\rangle_{\mathcal{H}}$ and norm $\langle\cdot,\cdot\rangle_{\mathcal{H}}^{1/2}=\|\cdot\|_{\mathcal{H}}$. We write $Z=\{Z(h):h\in\mathcal{H}\}$ to indicate an isonormal Gaussian process over $\mathcal{H}$. This means that $Z$ is a centered Gaussian family, defined on probability space $(\Omega,\mathcal{F},P)$ and such that $\mathbb{E}[Z(g)Z(h)]=\langle g,h\rangle_{\mathcal{H}}$ for every $g,h\in\mathcal{H}$.
\end{Def}
\begin{prop}[Proposition 2.1.1 in \cite{NP2012}]\label{isonormalexist}
Given a real separable Hilbert space $\mathcal{H}$, there exists an isonormal Gaussian process over $\mathcal{H}$.
\end{prop}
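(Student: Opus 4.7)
The plan is to construct $Z$ explicitly using a countable orthonormal basis of $\mathcal{H}$ together with an i.i.d.\ sequence of standard Gaussians, then verify the required covariance identity. Since $\mathcal{H}$ is separable, I would first fix an orthonormal basis $\{e_k\}_{k \geq 1} \subseteq \mathcal{H}$ (or a finite basis if $\mathcal{H}$ is finite-dimensional, in which case the construction is trivial). Independently, I would invoke the Kolmogorov extension theorem, or equivalently the countable product of one-dimensional standard Gaussian laws, to obtain a probability space $(\Omega,\mathcal{F},\mathbb{P})$ carrying a sequence $\{\xi_k\}_{k \geq 1}$ of i.i.d.\ $N(0,1)$ random variables.

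Next, for every $h \in \mathcal{H}$ I would set
\begin{equation*}
Z(h) := \sum_{k=1}^{\infty} \langle h, e_k \rangle_{\mathcal{H}}\, \xi_k,
\end{equation*}
the series being interpreted in $L^2(\Omega,\mathcal{F},\mathbb{P})$. Convergence in $L^2$ is automatic from orthogonality of $\{\xi_k\}$ together with Parseval's identity $\sum_k \langle h,e_k\rangle_{\mathcal{H}}^2 = \|h\|_{\mathcal{H}}^2 < \infty$. Each partial sum is a centered Gaussian, and the $L^2$-limit of centered Gaussians is centered Gaussian, so each $Z(h)$ is a centered Gaussian random variable.

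The remaining step is to check that $\{Z(h) : h \in \mathcal{H}\}$ is a centered \emph{Gaussian family}, i.e.\ that any finite linear combination $\sum_j c_j Z(h_j)$ is again Gaussian. By linearity of $h \mapsto Z(h)$ (inherited from linearity of $h\mapsto \langle h,e_k\rangle_{\mathcal{H}}$ and the $L^2$ definition of the sum) such a combination equals $Z(\sum_j c_j h_j)$, which is Gaussian by the previous step. Finally, using orthonormality of $\{e_k\}$ and $\mathbb{E}[\xi_k \xi_\ell] = \delta_{k\ell}$, I would compute
\begin{equation*}
\mathbb{E}[Z(g) Z(h)] = \sum_{k=1}^{\infty} \langle g, e_k \rangle_{\mathcal{H}} \langle h, e_k \rangle_{\mathcal{H}} = \langle g, h \rangle_{\mathcal{H}},
\end{equation*}
which is exactly the defining property of an isonormal process.

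The only non-routine ingredient is the existence of a probability space carrying countably many i.i.d.\ standard Gaussians, which is standard (Kolmogorov extension or infinite product measure); this is really where separability of $\mathcal{H}$ is used, since it reduces the index set of the required Gaussian family to a countable one. Everything else is a routine verification, so I would not expect a substantial obstacle beyond keeping track of $L^2$-convergence when extending from the linear span of $\{e_k\}$ to all of $\mathcal{H}$.
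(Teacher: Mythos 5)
Your construction is correct and is essentially the same argument as the one in Nourdin and Peccati (2012), which the paper simply cites without reproducing: fix a countable orthonormal basis (using separability), realise an i.i.d.\ $N(0,1)$ sequence $\{\xi_k\}$ on a product space, and set $Z(h)=\sum_k\langle h,e_k\rangle_{\mathcal{H}}\xi_k$ in $L^2$. All the verification steps you outline (Parseval for $L^2$-convergence, Gaussianity of $L^2$-limits, linearity giving the joint Gaussian family, and the covariance identity) are exactly the standard ones.
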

According to our settings, we let $\mathcal{H}$ be the Hilbert space generated by the rescaled increments of the Gaussian core:
$$\left(\frac{\Delta_i^nG}{\tau_n}\right)_{n\ge1,1\le i\le [nT]}$$
equipped with the inner product $\langle X,Y\rangle_{\mathcal{H}}=\mathbb{E}[XY]$ for $X,Y\in\mathcal{H}$. By Proposition \ref{isonormalexist}, we have an isonormal process $Z$ over this Hilbert space $\mathcal{H}$. For the rest of this paper, we will assume that $\mathcal{F}$ is the $\sigma$-algebra generated by $Z$. Next, we introduce Hermite polynomials which are an orthogonal basis of $L^2(\mathbb{R},\gamma)$, the space of square-integrable functions on $\mathbb{R}$ with respect to the standard Gaussian measure $\gamma$.

\begin{Def}[Hermite polynomials] Let $p\ge 0$ be an integer. We define the $p$th Hermite polynomial as $H_0=1$ and $H_{p+1}(x)=xH_p(x)-pH_{p-1}(x)$, where we use the convention that $H_{-1}(x)=0$.

\end{Def} 
The Wiener chaos plays a crucial role in $L^2(\Omega,\mathcal{F},P)$, which is analogous to that of the Hermite polynomials for $L^2(\gamma)$, since any element of $L^2(\Omega,\mathcal{F},P)$ has a unique decomposition in terms of the Wiener chaos expansion, which will be explained by the following Wiener-It\^{o} chaos decomposition theorem.
\begin{Def}[Wiener chaos] For each $n\ge0$, we write $\mathcal{H}_n$ to denote the closed linear subspace of $L^2(\Omega,\mathcal{F},P)$ generated by the random variables of type $H_n(X(h)),\ h\in\mathcal{H},\ \|h\|_{\mathcal{H}}=1$. The space $\mathcal{H}_n$ is called the $n$th Wiener chaos of $X$.

\end{Def}

\begin{thm}[Wiener-It\^{o} chaos decomposition, Theorem 2.2.4 in \cite{NP2012}] One has that $L^2(\Omega,\mathcal{F},P)=\bigoplus_{n=1}^\infty \mathcal{H}_n$. This means that every random variable $F\in L^2(\Omega,\mathcal{F},P)$ admits a unique expansion of the type $F=\mathbb{E}[F]+\sum_{n=1}^\infty F_n$, where $F_n\in\mathcal{H}_n$ and the series converges in $L^2(\Omega,\mathcal{F},P)$.

\end{thm}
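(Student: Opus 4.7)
The plan is to deduce the decomposition from two separate facts about the family $\{\mathcal{H}_n\}_{n\ge 0}$: first, pairwise orthogonality of the chaoses inside $L^2(\Omega,\mathcal{F},P)$; second, density of the algebraic sum $\bigoplus_{n\ge 0}\mathcal{H}_n$. Since each $\mathcal{H}_n$ is closed by definition, these two statements combine through a standard Hilbert space argument to give the claimed orthogonal direct-sum decomposition, and uniqueness of the expansion $F=\mathbb{E}[F]+\sum_n F_n$ then follows from orthogonality.

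For orthogonality I would exploit the Hermite generating function $\exp(tx-t^2/2)=\sum_{n\ge 0}\tfrac{t^n}{n!}H_n(x)$. Given $h,g\in\mathcal{H}$ with $\|h\|_{\mathcal{H}}=\|g\|_{\mathcal{H}}=1$, the pair $(Z(h),Z(g))$ is centred Gaussian with covariance $\langle h,g\rangle_{\mathcal{H}}$, and a direct computation gives
\[
\mathbb{E}\bigl[e^{sZ(h)-s^2/2}\,e^{tZ(g)-t^2/2}\bigr]=e^{st\langle h,g\rangle_{\mathcal{H}}}.
\]
Expanding both sides as power series in $(s,t)$ and matching coefficients yields the identity $\mathbb{E}[H_p(Z(h))H_q(Z(g))]=\delta_{p,q}\,p!\,\langle h,g\rangle_{\mathcal{H}}^{p}$, which in particular gives $\mathcal{H}_p\perp\mathcal{H}_q$ for $p\neq q$ and simultaneously determines the inner product structure on each $\mathcal{H}_n$.

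For the density step, I would prove that the linear span $\mathcal{E}$ of the stochastic exponentials $\exp(Z(h)-\tfrac12\|h\|_{\mathcal{H}}^2)$, $h\in\mathcal{H}$, is dense in $L^2(\Omega,\mathcal{F},P)$. The assumption $\mathcal{F}=\sigma(Z)$ lets me first approximate an arbitrary $F\in L^2$ by cylindrical functionals $f(Z(h_1),\dots,Z(h_k))$ with $f\in L^2(\mathbb{R}^k,\gamma_k)$ for the associated $k$-dimensional Gaussian measure $\gamma_k$. Inside $L^2(\mathbb{R}^k,\gamma_k)$, exponentials of linear forms are total by a classical Fourier/Laplace argument (or, equivalently, because the Gaussian measure is determined by its moments, so polynomials are dense and can be re-expressed via Hermite generating functions). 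Substituting $Z(h_j)$ back recovers an element of $\mathcal{E}$.

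The remainder is algebraic bookkeeping: specialising the generating function at $t=\|h\|_{\mathcal{H}}$ and the unit vector $h/\|h\|_{\mathcal{H}}$ gives
\[
\exp\bigl(Z(h)-\tfrac12\|h\|_{\mathcal{H}}^2\bigr)=\sum_{n\ge 0}\frac{\|h\|_{\mathcal{H}}^n}{n!}H_n\!\bigl(Z(h/\|h\|_{\mathcal{H}})\bigr),
\]
whose $n$-th summand sits in $\mathcal{H}_n$; so $\mathcal{E}\subset\overline{\bigoplus_n\mathcal{H}_n}$ and density of $\mathcal{E}$ transfers to the chaos sum. I expect the density step to be the main obstacle, because it is the only place where both the Gaussian character of $Z$ and the hypothesis $\mathcal{F}=\sigma(Z)$ are used in an essential way; everything else reduces to orthogonality of Hermite polynomials under the standard Gaussian measure and to a covariance computation.
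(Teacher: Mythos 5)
The paper offers no proof of this statement—it is quoted verbatim as Theorem 2.2.4 of the cited monograph by Nourdin and Peccati—and your proposal is a correct sketch of exactly the standard argument given there: orthogonality of the chaoses via the Hermite generating function identity $\mathbb{E}[H_p(Z(h))H_q(Z(g))]=\delta_{p,q}\,p!\,\langle h,g\rangle_{\mathcal{H}}^p$, combined with totality of the stochastic exponentials in $L^2(\Omega,\sigma(Z),P)$ and their chaos expansion. No gaps worth flagging beyond the acknowledged sketch-level compression of the density step.
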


Given an integer $q\ge2$, we denote by $\mathcal{H}^{\otimes q}$ and $\mathcal{H}^{\odot q}$, respectively, the $q$th tensor product and the $q$th symmetric tensor product of $\mathcal{H}$.

\begin{Def}[Contractions] Let $g=g_1\otimes\cdots\otimes g_n\in\mathcal{H}^{\otimes n}$ and $h=h_1\otimes\cdots\otimes h_m\in\mathcal{H}^{\otimes m}$. For any $0\le p\le n\land m$, we define the $p$th contraction $g\otimes_p h$ as the element of $\mathcal{H}^{\otimes n+m-2p}:g\otimes_p h:=\langle g_1,h_1\rangle_{\mathcal{H}}\cdots \langle g_p,h_p\rangle_{\mathcal{H}}g_{p+1}\otimes\cdots\otimes g_n\otimes h_{p+1}\otimes\cdots\otimes h_m$. We denote by $g\tilde\otimes_ph$ its symmetrisation. When $n=m=p$, we denote $\langle g,h\rangle_{\mathcal{H}^{\otimes p}}:=g\otimes_p h=\langle g_1,h_1\rangle_{\mathcal{H}}\cdots \langle g_p,h_p\rangle_{\mathcal{H}}$ and $\|\cdot\|_{\mathcal{H}^{\otimes p}}:=\langle\cdot,\cdot\rangle_{\mathcal{H}^{\otimes p}}^{1/2}$.

\end{Def}

Multiple integrals in the Malliavin calculus setting will help us to establish a connection between symmetric tensor products and Wiener chaos. For an integer $p\ge1$, we denote the $p$th multiple integral by $I_p:\ \mathcal{H}^{\odot p}\rightarrow \mathcal{H}_p$ the isometry from the symmetric tensor product $\mathcal{H}^{\odot p}$ to the $p$th Wiener chaos $\mathcal{H}_p$, equipped with the norm $\sqrt{p!}\|\cdot\|_{\mathcal{H}^{\otimes p}}$. 
\begin{thm}[Theorem 2.7.7 in \cite{NP2012}] Let $f\in\mathcal{H}$ be such that $\|f\|_{\mathcal{H}}=1$. Then, for any integer $p\ge1$, we have 
\begin{equation}
H_p(Z(f))=I_p(f^{\otimes p}).
\end{equation}

\end{thm}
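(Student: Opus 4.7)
The plan is induction on $p \geq 1$, using the Hermite three-term recursion $H_{p+1}(x) = x H_p(x) - p H_{p-1}(x)$ on the left-hand side and the product formula for multiple Wiener--It\^o integrals on the right. The base case $p=1$ is immediate from the construction of $I_1\colon \mathcal{H}^{\odot 1} = \mathcal{H} \to \mathcal{H}_1$: since $I_1$ is the canonical isometry onto the first chaos, $I_1(f) = Z(f)$, and $H_1(x) = x$ gives both sides equal to $Z(f)$. I would also record the conventions $I_0 = \mathbb{E}$ and $H_0 = 1$ so that the recursion at the transition from $p=1$ to $p=2$ is well-defined.

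For the induction step, assume the identity holds for all indices $\leq p$. The Hermite recursion combined with the inductive hypothesis gives
\begin{equation*}
H_{p+1}(Z(f)) = Z(f)\, H_p(Z(f)) - p\, H_{p-1}(Z(f)) = I_1(f)\, I_p(f^{\otimes p}) - p\, I_{p-1}(f^{\otimes (p-1)}).
\end{equation*}
The crucial algebraic input is then the $q=1$ specialisation of the product formula for multiple integrals:
\begin{equation*}
I_1(f)\, I_p(f^{\otimes p}) = I_{p+1}(f^{\otimes (p+1)}) + p\, \|f\|_{\mathcal{H}}^{2}\, I_{p-1}(f^{\otimes (p-1)}).
\end{equation*}
Under the normalisation $\|f\|_{\mathcal{H}} = 1$ the correction term on the right cancels exactly against the $-p\, I_{p-1}(f^{\otimes (p-1)})$ term in the previous display, yielding $H_{p+1}(Z(f)) = I_{p+1}(f^{\otimes (p+1)})$ and closing the induction.

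The main obstacle is justifying the product formula cleanly. The route I would take is to verify it first on elementary tensors built from an orthonormal system. Extend $f$ to an orthonormal family $(e_j)$ of $\mathcal{H}$ with $e_1 = f$; then $\{Z(e_j)\}_j$ is an i.i.d.\ sequence of standard Gaussians, and a basic property of the multiple-integral construction is that integrals of symmetric tensors built from the $e_j$'s factor as $\prod_j H_{k_j}(Z(e_j))$. In these coordinates the product formula collapses to the classical one-variable identity $x H_p(x) = H_{p+1}(x) + p H_{p-1}(x)$ evaluated at $x = Z(f)$, and the isometry $\|I_q(h)\|_{L^2(\Omega)} = \sqrt{q!}\,\|h\|_{\mathcal{H}^{\otimes q}}$ extends the identity to general symmetric tensors by density. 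If the contraction bookkeeping looks unwieldy, a cleaner alternative is the generating-function method: establish $\exp(t Z(f) - t^2/2) = \sum_{p \geq 0} (t^p/p!)\, I_p(f^{\otimes p})$ in $L^2(\Omega)$ by projecting both sides onto each Wiener chaos, compare with the classical Hermite generating function $\exp(t x - t^2/2) = \sum_{p \geq 0} (t^p/p!)\, H_p(x)$ at $x = Z(f)$, and read off the theorem by matching coefficients of $t^p$.
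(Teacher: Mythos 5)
The paper does not prove this statement at all: it is quoted verbatim as background (Theorem 2.7.7 of Nourdin--Peccati) and used as a black box, so there is no internal proof to compare against. Your induction via the Hermite three-term recursion and the $q=1$ product formula is the standard textbook argument and is correct as far as the algebra goes: with $\|f\|_{\mathcal{H}}=1$ the contraction term $p\,\|f\|_{\mathcal{H}}^{2}\,I_{p-1}(f^{\otimes(p-1)})$ indeed cancels the $-p\,H_{p-1}(Z(f))$ coming from the recursion, and the base case $I_1(f)=Z(f)$ is immediate. The one point to watch is your justification of the product formula itself: you propose to verify it by extending $f$ to an orthonormal family $(e_j)$ and invoking the fact that multiple integrals of symmetric tensors in the $e_j$ factor as $\prod_j H_{k_j}(Z(e_j))$ --- but that factorisation property is a multivariate generalisation of the very identity you are proving (take all but one $k_j$ equal to zero), so as written that route is circular. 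You need the product formula to be established from the construction of $I_p$ on elementary (off-diagonal simple) tensors, independently of the Hermite correspondence, which is how it is done in Nourdin--Peccati. Your generating-function alternative, matching $\exp(tZ(f)-t^2/2)=\sum_{p}(t^p/p!)\,I_p(f^{\otimes p})$ against the classical Hermite generating function, avoids this issue cleanly and would be my recommendation for a self-contained write-up; it only requires computing $\mathbb{E}\bigl[\exp(tZ(f)-t^2/2)\,I_p(g)\bigr]=t^p\langle f^{\otimes p},g\rangle_{\mathcal{H}^{\otimes p}}$ from the elementary definition of $I_p$.
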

Multiple integrals of different orders are orthogonal.
\begin{prop}[Isometry property of integrals, Proposition 2.7.5 in \cite{NP2012}]
Fix integers $1\le q\le p$, as well as $f\in\mathcal{H}^{\odot p}$ and $g\in\mathcal{H}^{\odot q}$. We have
\begin{equation}
\mathbb{E}[I_p(f)I_q(g)]=\begin{cases}
p!\langle f,g\rangle_{\mathcal{H}^{\otimes p}}, & \text{if }p=q,\\
0, & \text{otherwise.}
\end{cases}
\end{equation}
\end{prop}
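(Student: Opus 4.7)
First I would dispose of the diagonal case $p=q$, which is essentially built into the very definition of the multiple integral. Since $I_p:\mathcal{H}^{\odot p}\to\mathcal{H}_p$ is by hypothesis an isometry when $\mathcal{H}^{\odot p}$ carries the norm $\sqrt{p!}\,\|\cdot\|_{\mathcal{H}^{\otimes p}}$, one has $\mathbb{E}[I_p(f)^2]=p!\,\|f\|_{\mathcal{H}^{\otimes p}}^2$ for every $f\in\mathcal{H}^{\odot p}$. Polarising this quadratic identity through $\langle f,g\rangle=\tfrac{1}{4}(\|f+g\|^2-\|f-g\|^2)$ immediately yields $\mathbb{E}[I_p(f)I_p(g)]=p!\,\langle f,g\rangle_{\mathcal{H}^{\otimes p}}$, which is the first branch of the formula.

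For the off-diagonal case, assume without loss of generality $p>q$. The plan is to reduce to rank-one tensors and then invoke the orthogonality of Hermite polynomials of jointly Gaussian variables. Using the polarisation identity
\[
h_1\odot\cdots\odot h_p=\frac{1}{2^{p}\,p!}\sum_{\epsilon\in\{\pm 1\}^{p}}\epsilon_{1}\cdots\epsilon_{p}\bigl(\epsilon_{1}h_{1}+\cdots+\epsilon_{p}h_{p}\bigr)^{\otimes p},
\]
one sees that the linear span of $\{h^{\otimes p}:h\in\mathcal{H},\,\|h\|_{\mathcal{H}}=1\}$ is dense in $\mathcal{H}^{\odot p}$, and analogously for $\mathcal{H}^{\odot q}$. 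Because $I_p$ and $I_q$ are continuous into $L^{2}(\Omega)$, Cauchy--Schwarz allows me to push the bilinear form $(f,g)\mapsto\mathbb{E}[I_p(f)I_q(g)]$ through these approximations, so it suffices to check that $\mathbb{E}[I_p(h^{\otimes p})I_q(k^{\otimes q})]=0$ for arbitrary unit vectors $h,k\in\mathcal{H}$.

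By the previously stated identity $H_p(Z(f))=I_p(f^{\otimes p})$, this reduces the problem to proving $\mathbb{E}[H_p(Z(h))H_q(Z(k))]=0$ whenever $p\neq q$, where $(Z(h),Z(k))$ is a centred Gaussian pair with unit variances and correlation $\rho=\langle h,k\rangle_{\mathcal{H}}$. I would establish this through the Hermite generating function $e^{tx-t^{2}/2}=\sum_{n\ge 0}\frac{t^{n}}{n!}H_{n}(x)$: multiplying the two series, taking expectations, and evaluating the joint Laplace transform of $(Z(h),Z(k))$ gives
\[
\sum_{p,q\ge 0}\frac{t^{p}s^{q}}{p!\,q!}\,\mathbb{E}[H_p(Z(h))H_q(Z(k))]=\mathbb{E}\bigl[e^{tZ(h)-t^{2}/2}\,e^{sZ(k)-s^{2}/2}\bigr]=e^{ts\rho}=\sum_{n\ge 0}\frac{(ts)^{n}\rho^{n}}{n!}.
\]
Matching the coefficients of $t^{p}s^{q}$ on the two sides kills every off-diagonal term and simultaneously recovers $\mathbb{E}[H_p(Z(h))H_p(Z(k))]=p!\,\rho^{p}$, giving a consistency check with the diagonal case.

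The main technical friction is the density/continuity step: one has to verify that approximating $f\in\mathcal{H}^{\odot p}$ by finite sums of rank-one tensors $h^{\otimes p}$ preserves the vanishing-expectation conclusion in the limit. This is routine once the isometry scaling $\sqrt{p!}\,\|\cdot\|_{\mathcal{H}^{\otimes p}}$ is used together with Cauchy--Schwarz in $L^{2}(\Omega)$, but it needs to be stated with care because the approximating sequences for $f$ and $g$ live in different tensor powers and hence in different scaled Hilbert norms.
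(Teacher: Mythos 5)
The paper does not prove this proposition at all: it is quoted verbatim as background material, with the proof deferred to Proposition 2.7.5 of \cite{NP2012}, so there is no in-paper argument to compare yours against. Your proof is nonetheless correct and is essentially the standard textbook derivation. The diagonal case does follow from polarising the isometry $\mathbb{E}[I_p(f)^2]=p!\,\|f\|^2_{\mathcal{H}^{\otimes p}}$ that is built into the definition of $I_p$; the off-diagonal case correctly reduces, via the symmetric-tensor polarisation identity and the $L^2$-continuity of $(f,g)\mapsto\mathbb{E}[I_p(f)I_q(g)]$ (Cauchy--Schwarz plus the isometry scaling), to the orthogonality relation $\mathbb{E}[H_p(Z(h))H_q(Z(k))]=\delta_{pq}\,p!\,\langle h,k\rangle_{\mathcal{H}}^p$, which your generating-function computation $\mathbb{E}[e^{tZ(h)-t^2/2}e^{sZ(k)-s^2/2}]=e^{ts\rho}$ establishes cleanly. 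One small point worth making explicit in a write-up: a vector of the form $\epsilon_1h_1+\cdots+\epsilon_ph_p$ appearing in the polarisation identity is not a unit vector, but since $h^{\otimes p}=\|h\|^p(h/\|h\|)^{\otimes p}$ the linear span of $\{h^{\otimes p}:\|h\|_{\mathcal{H}}=1\}$ coincides with that of all $p$-th tensor powers, so the reduction to the identity $H_p(Z(f))=I_p(f^{\otimes p})$ (which the paper states only for unit vectors) is legitimate.
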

The following theorem is the so-called fourth moment theorem which provides equivalent conditions for the convergence of a multiple integral to the standard Gaussian distribution.
\begin{thm}[Fourth-moment theorem, Theorem 5.2.7 in \cite{NP2012}] Let $k\ge2$, $f_n\in\mathcal{H}^{\odot k}$ for any $n\in\mathbb{N}$, and suppose that
$$\mathbb{E}[I_k(f_n)^2]=k!\|f_n\|^2_{\mathcal{H}^{\otimes k}}\overset{n\rightarrow\infty}\longrightarrow1.$$
Then the following conditions are equivalent:\\
(a) $I_k(f_n)\overset{\mathcal{L}}\longrightarrow N(0,1)$, $n\rightarrow\infty$,\\
(b) $\mathbb{E}[I_k(f_n)^4]\longrightarrow 3$, $n\rightarrow\infty$,\\
(c) $\|f_n\otimes_r f_n\|^2_{\mathcal{H}^{\otimes 2k-2r}}\longrightarrow0$ for $1\le r \le k-1$, $n\rightarrow\infty$.
\end{thm}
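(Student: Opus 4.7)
The strategy is the classical Malliavin--Stein approach, and I would prove the equivalences cyclically, in the order (b)$\Leftrightarrow$(c), then (c)$\Rightarrow$(a), and finally (a)$\Rightarrow$(b). Two tools drive the argument: the product formula for multiple Wiener--It\^o integrals (an algebraic identity), and the Stein bound for the total variation distance between a chaos element and a standard Gaussian (an analytic estimate). Throughout, the normalisation $k!\|f_n\|_{\mathcal{H}^{\otimes k}}^{2}\to 1$ pins down the limiting variance and lets me pass freely between statements about $f_n$ and about $I_k(f_n)$.

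\emph{Step 1: (b)$\Leftrightarrow$(c).} I would apply the product formula
\begin{equation*}
I_k(f_n)^2 = \sum_{r=0}^{k} r!\binom{k}{r}^2 I_{2k-2r}(f_n\tilde\otimes_r f_n),
\end{equation*}
square, take expectations, and invoke the isometry property together with the orthogonality of distinct chaoses to get an identity of the form
\begin{equation*}
\mathbb{E}[I_k(f_n)^4] - 3\bigl(\mathbb{E}[I_k(f_n)^2]\bigr)^2 = \sum_{r=1}^{k-1} c_{k,r}\,\|f_n\tilde\otimes_r f_n\|_{\mathcal{H}^{\otimes 2k-2r}}^2
\end{equation*}
with explicit positive constants $c_{k,r}$. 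Since $\|f_n\tilde\otimes_r f_n\|$ and $\|f_n\otimes_r f_n\|$ dominate each other up to a combinatorial factor, and since the right-hand side is a sum of non-negative terms, the equivalence (b)$\Leftrightarrow$(c) is immediate.

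\emph{Step 2: (c)$\Rightarrow$(a).} This step is the main obstacle and requires Malliavin calculus tools beyond what the excerpt has introduced. The key inequality is
\begin{equation*}
d_{TV}\bigl(I_k(f_n), N(0,1)\bigr) \le 2\sqrt{\operatorname{Var}\!\left(\tfrac{1}{k}\|DI_k(f_n)\|_{\mathcal{H}}^{2}\right)},
\end{equation*}
which comes from Stein's characterisation of the Gaussian together with the Malliavin integration-by-parts formula $\mathbb{E}[Fg'(F)] = \mathbb{E}\bigl[g(F)\langle DF, -DL^{-1}F\rangle_{\mathcal{H}}\bigr]$ and the identity $-DL^{-1}I_k(f) = \frac{1}{k}DI_k(f)$ on the $k$th chaos. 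Expanding $\|DI_k(f_n)\|_{\mathcal{H}}^2$ into chaoses by means of the product formula applied to each component shows that its variance is a positive linear combination of the squared contraction norms $\|f_n\otimes_r f_n\|_{\mathcal{H}^{\otimes 2k-2r}}^2$ for $1\le r\le k-1$. Hypothesis (c) therefore forces the total variation distance, and a fortiori convergence in law, to zero.

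\emph{Step 3: (a)$\Rightarrow$(b).} This final step relies on Nelson's hypercontractivity for the Ornstein--Uhlenbeck semigroup: on a fixed Wiener chaos $\mathcal{H}_k$ all $L^p$-norms are equivalent, with $\|F\|_{L^4}\le 3^{k/2}\|F\|_{L^2}$. Consequently $\{I_k(f_n)^4\}_n$ is uniformly integrable whenever the second moments $\mathbb{E}[I_k(f_n)^2]$ remain bounded, which is guaranteed by the standing normalisation. Convergence in law from (a) then upgrades to convergence of the fourth moment, and since the standard Gaussian has fourth moment $3$, (b) follows.
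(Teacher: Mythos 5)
The paper does not prove this statement at all: it is imported verbatim as Theorem 5.2.7 of the cited monograph by Nourdin and Peccati, so there is no in-paper argument to compare against. Your proposal is a faithful outline of the standard Malliavin--Stein proof given in that reference (product formula for the moment identity, the total-variation bound via $\operatorname{Var}\bigl(\tfrac1k\|DI_k(f_n)\|_{\mathcal H}^2\bigr)$, and hypercontractivity for the converse), and each of the three steps is sound in substance. The one point you should tighten is in Step 1: it is not true that $\|f_n\tilde\otimes_r f_n\|$ and $\|f_n\otimes_r f_n\|$ dominate each other up to a combinatorial factor for each fixed $r$ --- only the inequality $\|f_n\tilde\otimes_r f_n\|_{\mathcal H^{\otimes 2k-2r}}\le\|f_n\otimes_r f_n\|_{\mathcal H^{\otimes 2k-2r}}$ holds in general, since symmetrisation can cause cancellation. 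Because condition (c) in the statement is phrased with the \emph{unsymmetrised} contractions, deducing (c) from (b) requires the second form of the fourth-moment identity,
\begin{equation*}
\mathbb{E}[I_k(f_n)^4]-3\bigl(k!\|f_n\|_{\mathcal H^{\otimes k}}^2\bigr)^2
=\sum_{r=1}^{k-1}\frac{(k!)^4}{(r!(k-r)!)^2}\Bigl(\|f_n\otimes_r f_n\|_{\mathcal H^{\otimes 2k-2r}}^2+\binom{2k-2r}{k-r}\|f_n\tilde\otimes_r f_n\|_{\mathcal H^{\otimes 2k-2r}}^2\Bigr),
\end{equation*}
in which the unsymmetrised norms appear with strictly positive coefficients; with that substitution your Step 1, and hence the whole cyclic argument, goes through.
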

Based on the condition (c) above and the orthogonality of Wiener chaos, we will have a generalised CLT for a sequence of random variables that admit Wiener chaos decomposition.
\begin{thm}\label{GFMT} Suppose that for any $n\in\mathbb{N}$, we have $f_{k,n}\in\mathcal{H}^{\odot k},k\in\mathbb{N}$. If
\begin{enumerate}[(i)] 
\item  $\lim_{m\rightarrow\infty}\limsup_{n\rightarrow\infty}\sum_{k=m}^\infty k!\|f_{k,n}\|_{\mathcal{H}^{\otimes k}}^2=0$,
\item $k!\|f_{k,n}\|_{\mathcal{H}^{\otimes k}}^2\overset{n\rightarrow\infty}{\longrightarrow}\sigma_k^2$ for any $k\in\mathbb{N}$, so that $\sigma^2:=\sum_{k=1}^\infty\sigma_k^2<\infty$,
\item $\|f_{k,n}\otimes_r f_{k,n}\|_{\mathcal{H}^{\otimes 2k-2r}}^2\overset{n\rightarrow\infty}{\longrightarrow}0$ for any $1\le r \le k-1$ and $k\ge2$,\\
\end{enumerate}
then 
$$\sum_{k=1}^\infty I_k(f_{k,n})\overset{\mathcal{L}}{\longrightarrow}N(0,\sigma^2),\ n\rightarrow\infty.$$
\end{thm}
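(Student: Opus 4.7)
The plan is to reduce the statement to the single-chaos fourth moment theorem via a truncation argument. For a fixed integer $M\ge1$ I would write
$$\sum_{k=1}^\infty I_k(f_{k,n})= S_n^M+R_n^M,\qquad S_n^M:=\sum_{k=1}^M I_k(f_{k,n}),\quad R_n^M:=\sum_{k=M+1}^\infty I_k(f_{k,n}),$$
and proceed in three stages: first establish marginal CLTs for each $I_k(f_{k,n})$, then upgrade these to a joint CLT yielding the limit of $S_n^M$, and finally dispose of the tail $R_n^M$ using hypothesis (i).

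For the marginal step, fix $k\ge 2$. If $\sigma_k>0$, rescaling $f_{k,n}$ by $(k!\|f_{k,n}\|_{\mathcal{H}^{\otimes k}}^2)^{-1/2}$ reduces us to the unit-variance setting in which hypothesis (iii) is exactly condition (c) of the fourth moment theorem, yielding $I_k(f_{k,n})\overset{\mathcal{L}}{\longrightarrow}N(0,\sigma_k^2)$ after rescaling back via (ii). If $\sigma_k=0$, the isometry property gives $\mathbb{E}[I_k(f_{k,n})^2]=k!\|f_{k,n}\|_{\mathcal{H}^{\otimes k}}^2\to 0$, so $I_k(f_{k,n})\to 0$ in $L^2$, hence in law. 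The case $k=1$ is immediate, since $I_1(f_{1,n})$ is already centred Gaussian with variance tending to $\sigma_1^2$. To pass from these marginal CLTs to joint convergence of the finite vector $(I_1(f_{1,n}),\ldots,I_M(f_{M,n}))$ to a Gaussian vector with independent components $N(0,\sigma_k^2)$, I would invoke the Peccati--Tudor multivariate fourth moment theorem (Theorem 6.2.3 in \cite{NP2012}): componentwise convergence to Gaussian limits plus convergence of the covariance matrix suffices, and here the off-diagonal entries vanish identically by orthogonality of multiple integrals of distinct orders. Continuous mapping then delivers $S_n^M\overset{\mathcal{L}}{\longrightarrow}N(0,\sigma_M^2)$ with $\sigma_M^2:=\sum_{k=1}^M\sigma_k^2$, and $\sigma_M^2\to\sigma^2$ as $M\to\infty$ by (ii) and the assumed summability.

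For the tail, orthogonality of the Wiener chaoses gives
$$\mathbb{E}\bigl[(R_n^M)^2\bigr]=\sum_{k=M+1}^\infty k!\,\|f_{k,n}\|_{\mathcal{H}^{\otimes k}}^2,$$
so that hypothesis (i) is exactly $\lim_{M\to\infty}\limsup_{n\to\infty}\mathbb{E}[(R_n^M)^2]=0$, which by Markov implies $\lim_{M\to\infty}\limsup_{n\to\infty}\mathbb{P}(|R_n^M|>\varepsilon)=0$ for every $\varepsilon>0$. Combining the three ingredients --- $S_n^M\overset{\mathcal{L}}{\longrightarrow}N(0,\sigma_M^2)$ as $n\to\infty$ for each fixed $M$, $N(0,\sigma_M^2)\overset{\mathcal{L}}{\longrightarrow}N(0,\sigma^2)$ as $M\to\infty$, and the uniform smallness of $R_n^M$ just established --- the standard approximation lemma (Theorem 3.2 in Billingsley's \emph{Convergence of Probability Measures}) yields $\sum_{k\ge1}I_k(f_{k,n})\overset{\mathcal{L}}{\longrightarrow}N(0,\sigma^2)$.

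I expect the only real obstacle to be the second stage of the marginal step: one must invoke a multivariate fourth moment theorem to promote the individual CLTs for integrals of \emph{different} orders to a joint CLT for the truncated vector, and check that (ii) together with (iii) furnishes exactly the hypotheses of that theorem. The truncation and tail estimate are then soft consequences of (i) and the orthogonality of the chaos decomposition.
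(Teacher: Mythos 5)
Your proof is correct and complete. Note, however, that the paper itself offers no proof of Theorem \ref{GFMT}: it is stated as a known consequence of the fourth moment theorem and the orthogonality of the Wiener chaoses (it is the univariate counterpart of Theorem \ref{MGFMT} and a standard result in the Breuer--Major literature, cf.\ \cite{BN2009}), so there is no argument in the paper to compare against. Your truncation scheme --- marginal fourth-moment CLTs in each fixed chaos after the rescaling you describe, with the degenerate case $\sigma_k=0$ handled by the $L^2$ bound; the Peccati--Tudor theorem (Theorem 6.2.3 of \cite{NP2012}) to upgrade to joint convergence of the truncated vector, the off-diagonal covariances vanishing identically by orthogonality of multiple integrals of distinct orders; the identity $\mathbb{E}[(R_n^M)^2]=\sum_{k>M}k!\,\|f_{k,n}\|_{\mathcal{H}^{\otimes k}}^2$ matching hypothesis (i) exactly; and the converging-together lemma --- is precisely the standard argument that justifies the paper's assertion, and each step is valid as you state it. Two small points are worth making explicit: the statement implicitly assumes $\sum_k k!\,\|f_{k,n}\|_{\mathcal{H}^{\otimes k}}^2<\infty$ for each $n$, so that the series defining the random variable converges in $L^2$ and the tail identity is meaningful; and in the Peccati--Tudor step the limiting covariance matrix $\mathrm{diag}(\sigma_1^2,\ldots,\sigma_M^2)$ may be singular when some $\sigma_k=0$, which that theorem tolerates since it only requires nonnegative definiteness.
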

And for multivariate case, we have the following theorem.
\begin{thm}[Multivariate central limit theorem, Theorem 5 in \cite{BN2009}]\label{MGFMT} Let $d\ge2$ and $F_k$ be a $d$-dimensional random vector $F_n=(Y_n^{(1)},...,Y_n^{(d)})^T$ for any $n\in\mathbb{N}$, where the superscript $T$ denotes the transpose of a vector. Assume that $F_n$ has a chaos representation
$$F_n^{(i)}=\sum_{m=1}^\infty I_m(f_{m,n}^{(i)}),\ \ \ i=1,...,d,$$
with $f_{m,n}^{(i)}\in\mathcal{H}^{\odot m}$. Suppose that the following conditions hold:
\begin{enumerate}[(i)]
\item For any $i=1,...,d$ we have $\lim_{N\rightarrow\infty}\limsup_{n\rightarrow\infty}\sum_{m=N}^\infty m!\|f_{m,n}^{(i)}\|_{\mathcal{H}^{\otimes m}}^2=0$.
\item For any $m\ge1$, $i,j=1,...,d$ we have constants $\Sigma_{ij}^m$ such that 
$$\lim_{n\rightarrow\infty}\mathbb{E}\left[I_m(f_{m,n}^{(i)})I_m(f_{m,n}^{(j)})\right]=\lim_{n\rightarrow\infty}\langle f_{m,n}^{(i)},f_{m,n}^{(j)}\rangle_{\mathcal{H}^{\odot m}}=\Sigma_{ij}^m,$$
and the matrix $\Sigma^m=(\Sigma_{ij}^m)_{1\le i,j\le d}$ is positive definite for all $m$.
\item $\sum_{m=1}^\infty \Sigma^m=\Sigma\in\mathbb{R}^{d\times d}$.
\item For any $m\ge1,i=1,...,d$ and $p=1,...,m-1$
$$\lim_{n\rightarrow\infty}\|f_{m,n}^{(i)}\otimes_p f_{m,n}^{(i)}\|_{\mathcal{H}^{\otimes 2(m-p)}}^2=0.$$
\end{enumerate}
Then we have $$F_n\overset{\mathcal{L}}\longrightarrow N_d(0,\Sigma),\ n\rightarrow\infty.$$
\end{thm}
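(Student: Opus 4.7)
\emph{Proof proposal.} My plan is to use the Cramér-Wold device to reduce the multivariate convergence to a univariate statement, and then to apply Theorem \ref{GFMT} to each linear combination. Fix $\lambda=(\lambda_1,\dots,\lambda_d)^T\in\mathbb{R}^d$. By the linearity of the multiple integrals $I_m$, the linear combination
$$\lambda^T F_n=\sum_{i=1}^d\lambda_i\sum_{m=1}^\infty I_m(f_{m,n}^{(i)})=\sum_{m=1}^\infty I_m(\tilde{f}_{m,n}),\qquad\tilde{f}_{m,n}:=\sum_{i=1}^d\lambda_i f_{m,n}^{(i)}\in\mathcal{H}^{\odot m},$$
is again an element of $L^2(\Omega)$ with an explicit chaos decomposition. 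Since convergence in law of all such linear combinations to $N(0,\lambda^T\Sigma\lambda)$ is equivalent, by Cramér-Wold, to $F_n\overset{\mathcal{L}}\to N_d(0,\Sigma)$, it suffices to verify the three hypotheses of Theorem \ref{GFMT} for the sequence $(\tilde{f}_{m,n})_{m,n}$ with limiting variance $\sigma^2=\lambda^T\Sigma\lambda$.

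Conditions (i) and (ii) of Theorem \ref{GFMT} follow essentially by bilinearity together with (i)--(iii) of the multivariate theorem. For condition (ii), the isometry property gives
$$m!\|\tilde{f}_{m,n}\|^2_{\mathcal{H}^{\otimes m}}=\sum_{i,j=1}^d\lambda_i\lambda_j\,\mathbb{E}\bigl[I_m(f_{m,n}^{(i)})I_m(f_{m,n}^{(j)})\bigr]\longrightarrow\sum_{i,j=1}^d\lambda_i\lambda_j\Sigma_{ij}^m=\lambda^T\Sigma^m\lambda,$$
and summing these limits in $m$ yields $\lambda^T\Sigma\lambda$ by (iii), with the tail bound (i) of Theorem \ref{GFMT} obtained by applying the triangle inequality in $L^2(\Omega)$ to $\sum_{i=1}^d\lambda_i I_m(f_{m,n}^{(i)})$ and using assumption (i) for each coordinate separately.

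The main obstacle is the verification of the contraction condition (iii) of Theorem \ref{GFMT}, namely $\|\tilde{f}_{m,n}\otimes_p\tilde{f}_{m,n}\|_{\mathcal{H}^{\otimes 2(m-p)}}^2\to 0$ for $1\le p\le m-1$. Expanding by bilinearity,
$$\|\tilde{f}_{m,n}\otimes_p\tilde{f}_{m,n}\|_{\mathcal{H}^{\otimes 2(m-p)}}\le\sum_{i,j=1}^d|\lambda_i\lambda_j|\,\|f_{m,n}^{(i)}\otimes_p f_{m,n}^{(j)}\|_{\mathcal{H}^{\otimes 2(m-p)}},$$
so the diagonal terms $i=j$ vanish by hypothesis (iv). For the cross terms $i\neq j$, I would invoke the elementary inequality (which follows from writing the contraction norm as an iterated Hilbert-space inner product and applying Cauchy-Schwarz in each unpaired coordinate)
$$\|f_{m,n}^{(i)}\otimes_p f_{m,n}^{(j)}\|_{\mathcal{H}^{\otimes 2(m-p)}}\le\|f_{m,n}^{(i)}\otimes_p f_{m,n}^{(i)}\|^{1/2}_{\mathcal{H}^{\otimes 2(m-p)}}\|f_{m,n}^{(j)}\otimes_p f_{m,n}^{(j)}\|^{1/2}_{\mathcal{H}^{\otimes 2(m-p)}},$$
which, combined with (iv), forces each cross term to zero as well. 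With all three hypotheses of Theorem \ref{GFMT} verified, we conclude $\lambda^T F_n\overset{\mathcal{L}}\to N(0,\lambda^T\Sigma\lambda)$, and Cramér-Wold delivers the multivariate conclusion. The technical care lies in keeping the bookkeeping between the different indices $i,j,m,p$ consistent and invoking the correct Cauchy-Schwarz bound for contractions of distinct kernels.
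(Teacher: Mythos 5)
The paper itself offers no proof of Theorem \ref{MGFMT}; it is imported verbatim as Theorem 5 of \cite{BN2009}, so there is no in-paper argument to compare against. Your Cram\'er--Wold reduction to Theorem \ref{GFMT} is a legitimate, self-contained derivation and all three hypotheses of Theorem \ref{GFMT} are verified correctly: the tail bound and the variance limit $\lambda^T\Sigma^m\lambda$ follow by bilinearity exactly as you say. The one step worth a comment is the cross-contraction bound. The clean route is the identity $\|f\otimes_p g\|_{\mathcal{H}^{\otimes 2(m-p)}}^2=\langle f\otimes_{m-p}f,\,g\otimes_{m-p}g\rangle_{\mathcal{H}^{\otimes 2p}}$ followed by Cauchy--Schwarz, which produces contractions of order $m-p$ (not $p$) on the right-hand side; your inequality as written is nevertheless equivalent, since taking $g=f$ in the same identity gives $\|f\otimes_p f\|=\|f\otimes_{m-p}f\|$ for symmetric $f$, and in any case hypothesis (iv) annihilates contractions of every order $1,\dots,m-1$, so the cross terms do vanish. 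An alternative, which is how \cite{BN2009} actually argue, is to bypass Cram\'er--Wold via the Peccati--Tudor multivariate fourth moment theorem: for vectors of multiple integrals, componentwise Gaussian convergence together with convergence of the covariance matrix already implies joint Gaussian convergence, so no estimate on cross contractions $f_{m,n}^{(i)}\otimes_p f_{m,n}^{(j)}$ with $i\neq j$ is needed at all. Both routes are sound; yours trades that black box for one explicit Cauchy--Schwarz step.
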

\subsection{Technical assumptions}
The aim of this paper is to derive an asymptotic theory for the realised semivariance. To this end, we need to make some technical assumptions. Generally, the assumptions for the central limit theorem (CLT) should be stronger than those for the weak law of large numbers (WLLN) and the proof of the CLT will be more difficult. For the sake of brevity, we will only focus on the CLT in this article and will explain which assumptions can be relaxed in order to establish the WLLN. Here we begin with some general assumptions.\\
\begin{asm}\label{slowlyvarying}
There exist slowly varying (at $0$) functions $L_0(t)$ and $L_2(t)$ which are continuous on $(0,\infty)$ such that
\begin{equation}
{R}(t)=t^{2\alpha+1}L_0(t),
\end{equation}
and
\begin{equation}
{R}''(t)=t^{2\alpha-1}L_2(t),
\end{equation}
where $\alpha\in(-\frac{1}{2},\frac{1}{2})\setminus\{0\}$. Furthermore, there exist $b\in(0,1)$ such that
\begin{equation}
\limsup_{x\rightarrow0^+}\sup_{y\in[x,x^b]}\left|\frac{L_2(y)}{L_0(x)}\right|<\infty.
\end{equation}
\end{asm}
The above assumption helps us control the correlations $r_n(j):=\mathbb{E}\left[\frac{\Delta_1^nG}{\tau_n}\frac{\Delta_{1+j}^nG}{\tau_n}\right]$ and provides us with a dominant sequence as shown in the following lemma.\\
\begin{lem}[Theorem 4.1 in \cite{AA2019}]
For any $\epsilon>0$ with $\epsilon<1-2\alpha$, we can define
$$r(j)=(j-1)^{2\alpha+\epsilon-1},j\ge2,\ r(0)=r(1)=1.$$
Under the above assumption, there exists a natural number $n_0(\epsilon)$ such that 
\begin{equation}
|r_n(j)|\le Cr(j),\ j\ge0,
\end{equation}
for all $n\ge n_0(\epsilon)$. Then we can have a dominant sequence of $r_n(j)$ by letting $\bar{r}(j)=Cr(j)$.\\
Moreover, define $\rho_\alpha(j)=\frac{1}{2}\left((j-1)^{2\alpha+1}-2j^{2\alpha+1}+(j+1)^{2\alpha+1}\right)$ for $j\ge1$ and $\rho_\alpha(0)=1$, it holds that
\begin{equation}
r_n(j)\rightarrow\rho_\alpha(j),\ n\rightarrow\infty.
\end{equation}
\end{lem}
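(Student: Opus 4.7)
My plan is to reduce the lemma to a second-order difference analysis of the variogram $R$. Since $G$ is a centred stationary Gaussian process with $\mathrm{Var}(G_t-G_s)=R(|t-s|)$, polarisation and stationarity yield for every $j\ge 1$
\begin{equation*}
r_n(j) \;=\; \frac{R((j+1)/n)+R((j-1)/n)-2R(j/n)}{2\,R(1/n)},
\end{equation*}
and $r_n(0)=1$ is immediate. Both the limit and the domination bound therefore come from this single identity.

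For the convergence $r_n(j)\to\rho_\alpha(j)$ at fixed $j$, I would substitute $R(t)=t^{2\alpha+1}L_0(t)$ and factor out $n^{-(2\alpha+1)}$ from the numerator and denominator. The three resulting slowly varying ratios $L_0(c/n)/L_0(1/n)$ with $c\in\{j-1,j,j+1\}$ all tend to $1$ by Karamata's uniform convergence theorem, and the remaining algebraic expression is exactly $\tfrac12[(j-1)^{2\alpha+1}+(j+1)^{2\alpha+1}-2j^{2\alpha+1}]=\rho_\alpha(j)$.

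For the domination bound, write the second difference as a double integral of $R''$:
\begin{equation*}
R((j+1)/n)-2R(j/n)+R((j-1)/n)\;=\;\int_0^{1/n}\!\!\int_0^{1/n}R''\!\left(\tfrac{j-1}{n}+u+v\right)du\,dv.
\end{equation*}
Using $R''(t)=t^{2\alpha-1}L_2(t)$ and the fact that $2\alpha-1<0$ makes $t\mapsto t^{2\alpha-1}$ decreasing on $(0,\infty)$, I bound the integrand by $((j-1)/n)^{2\alpha-1}\sup_\xi|L_2(\xi)|$ with $\xi\in[(j-1)/n,(j+1)/n]$. Dividing by $2R(1/n)=2n^{-(2\alpha+1)}L_0(1/n)$ gives, for $j\ge 2$,
\begin{equation*}
|r_n(j)|\;\le\;(j-1)^{2\alpha-1}\,\frac{\sup_{\xi\in[(j-1)/n,(j+1)/n]}|L_2(\xi)|}{2\,L_0(1/n)}.
\end{equation*}
The cases $j=0,1$ are handled separately: $r_n(0)=1$ trivially, and $r_n(1)$ is bounded for large $n$ by its convergence to the finite limit $2^{2\alpha}-1$.

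The main obstacle is controlling the ratio $\sup_\xi|L_2(\xi)|/L_0(1/n)$ uniformly in $j$ and $n\ge n_0(\epsilon)$, while paying only the extra factor $(j-1)^\epsilon$ that upgrades the exponent from $2\alpha-1$ to $2\alpha+\epsilon-1$. The natural split is dictated by the parameter $b$ in Assumption~\ref{slowlyvarying}: when $(j+1)/n\le(1/n)^b$, i.e.\ $j+1\le n^{1-b}$, the interval $[(j-1)/n,(j+1)/n]$ sits inside $[1/n,(1/n)^b]$, so the third condition in Assumption~\ref{slowlyvarying} directly provides a uniform constant bound on the ratio, which is trivially absorbed into $C(j-1)^\epsilon$. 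In the complementary regime $j+1>n^{1-b}$ the point $\xi$ is no longer small on the appropriate scale relative to $1/n$; here one must appeal to Potter-type bounds for slowly varying functions, and combine them with the fact that $(j-1)/n$ is then comparable to a positive power of $n$, to absorb the resulting polynomial loss into $(j-1)^\epsilon$. Once this bookkeeping is in place the estimate $|r_n(j)|\le Cr(j)$ follows for all $j\ge 0$, and setting $\bar r(j)=Cr(j)$ yields the announced dominant sequence.
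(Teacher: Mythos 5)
The paper itself offers no proof of this lemma --- it is imported verbatim as Theorem~4.1 of \cite{AA2019} --- so there is no in-paper argument to compare against; I can only assess your attempt on its merits. Most of it is sound and follows the standard route: the exact identity $r_n(j)=\bigl(R(\tfrac{j+1}{n})-2R(\tfrac{j}{n})+R(\tfrac{j-1}{n})\bigr)/\bigl(2R(\tfrac{1}{n})\bigr)$ is correct (it is the same identity the paper uses later in the proof of Theorem~\ref{multiclt}); the pointwise limit $r_n(j)\to\rho_\alpha(j)$ follows as you say from Karamata's uniform convergence theorem after factoring out $n^{-(2\alpha+1)}$; and the double-integral representation of the second difference, the monotonicity of $t^{2\alpha-1}$, and the third condition of Assumption~\ref{slowlyvarying} applied at $x=1/n$ correctly give $|r_n(j)|\le C(j-1)^{2\alpha-1}\le Cr(j)$ whenever $j+1\le n^{1-b}$. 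The treatment of $j=0,1$ is also fine (indeed $|r_n(j)|\le1$ always by Cauchy--Schwarz).

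The genuine gap is the complementary regime $j+1>n^{1-b}$, which you correctly single out as the main obstacle and then dispose of with an unsubstantiated appeal to ``Potter-type bounds''. That is exactly the part of the proof that carries the content, and the tool you name does not obviously close it: Potter bounds compare one slowly varying function with itself at two scales, whereas here you must control $\sup_{\xi\in[(j-1)/n,(j+1)/n]}|L_2(\xi)|/L_0(1/n)$ with $\xi$ possibly of order one, i.e.\ a comparison between two \emph{different} functions at widely separated scales, while the hypothesis only bounds $|L_2(y)/L_0(x)|$ for $y\in[x,x^b]$. Closing this requires an actual argument --- for instance chaining the hypothesis across the scales $x=n^{-b^k}$, invoking continuity of $L_2$ on compact sets bounded away from the origin, and using that $1/L_0(1/n)=o(n^{\delta})$ for every $\delta>0$ so that the accumulated loss can be absorbed into the factor $(j-1)^{\epsilon}\gtrsim n^{(1-b)\epsilon}$ available in this regime; none of this is carried out or even sketched concretely. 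As written, the domination $|r_n(j)|\le Cr(j)$ is only established for $j\lesssim n^{1-b}$, so the summable dominant sequence needed downstream for the Breuer--Major argument is not yet in hand.
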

The next assumption will help us to have a unique limit when our functional converges. See \citet[Remark 6]{BN2011} for details.
\begin{asm}\label{kernelmeasure} Suppose that
$$\lim_{n\rightarrow\infty}\pi^n((\epsilon,\infty))=0,\ \forall\epsilon>0,$$
where $$\pi^n(A)=\frac{\int_A(g(x-1/n)-g(x))^2dx}{\int_0^\infty(g(x-1/n)-g(x))^2dx},\ A\in\mathcal{B}(\mathbb{R}).$$
\end{asm}

\subsection{The central limit theorem for the realised semivariance}
In order to set up a central limit theorem for the realised semivariance of BSS processes, it is crucial to prove the CLT for the realised semivariance of the Gaussian core first. Hence, we will adopt the following assumption.\\
\begin{asm}\label{lesszero}
Given Assumption \ref{slowlyvarying}, the parameter $\alpha$ satisfies $\alpha\in(-\frac{1}{2},0)$.
\end{asm}
\begin{rmk}
The above assumption guarantees the summability of the dominant sequence $\bar{r}(j)$, i.e.,
\begin{equation}
\sum_{j=1}^\infty r_n(j)\le \sum_{j=1}^\infty\bar{r}(j)<\infty,\ \forall n\ge1.
\end{equation}
\end{rmk}
We will use $\mathcal{D}([0,T])$ to denote the space which is the set of all c\`adl\`ag functions from $[0,T]$ to $\mathbb{R}$ and use the notation $‘\overset{\mathcal{L}}{\rightarrow}${’},$\ ‘\overset{st.}{\rightarrow}${’} for weak convergence and stable convergence, respectively. Recall that a sequence of stochastic process $\{X^{(n)}\}$ converges weakly to $X$, then $X^{(n)}\overset{st.}{\rightarrow}X$ if and only if for any bounded Borel function $f$ and any $\mathcal{F}$-measurable fixed variable $Z$, $\lim_{n\rightarrow\infty}\mathbb{E}[f(X^{(n)})Z]=\mathbb{E}[f(X)Z]$.\\
Given that the assumptions presented before hold, the CLT for the realised semivariance of the Gaussian core will be a special case of Theorem 1.1 in \cite{ID2018}.\\

\begin{thm}\label{weakGcore} If Assumption \ref{slowlyvarying}, \ref{kernelmeasure} and \ref{lesszero} hold, then we have
$$\sqrt{n}\left(V(G,f)_t^n-\frac{1}{2}t\right)\overset{\mathcal{L}}{\rightarrow}\sqrt\beta B_t,\ n\rightarrow\infty,$$
where $\beta:=\sum_{k=1}^\infty k!a_k^2(1+2\sum_{i=1}^\infty\rho_\alpha(i)^k)<\infty$, $a_k$ is the k-th coefficient of the Hermite expansion of function $f$, and $B$ is a Brownian motion. The weak convergence holds in $\mathcal{D}([0,T])$ equipped with uniform topology.
\end{thm}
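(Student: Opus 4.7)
The plan is to recognise the result as a quantitative Breuer-Major-type statement and verify the hypotheses of Theorem~1.1 in \cite{ID2018}, while making the underlying chaos calculus explicit via Theorem~\ref{MGFMT}. Since $f(x) = x^2 \mathbf{1}_{\{x \ge 0\}} \in L^2(\gamma)$, I would first expand $f(x) = \tfrac{1}{2} + \sum_{k=1}^{\infty} a_k H_k(x)$ in the Hermite basis, noting $a_0 = \tfrac12 \mathbb{E}[Z^2] = \tfrac12$ by symmetry of $Z^2$ and $\sum_{k\ge 1} k! a_k^2 < \infty$. Writing $e_i^n := \Delta_i^n G/\tau_n \in \mathcal{H}$, which has unit $\mathcal{H}$-norm, Theorem~2.7.7 gives $H_k(Z(e_i^n)) = I_k((e_i^n)^{\otimes k})$, so the centred statistic admits the chaos representation
$$\sqrt{n}\bigl(V(G,f)_t^n - \tfrac12 t\bigr) \;=\; \sum_{k=1}^{\infty} I_k\bigl(f_{k,n}(t)\bigr) + o(1), \qquad f_{k,n}(t) := \frac{a_k}{\sqrt n} \sum_{i=1}^{[nt]} (e_i^n)^{\otimes k},$$
where the $o(1)$ absorbs the harmless boundary error $\tfrac{\sqrt n}{2}(t - [nt]/n) = O(n^{-1/2})$.

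Second, I would establish finite-dimensional convergence by checking the four conditions of Theorem~\ref{MGFMT}. The norm calculation reads
$$k!\|f_{k,n}(t)\|_{\mathcal{H}^{\otimes k}}^{2} = \frac{a_k^2\, k!}{n} \sum_{i,j=1}^{[nt]} r_n(j-i)^k \;\longrightarrow\; a_k^2\, k!\, t\Bigl(1 + 2\sum_{m=1}^{\infty}\rho_\alpha(m)^k\Bigr) =: \sigma_k^2(t),$$
using the domination $|r_n(j)| \le \bar r(j)$ and the pointwise limit $r_n(j) \to \rho_\alpha(j)$ from the lemma following Assumption~\ref{slowlyvarying}, together with dominated convergence. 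Under Assumption~\ref{lesszero} we have $2\alpha + \epsilon - 1 < -1$ for small $\epsilon$, so $\sum_j \bar r(j)^k < \infty$ for every $k \ge 1$, which gives the uniform tail control $\sum_{k\ge N} k!\|f_{k,n}(t)\|^2 \le C t \sum_{k\ge N} k! a_k^2 \to 0$ required by hypothesis (i). Summing $\sigma_k^2(t)$ over $k$ produces precisely $\beta t$, matching the covariance of $\sqrt\beta B_t$; the same computation with $[ns] \wedge [nt]$ in place of $[nt]$ yields the Brownian covariance structure $\beta(s \wedge t)$ for joint finite-dimensional limits.

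Third, the contraction condition (iv) is where the real work sits. For $k \ge 2$ and $1 \le r \le k-1$,
$$\|f_{k,n}(t)\otimes_r f_{k,n}(t)\|_{\mathcal{H}^{\otimes 2k-2r}}^{2} = \frac{a_k^4}{n^2} \sum_{i_1,i_2,j_1,j_2=1}^{[nt]} r_n(i_1-j_1)^r\, r_n(i_2-j_2)^r\, r_n(i_1-i_2)^{k-r}\, r_n(j_1-j_2)^{k-r},$$
and after bounding each factor by $\bar r$ one reorganises the sum over three lag variables, keeping one free summation of size $[nt] \le nT$; the surviving $1/n$ factor combined with $\sum_j \bar r(j)^q < \infty$ forces the expression to zero. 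This is the main obstacle, because the correlations $r_n(j)$ depend on $n$, the case $k=2$, $r=1$ requires the strongest summability of $\bar r^{k-r} = \bar r$, and this is exactly where Assumption~\ref{lesszero} ($\alpha < 0$) becomes indispensable.

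Finally, I would upgrade finite-dimensional convergence to functional convergence in $\mathcal{D}([0,T])$ with the uniform topology. Since the limit $\sqrt\beta B$ is continuous and the prelimit is piecewise constant with jumps of order $n^{-1/2}$, it is enough to establish tightness through an increment moment bound of Kolmogorov-Chentsov type, namely $\mathbb{E}\bigl|\sqrt n (V(G,f)_t^n - V(G,f)_s^n) - \tfrac{\sqrt n}{2}(t-s)\bigr|^{2p} \le C(t-s)^{p}$ for some $p > 1$. By hypercontractivity on each Wiener chaos this reduces to the $L^2$ estimates from the norm computation above applied to $f_{k,n}(t) - f_{k,n}(s)$, with the $k$-series tail again controlled uniformly by $\sum_k k! a_k^2 < \infty$. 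Assembling the four conditions places the setting squarely inside the framework of Theorem~1.1 of \cite{ID2018}, from which the conclusion follows.
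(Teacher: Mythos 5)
Your proposal follows essentially the same route as the paper: the paper's entire proof consists of observing that the statement is a special case of Theorem 1.1 of \cite{ID2018} applied to $\varphi(x)=x^2 1_{\{x\ge 0\}}-\tfrac12$, and the remark immediately after the theorem records exactly the expanded version you give (finite-dimensional convergence via the fourth-moment machinery, tightness via the approach of \cite{ID2018}). Your explicit chaos computations --- the Hermite expansion with $a_0=\tfrac12$, the limit of $k!\|f_{k,n}(t)\|_{\mathcal{H}^{\otimes k}}^2$ via the domination $|r_n(j)|\le\bar r(j)$ together with $r_n(j)\to\rho_\alpha(j)$, and the vanishing of the contractions under the $\ell^1$-summability of $\bar r$ guaranteed by Assumption \ref{lesszero} --- are all correct and consistent with what the cited theorem requires for a Hermite-rank-one subordination.

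One caveat concerns the only step you argue rather than cite: tightness does not follow from ``hypercontractivity on each Wiener chaos'' together with $\sum_k k!a_k^2<\infty$. Hypercontractivity gives $\|I_k(g)\|_{L^{2p}}\le (2p-1)^{k/2}\|I_k(g)\|_{L^2}$, so summing over the chaoses of $\varphi$ would require $\sum_k (2p-1)^{k/2}\sqrt{k!}\,|a_k|\,(\cdots)<\infty$, which is strictly stronger than square-summability of the Hermite coefficients; this is precisely the obstruction that \cite{ID2018} was written to circumvent (their moment bound for $\varphi\in L^p(\gamma)$, $p>2$, is obtained by Malliavin integration by parts and Meyer-type inequalities, not chaos-by-chaos hypercontractivity). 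Relatedly, the increment bound should be stated in terms of $([nt]-[ns])/n$ rather than $t-s$: for $t-s<1/n$ a single summand already contributes order $n^{-p}$ to the $2p$-th moment, so the bound $C(t-s)^p$ cannot hold uniformly. Neither point is fatal here --- $f\in L^p(\gamma)$ for every $p$, so Theorem 1.1 of \cite{ID2018} applies exactly as you conclude --- but the self-contained tightness argument as written would not close.
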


\begin{rmk}
We can also prove the above theorem by the usual procedure: 1. Prove the convergence of finite-dimensional distributions by the Fourth Moment Theorem; 2. Prove the tightness by the approach of \cite{ID2018}.
\end{rmk}

So far, the viewpoint of our work is quite similar to the existing works of (multi-)power variation for BSS processes in \cite{BN2009} and \cite{BN2011}. In these articles, the CLT has been established for $V(G,H)$ where $G$ is the Gaussian core as we defined in Definition \ref{gaussiancore}, and function $H(x):=|x|^p-\mu_p$ where $\mu_p=\mathbb{E}[|Z|^p],\ Z\sim N(0,1)$. The main difference between our work and theirs comes from the Hermite rank for different functions. Notice that the function $\varphi(x):=f(x)-\frac{1}{2}=x^21_{\{x\ge0\}}-\frac{1}{2}$ has Hermite rank 1, whereas the Hermite rank of $H(x)$ is 2. Such a difference will result in changes of the dependence structure between the Gaussian core and the limiting Brownian motion $B$. Since both the first-order multiple integral of the Hermite expansion of $V(G,f)_t^n$ and the Gaussian core belong to the 1st Wiener chaos, $V(G,f)_t^n$ and the Gaussian core are no longer orthogonal. The following theorem establishes the asymptotic dependence structure between $V(G,f)_t^n$ and the Gaussian core.\\
\begin{thm}\label{multiclt} Let Assumption \ref{slowlyvarying}, \ref{kernelmeasure} and \ref{lesszero} hold, then we have
$$(G_t,\sqrt{n}(V(G,f)_t^n-\frac{1}{2}t))\overset{\mathcal{L}}{\rightarrow}(G_t,\sqrt\beta B_t),$$
where $B_t$ is a Brownian motion independent of the process $\{G_t-G_0\}$ and the weak convergence holds in $\mathcal{D}([0,T])^2$ equipped with the Skorohod topology.
\end{thm}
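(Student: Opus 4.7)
The plan is to leverage the orthogonality of distinct Wiener chaoses to separate the Gaussian core (which lives in the first chaos) from the higher-chaos components of the normalised semivariance. Using the Hermite expansion $\varphi(x) := f(x) - \tfrac{1}{2} = \sum_{k \geq 1} a_k H_k(x)$ (with $a_1 \neq 0$, which is the Hermite-rank-$1$ issue flagged in the paragraph preceding the theorem), the centred statistic decomposes as
\begin{equation*}
\sqrt{n}\bigl(V(G,f)^n_t - \tfrac{1}{2}t\bigr) = T_n^{(1)}(t) + T_n^{(\geq 2)}(t), \qquad T_n^{(k)}(t) = I_k\Bigl(\tfrac{a_k}{\sqrt{n}} \sum_{i=1}^{[nt]} (e_i^n)^{\otimes k}\Bigr),
\end{equation*}
where $e_i^n \in \mathcal{H}$ represents the rescaled increment $\Delta_i^n G / \tau_n$. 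The first-chaos piece collapses to $\tfrac{a_1}{\sqrt{n}\tau_n}\bigl(G_{[nt]/n} - G_0\bigr)$, whose squared $L^2$-norm is of order $(n\tau_n^2)^{-1} = n^{2\alpha}/L_0(1/n)$. Under Assumption \ref{lesszero} this tends to $0$ uniformly on $[0,T]$, so $T_n^{(1)} \to 0$ in $L^2$, and Theorem \ref{weakGcore} together with Slutsky gives $T_n^{(\geq 2)} \overset{\mathcal{L}}{\to} \sqrt{\beta} B$.

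Since $T_n^{(\geq 2)}$ is supported on chaoses of order $\geq 2$ while $G_t - G_0$ sits in the first chaos, the isometry property of multiple Wiener-It\^o integrals gives $\mathrm{Cov}\bigl(G_{t_j} - G_0,\, T_n^{(\geq 2)}(s_\ell)\bigr) = 0$ for every $n, j, \ell$. For the finite-dimensional distributions I would then apply Theorem \ref{MGFMT} to the stacked vector $\bigl(G_{t_j} - G_0,\, T_n^{(\geq 2)}(s_\ell)\bigr)_{j,\ell}$: the $G$-components contribute only fixed first-chaos kernels (so no contraction conditions arise for them); the contraction and limit-covariance conditions for the $T$-block are those already required in Theorem \ref{weakGcore}; and the cross-blocks of each limiting $\Sigma^m$ vanish by the orthogonality just noted. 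The resulting jointly Gaussian limit with block-diagonal covariance forces the limit $\sqrt{\beta} B$ of $T_n^{(\geq 2)}$ to be independent of $\{G_t - G_0\}$; adding the negligible $T_n^{(1)}$ back via Slutsky recovers the joint limit of $(G_t,\, \sqrt{n}(V(G,f)^n_t - t/2))$.

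To upgrade the finite-dimensional convergence to convergence in $\mathcal{D}([0,T])^2$ with the Skorohod topology, I would combine the above with joint tightness: $G_\cdot$ is tight in $C([0,T])$ via its continuous Gaussian modification (standard under Assumption \ref{slowlyvarying}), and $\sqrt{n}(V(G,f)^n - t/2)$ is tight in $\mathcal{D}([0,T])$ by Theorem \ref{weakGcore}; because one marginal limit is continuous, tightness in the product Skorohod topology follows from tightness in each coordinate. The step I expect to be most delicate is the verification of the hypotheses of Theorem \ref{MGFMT} for the joint vector: the stated positive-definiteness of each $\Sigma^m$ is stronger than what is genuinely available, because each $\Sigma^m$ is supported on only one of the two blocks and so is singular on the full vector. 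The conclusion nevertheless goes through under positive semi-definiteness of the individual $\Sigma^m$ provided the aggregate $\Sigma = \sum_m \Sigma^m$ is positive definite, which is exactly our situation; alternatively one may apply the CLT to the $T$-block alone and stitch in the $G$-coordinate using the vanishing cross-covariance and joint Gaussianity of the limit.
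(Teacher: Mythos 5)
Your proposal is correct and follows essentially the same route as the paper: both arguments isolate the first Wiener chaos as the only possible source of correlation with $G$, reduce the key estimate to the telescoping sum $\frac{1}{n}\sum_{i,j}r_n(|i-j|)=R(1)/(nR(1/n))\to 0$ under $\alpha<0$, invoke the multivariate fourth-moment theorem for the joint Gaussian limit with block-diagonal covariance, and obtain tightness of the pair from tightness of each coordinate. The only (cosmetic) difference is that you show the first-chaos projection $T_n^{(1)}$ vanishes in $L^2$ and get exact orthogonality of $T_n^{(\geq 2)}$ with $G$ for free, whereas the paper keeps all chaoses in $Z_n^l$ and computes $\mathbb{E}[G_n^l Z_n^l]\to 0$ directly — the same computation repackaged — and your remark that the positive-definiteness hypothesis of Theorem \ref{MGFMT} must be relaxed to semi-definiteness is a legitimate point that the paper's own application also silently relies on.
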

Note that in our settings, we already denote the $\sigma$-algebra generated by the process $\{G_t-G_0\}$ by $\mathcal{F}$, Theorem \ref{multiclt} is equivalent to the following stable convergence theorem by applying condition $D^{''}$ from Proposition 2 of \cite{AE1978}.\\
\begin{thm}\label{stclt} Under Assumption \ref{slowlyvarying}, \ref{kernelmeasure} and \ref{lesszero}, we have
$$\sqrt{n}(V(G,f)_t^n-\frac{1}{2}t)\overset{st.}{\rightarrow}\sqrt{\beta}B_t,\ n\rightarrow\infty,\ \text{w.r.t. }\sigma\text{-algebra }\mathcal{F}.$$
\end{thm}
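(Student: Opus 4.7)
The plan is to deduce Theorem \ref{stclt} directly from Theorem \ref{multiclt} via the Aldous--Eagleson characterisation of stable convergence (condition $D''$ in Proposition 2 of \cite{AE1978}). Recall that $D''$ states that a weakly convergent sequence $X_n \overset{\mathcal{L}}{\rightarrow} X$ converges $\mathcal{F}$-stably to $X$ if and only if, for every bounded $\mathcal{F}$-measurable random variable $Y$, the joint convergence $(X_n, Y) \overset{\mathcal{L}}{\rightarrow} (X, Y)$ holds on the product space. The candidate limit $\sqrt{\beta}B_t$ is independent of $\mathcal{F}$, so for any such $Y$ the target joint law is simply the product of the marginals.

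First I would reduce the verification of $D''$ to a cylindrical class of test random variables. By construction $\mathcal{F}=\sigma(Z)$, and $\mathcal{H}$ is the $L^{2}$-closure of rescaled increments of $G$, so $\mathcal{F}$ coincides (up to null sets) with the $\sigma$-algebra generated by $\{G_t-G_0:t\in[0,T]\}$. A standard monotone class argument then lets me restrict to random variables of the form $Y=\phi(G_{t_1}-G_0,\ldots,G_{t_k}-G_0)$ with $\phi:\mathbb{R}^k\to\mathbb{R}$ bounded and continuous and $0\le t_1<\cdots<t_k\le T$.

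Second, for such $Y$ I would read the required joint convergence straight off Theorem \ref{multiclt}. Weak convergence in $\mathcal{D}([0,T])^2$ under the Skorokhod topology implies convergence of finite-dimensional distributions at continuity points of the limit, and the limit process $(G,\sqrt{\beta}B)$ is almost surely continuous; hence
\begin{equation*}
\bigl(G_{t_1}-G_0,\ldots,G_{t_k}-G_0,\sqrt{n}\bigl(V(G,f)_t^n-\tfrac{1}{2}t\bigr)\bigr)\overset{\mathcal{L}}{\rightarrow}\bigl(G_{t_1}-G_0,\ldots,G_{t_k}-G_0,\sqrt{\beta}B_t\bigr).
\end{equation*}
Applying the continuous mapping theorem with $\phi$ in the first $k$ coordinates yields $(\sqrt{n}(V(G,f)_t^n-\frac{1}{2}t),Y)\overset{\mathcal{L}}{\rightarrow}(\sqrt{\beta}B_t,Y)$, which is exactly the joint convergence demanded by $D''$.

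The only remaining work is the monotone class extension from cylindrical $Y$ to arbitrary bounded $\mathcal{F}$-measurable $Y$. This step is routine but needs a mild uniform tightness remark for $X_n:=\sqrt{n}(V(G,f)_t^n-\frac{1}{2}t)$, which is already delivered by Theorem \ref{weakGcore}. I do not expect a genuine obstacle here, because all of the analytic content — the chaos expansion, the control of the dominant correlation sequence $\bar r(j)$, and the independence of the limiting Brownian motion $B$ from $\{G_t-G_0\}$ — has already been absorbed into Theorem \ref{multiclt}; the proposition above is, in effect, only a formal translation of that joint weak convergence statement into the language of stable convergence.
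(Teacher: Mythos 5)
Your proposal is correct and follows essentially the same route as the paper, which likewise derives Theorem \ref{stclt} directly from the joint weak convergence in Theorem \ref{multiclt} by invoking condition $D''$ of Proposition 2 in \cite{AE1978}, using that $\mathcal{F}$ is the $\sigma$-algebra generated by $\{G_t-G_0\}$ and that the limiting Brownian motion is independent of it. You merely spell out the reduction to cylindrical test variables and the monotone class step, which the paper leaves implicit.
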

After establishing the asymptotic theory for the Gaussian core, it remains to take the stochastic volatility $\sigma_t$ into consideration. We already assumed that the process $\sigma_t$ is c\`adl\`ag, which is not sufficient for the CLT for the realised semivariance of BSS processes. Here we make two additional assumptions on the process $\sigma$, which are used in the related literature. \\
\begin{asm}\label{Holdercont}
The volatility process $\sigma_t$ is non-negative, $\mathcal{F}$-measurable, and $\eta$-H\"older continuous.
\end{asm}
\begin{asm}\label{Holderadv}
We assume that $\eta>\frac{1}{2}$ and there exist a constant $\lambda<-1$ such that for any $\epsilon_n=O(n^{-\kappa}),\ \kappa\in(0,1)$, we have
$$\pi^n((\epsilon_n,\infty))=O(n^{\lambda(1-\kappa)}).$$
\end{asm}
Under the above assumptions, the final theorem, the CLT for the realised semivariance of BSS processes, can be proved by using the so-called blocking technique.\\
\begin{thm}[Central limit theorem]\label{clt} Let Assumption \ref{slowlyvarying}, \ref{kernelmeasure}, \ref{lesszero}, \ref{Holdercont} and \ref{Holderadv} hold. Then we have
$$\sqrt{n}\left(V(X,f)_t^n-\frac{1}{2}\int_0^t\sigma_s^2ds\right)\overset{\mathcal{L}}{\rightarrow}\sqrt\beta\int_{0}^t\sigma_s^2dB_s,\ n\rightarrow\infty,$$
in the Skorokhod space $\mathcal{D}([0,T])$ where $\beta=\sum_{k=1}^\infty k!a_k^2(1+2\sum_{i=1}^\infty\rho(i)^k)<\infty$, $B$ is a Brownian motion independent of the volatility process $\sigma$.
\end{thm}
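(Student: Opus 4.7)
The plan is to use the blocking technique to reduce the CLT for the BSS realised semivariance to the stable Gaussian-core CLT (Theorem \ref{stclt}) applied blockwise, exploiting $\mathcal F$-measurability of $\sigma$ to pull it outside the limit. Choose a sequence $N_n$ with $N_n \to \infty$ and $N_n/n \to 0$, partition $[0,T]$ into blocks $[t_{j-1}, t_j]$ with $t_j = j/N_n$, and for each $i$ with $(i-1)/n \in [t_{j-1}, t_j)$ write
\[
\Delta_i^n X = \sigma_{t_{j-1}} \Delta_i^n G + R_i^n, \qquad R_i^n = \int_{-\infty}^{i/n}\bigl(g(\tfrac{i}{n}-s) - g(\tfrac{i-1}{n}-s)\bigr)(\sigma_s - \sigma_{t_{j-1}})\, dW_s.
\]
Since $\sigma \geq 0$ and $f(x)=x^2 1_{\{x\geq 0\}}$ obeys $f(\sigma y)=\sigma^2 f(y)$ for $\sigma\geq 0$, the pointwise bound $|f(x)-f(y)| \leq |x-y|(|x|+|y|)$ yields the decomposition
\[
V(X,f)_t^n = \sum_{j=1}^{[N_n t]} \sigma_{t_{j-1}}^2\, U_j^n + E_t^n, \qquad U_j^n := \frac{1}{n\tau_n^2}\sum_{i \in b_j} f(\Delta_i^n G),
\]
where $b_j$ indexes the increments in block $j$ and $|E_t^n| \lesssim \frac{1}{n\tau_n^2}\sum_j \sum_{i\in b_j}(\sigma_{t_{j-1}}|R_i^n|\,|\Delta_i^n G| + (R_i^n)^2)$.

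The first core step is to show $\sqrt n\, E_t^n \overset{\mathbb P}{\to} 0$ uniformly in $t$. Split the integration in $R_i^n$ at $s = t_{j-1}-\epsilon_n$ with $\epsilon_n = n^{-\kappa}$. On the near part, Assumption \ref{Holdercont} gives $\mathbb E[(\sigma_s-\sigma_{t_{j-1}})^2] \lesssim (\epsilon_n+N_n^{-1})^{2\eta}$, contributing $\lesssim \tau_n^2(\epsilon_n+N_n^{-1})^{2\eta}$ to $\mathbb E[(R_i^n)^2]$. On the distant part, the It\^o isometry combined with Assumption \ref{Holderadv} contributes $\lesssim \tau_n^2 \pi^n((\epsilon_n,\infty)) \lesssim \tau_n^2 n^{\lambda(1-\kappa)}$. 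Cauchy--Schwarz and the summability of the dominant correlation sequence from Assumption \ref{slowlyvarying} yield $\mathbb E[\sqrt n |E_t^n|] \lesssim \sqrt n\bigl((\epsilon_n+N_n^{-1})^\eta + \pi^n((\epsilon_n,\infty))^{1/2}\bigr)$; the conditions $\eta>1/2$ and $\lambda<-1$ permit a choice of $\kappa\in(0,1)$ and $N_n=n^\mu$ that annihilates this bound.

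The second step handles the main decomposition
\[
\sqrt n\bigl(V(X,f)_t^n - \tfrac12\int_0^t \sigma_s^2\,ds\bigr) = \sum_{j=1}^{[N_n t]} \sigma_{t_{j-1}}^2 \sqrt n\bigl(U_j^n - \tfrac{1}{2N_n}\bigr) + \sqrt n\bigl(\tfrac{1}{2N_n}\sum_j \sigma_{t_{j-1}}^2 - \tfrac12\int_0^t \sigma_s^2\,ds\bigr) + \sqrt n\, E_t^n.
\]
The Riemann-sum error is $O(\sqrt n\, N_n^{-\eta})$ by H\"older continuity of $\sigma^2$, hence $o(1)$ for a compatible choice of $N_n$. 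For the weighted sum, a stationary shift of Theorem \ref{stclt} applied to each block gives $\mathcal F$-stable convergence of each $\sqrt n(U_j^n - 1/(2N_n))$ to $\sqrt\beta\,(B_{t_j}-B_{t_{j-1}})$, with $B$ an $\mathcal F$-independent Brownian motion. Joint stable convergence across the $[N_n t]$ blocks is verified by checking the conditions of the multivariate chaos Theorem \ref{MGFMT} for the block-wise Wiener chaos contributions; combined with the $\mathcal F$-measurability of $\sigma$, this identifies the sum in the limit as the Riemann-sum approximation of $\sqrt\beta\int_0^t \sigma_s^2\, dB_s$. Tightness in $\mathcal D([0,T])$ follows from the moment bounds already used for the Gaussian-core CLT, inherited blockwise via boundedness of $\sigma$ on $[0,T]$.

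The main obstacle is the remainder bound in the first step: the two competing error scales $(\epsilon_n+N_n^{-1})^\eta$ from the H\"older approximation of $\sigma$ and $\pi^n((\epsilon_n,\infty))^{1/2}$ from the BSS memory must be simultaneously balanced against $\sqrt n$, and it is the coupling of Assumptions \ref{Holdercont} and \ref{Holderadv} that allows a compatible choice of $\kappa$ and $N_n$. A secondary subtlety is upgrading the one-block stable convergence to joint stable convergence over a growing number of blocks and then identifying the mixed-normal limit with a stochastic integral; this is handled by the fact that the prelimit has the product structure $\sigma_{t_{j-1}}^2(B_{t_j}-B_{t_{j-1}})$ and $\sigma^2$ is $\mathcal F$-measurable and continuous, so the corresponding Riemann sums converge to $\int_0^t \sigma_s^2\,dB_s$.
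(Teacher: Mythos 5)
Your overall strategy --- freeze the volatility on blocks, reduce to the Gaussian core, invoke the stable CLT of Theorem \ref{stclt}, and use $\mathcal{F}$-measurability of $\sigma$ to identify the mixed-normal limit --- is the same as the paper's (whose proof of Theorem \ref{clt} simply defers to the proof of Theorem \ref{Bi_CLT1} with superscripts omitted). However, there is a genuine gap in your execution. You take the number of blocks $N_n\to\infty$ coupled with $n$ (indeed your own remainder bound forces $N_n\gg n^{1/(2\eta)}$, because you freeze $\sigma$ at the \emph{coarse} block endpoints $t_{j-1}$ in the very first approximation step). You then need joint stable convergence of the vector of blockwise statistics $\bigl(\sqrt{n}(U_j^n-\tfrac{1}{2N_n})\bigr)_{j\le [N_n t]}$, whose dimension grows with $n$, and you propose to verify it with Theorem \ref{MGFMT}. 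That theorem is stated for a fixed dimension $d$; it gives no uniformity in $d$ and cannot be applied to a growing number of components. Likewise, Theorem \ref{stclt} is an asymptotic statement on a fixed interval and does not automatically yield a normal approximation on blocks of shrinking length $1/N_n$ with errors small enough to sum over $N_n$ blocks. As written, the key step "joint stable convergence across the $[N_n t]$ blocks" is therefore unjustified, and the subsequent identification of the limit with $\sqrt{\beta}\int_0^t\sigma_s^2\,dB_s$ does not go through.

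The paper avoids this by performing the freezing in two stages. First, $\Delta_i^nX$ is replaced by $\sigma_{(i-1)/n}\Delta_i^nG$ at the \emph{fine} scale (the term $A_t^n$ in the proof of Theorem \ref{Bi_CLT1}); the resulting error is of order $n^{-\eta}$ per increment and is killed by $\eta>1/2$ together with Assumption \ref{Holderadv}, with no reference to any block size. Second, the weights $\sigma_{(i-1)/n}^2$ are coarsened to $\sigma_{(j-1)/l}^2$ on a \emph{fixed} number $l$ of blocks (the terms $B_t^{\prime n,l}, B_t^{\prime\prime n,l}, C_t^{n,l}, D_t^n$); for fixed $l$ the stable CLT applies to a finite-dimensional vector, giving $C_t^{n,l}\overset{st.}{\to}\sum_{j\le [lt]}\sigma_{(j-1)/l}^2\sqrt{\beta}(B_{j/l}-B_{(j-1)/l})$, and the blocking error is controlled in the order $\lim_{l\to\infty}\limsup_{n\to\infty}$ via the weighted-sum estimate of Corcuera--Nualart--Podolskij rather than by letting the blocks shrink. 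If you want to repair your argument, either adopt this two-stage scheme with fixed $l$, or supply a quantitative (Berry--Esseen type) version of the blockwise stable CLT that is uniform over $O(N_n)$ blocks --- the latter is substantially harder and is not provided by any result quoted in the paper.
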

\begin{rmk}
Notice that the weak law of large numbers (WLLN) for $V(X,f)_t^n$ will be a direct corollary of Theorem \ref{clt}. For a WLLN, we only need Assumption \ref{slowlyvarying} and Assumption \ref{kernelmeasure}, and all the other assumptions we have made in this section are not needed. 
\end{rmk}

\begin{rmk}
The above central limit theorem can be generalised to a class of functions $g$ consisting of the power function and indicator function $I(x)=1$ or $1_{\{x\ge0\}}$ or $1_{\{x\le0\}}$. Since the main focus of this paper is to develop CLT for semicovariances, and the univariate case can be viewed as a special case of the multivariate one, we will discuss the generalisation for the multivariate case in detail in the next section, see Theorem \ref{GBi_CLT}.
\end{rmk}

\section{Multivariate case: realised semicovariance of a bivariate BSS process}
\subsection{Setting}
The setup in the multivariate case will be a natural extension of the univariate case. We will only focus on the bivariate case since higher dimensions do not require any essential changes to the proof but result in a more complicated exposition.\\
\begin{Def}[The Gaussian cores]
Let $W^{(1)}$ and $W^{(2)}$ be two $\mathcal{F}_t$-adapted Brownian measures and jointly Gaussian. $g^{(1)}$, $g^{(2)}$ are two deterministic functions with the same properties in Definition \ref{gaussiancore}. The Gaussian cores are defined as
\begin{equation}
G_t^{(j)}=\int_{-\infty}^tg^{(j)}(t-s)dW_s^{(j)},\ j=1,2.
\end{equation}
Moreover, we assume that $W^{(1)}$ and $W^{(2)}$ satisfy $\mathbb{E}[dW_t^{(1)}dW_t^{(2)}]=\rho dt$, for $\rho\in[-1,1]$. Then it is possible to see that the bivariate Gaussian core $(G_t^{(1)},G_t^{(2)})$ is a stationary Gaussian process with stationary increments.
\end{Def}

\begin{Def}[Bivariate Brownian semistationary process] Let $\sigma^{(1)},\sigma^{(2)}$ be $\mathcal{F}_t$-adapted c\`adl\`ag processes, and assume that the function $g^{(j)}$ is continuously differentiable on $(0,\infty)$, $|g^{(j)}{'}|$ is non-increasing on $(b^{(j)},\infty)$ for some $b^{(j)}>0$ and $g^{(j)}{'}\in L^2((\epsilon,\infty))$ for any $\epsilon>0$, $j=1,2$. 
Then we define the Brownian semistationary processes as
\begin{equation}
Y_t^{(j)}=\int_{-\infty}^tg^{(j)}(t-s)\sigma_s^{(j)}dW_s^{(j)},\ j=1,2.
\end{equation}
We also require $\int_{-\infty}^tg^{(j)2}(t-s)\sigma_s^{(j)2}ds<\infty\ a.s.$ to ensure that $Y_t^{(j)}<\infty\ a.s.$ for all $t\ge0$ and $j=1,2$. 
Moreover, we assume that for any $t>0$,
\begin{equation}
F_t^{(j)}=\int_{1}^\infty(g^{(j)}{'}(s))^2\sigma_{t-s}^{(j)2}ds<\infty,\ a.s.,\ j=1,2.
\end{equation}
\end{Def}
We denote 
$$R^{(i,j)}(t)=\mathbb{E}[|G_t^{(j)}-G_0^{(i)}|^2],\ R^{(j)}(t)=R^{(j,j)}(t)\ \text{and}\ \tau_n^{(j)}=\sqrt{R^{(j)}\left(\frac{1}{n}\right)},\ i,j=1,2,\ n\ge1.$$
The cross-correlations are given by
$$r_{a,b}^{(n)}(j-i):=\mathbb{E}\left[\frac{\Delta_i^nG^{(a)}}{\tau_n^{(a)}}\frac{\Delta_j^nG^{(b)}}{\tau_n^{(b)}}\right].$$
For function $p(x)=max\{x,0\}=x1_{\{x\ge0\}}$, the realised semicovariance for $Y$ is defined as 
$$V(Y,p)_t^n:=\frac{1}{n}\sum_{i=1}^{[nt]}p\left(\frac{\Delta_i^nY^{(1)}}{\tau_n^{(1)}}\right)p\left(\frac{\Delta_i^nY^{(2)}}{\tau_n^{(2)}}\right).$$
Analogously, the realised semicovariance for $G$ is defined as 
$$V(G,p)_t^n:=\frac{1}{n}\sum_{i=1}^{[nt]}p\left(\frac{\Delta_i^nG^{(1)}}{\tau_n^{(1)}}\right)p\left(\frac{\Delta_i^nG^{(2)}}{\tau_n^{(2)}}\right).$$
Next, we introduce some notations for the bivariate setting. We consider Gaussian vectors\\
$$\bm{X}_i^n:=(X_i^{n(1)},X_i^{n(2)})=\left(\frac{\Delta_i^nG^{(1)}}{\tau_n^{(1)}},\frac{\Delta_i^nG^{(2)}}{\tau_n^{(2)}}\right),\ i\in\mathbb{Z}.$$
Since ${X}^{n(i)}_j,\ i=1,2,\ j=1,2,...,[nt]$ can be regarded as a subset of an isonormal Gaussian process $\{W(u):u\in\mathcal{H}\}$ where $\mathcal{H}$ is an Hilbert space, we can always assume that 
$$X_k^{n(j)}=W(u_{k,j}^n)\quad\text{ and }\quad\langle u_{k,j}^n,u_{k',j'}^n\rangle_{\mathcal{H}}=r_{j,j'}^{(n)}(k'-k),$$
where $j,j'\in\{1,2\}$, $k,k'\in \mathbb{N},\ u_{k,j}^n,u_{k',j'}^n\in\mathcal{H}$ and $r_{j,j'}^{(n)}(k'-k)$ we already defined before.\\
Next, we define our bivariate function $h:\mathbb{R}^2\rightarrow\mathbb{R}^+\cup\{0\}$ as
$$h(x,y):=xy1_{\{x\ge0\}}1_{\{y\ge0\}}.$$
The rescaled sums of the realised semicovariance can be defined as
\begin{equation}
S_n(t):=\frac{1}{\sqrt{n}}\sum_{k=1}^{[nt]}(h(\bm{X}_k^n)-\mathbb{E}[h(\bm{X}_k^n)]),\ n\ge1.
\end{equation}

For simplicity, some results will be established only for the case $t=1$, and we write $S_n$ as a shorthand for $S_n(1)$.\\
By the Hermite expansion of the function $h$, we have
\begin{equation}
S_n=\sum_{m=1}^\infty I_m(g_m^n),\ g_m^n\in\mathcal{H}^{\odot m},
\end{equation}
where $g_m^n$ has the form
\begin{equation}
g_m^n=\frac{1}{\sqrt{n}}\sum_{k=1}^n\sum_{t\in\{1,2\}^m}b_t^nu_{k,t_1}^n\otimes\cdots\otimes u_{k,t_m}^n,
\end{equation}
where $b_t^n$ are certain coefficients such that $t\mapsto b_t^n$ is symmetric on $\{1,2\}^m$.\\

For the multivariate case, we only focus on the central limit theorem of the realised semicovariance. Thus we state here all of the assumptions we need. They can be viewed as the analogues of the assumptions for the univariate case. \\
For $i,j\in\{1,2\}$, we write $\rho_{i,j}=\rho$ for $i\neq j$ and $\rho_{i,j}=1$ for $i=j$. Since 
\begin{equation}\label{multivarigram}
R^{(i,j)}(t)=C_{i,j}+2\rho_{i,j}\int_0^\infty(g^{(j)}(x)-g^{(j)}(x+t))g^{(i)}(x)dx,
\end{equation}
where $C_{i,j}=\|g^{(i)}\|^2_{L^2}+\|g^{(j)}\|^2_{L^2}-2\rho_{i,j}\int_0^\infty g^{(i)}(x)g^{(j)}(x)dx$, we can formulate our assumptions as follows.\\
\begin{asm}\label{multislowlyvarying}
There exist slowly varying (at $0$) functions $L_0^{(i,j)}(t)$ and $L_2^{(i,j)}(t)$ which are continuous on $(0,\infty)$ such that
\begin{equation}
R^{(i,j)}(t)=C_{i,j}+\rho_{i,j}t^{\delta^{(i)}+\delta^{(j)}+1}L_0^{(i,j)}(t),\ i,j=1,2,
\end{equation}
and
\begin{equation}
R^{(i,j)}{''}(t)=\rho_{i,j}t^{\delta^{(i)}+\delta^{(j)}-1}L_2^{(i,j)}(t),\ i,j=1,2,
\end{equation}
where $\delta^{(i)},\delta^{(j)}\in(-\frac{1}{2},0)$. Furthermore, there exist $b\in(0,1)$ such that
\begin{equation}
\limsup_{x\rightarrow0^+}\sup_{y\in[x,x^b]}\left|\frac{L_2^{(i,j)}(y)}{L_0^{(i,j)}(x)}\right|<\infty, i,j=1,2.
\end{equation}
\end{asm}
\begin{asm}\label{multivolatility}
The volatility processes $\sigma_t^{(i)}$ is non-negative and $\eta^{(i)}$-H\"older continuous with $\eta^{(i)}\in(\frac{1}{2},1)$, $i=1,2$.
\end{asm}
\begin{asm}\label{multikernelmeasure}
There exist a constant $\lambda<-1$ such that for any $\epsilon_n=O(n^{-\kappa}),\ \kappa\in(0,1)$, we have
$$\pi_n^{(i)}((\epsilon_n,\infty))=O(n^{\lambda(1-\kappa)}),\ i=1,2,$$
where
$$\pi_n^{(i)}(A)=\frac{\int_A(g^{(i)}(x-1/n)-g^{(i)}(x))^2dx}{\int_0^\infty(g^{(i)}(x-1/n)-g(x))^2dx},\ A\in\mathcal{B}(\mathbb{R}).$$
\end{asm}

\subsection{The central limit theorem for the realised semicovariance}
As in the univariate case, the process
$$S_n(t)=\frac{1}{\sqrt{n}}\sum_{k=1}^{[nt]}(h(\bm{X}_k^n)-\mathbb{E}[h(\bm{X}_k^n)]),\ n\ge1,$$
converges in law to a scaled Brownian motion $\sqrt\beta B$ in the space $\mathcal{D}([0,T])$.\\
To this end, we still need two steps: 
(1) Prove convergence of the finite-dimensional distributions; (2) Prove the tightness of the process.\\
The first step has already been done in \cite{NP2011}, whose result contains our realised semicovariance for the Gaussian cores as a special case. However, the tightness is more difficult to prove. As we know, if we can prove that
 $$\|S_n(t)-S_n(s)\|_{L^p(\Omega)}\le c\left(\frac{[nt]-[ns]}{n}\right)^{\frac{1}{2}},\ 0\le s\le t\le T,$$
for some $p>2$, the tightness will hold. Thanks to Lemma 1 in \cite{BS2013}, we are able to control the $p$-th moment of the increment of the process $S_n(t)$. More specifically, we have the following result.\\

\begin{lem}[\cite{BS2013}, Lemma 1]\label{lemma1}
If $(\bm{X}_1,...,\bm{X}_n)$ is a $\epsilon$-standard Gaussian vector ($\epsilon$-standard means that $|\mathbb{E}X_t^{(u)}X_s^{(v)}|<\epsilon$ holds for any  $1\le u,v\le\nu$ and $t\neq s$), $\bm{X}_t=(X_t^{(1)},...,X_t^{(\nu)})\in\mathbb{R}^\nu,\nu\ge1,$ and $f_{j,t,n}\in L^2(\bm{X}),1\le j\le p,p\ge2, 1\le t\le n$. For given integers $m\ge1, 0\le \alpha\le p,n\ge1$, define
\begin{equation}
Q_n:=\max_{1\le t\le n}\sum_{1\le s\le n,s\neq t}\max_{1\le u,v\le\nu}|\mathbb{E}X_t^{(u)}X_s^{(v)}|^m.
\end{equation}
Assume that $f_{1,t.n},...,f_{\alpha,t,n}$ have a Hermite rank at least equal to $m$ for any $n\ge1,1\le t\le n$, and that $$\epsilon<\frac{1}{\nu p-1}.$$
Then 
\begin{equation}
\sum {}'|\mathbb{E}[f_{1,t_1,n}(\bm{X}_{t_1})\cdots f_{p,t_p,n}(\bm{X}_{t_p})]|\le C(\epsilon,p,m,\alpha,\nu)Kn^{p-\frac{\alpha}{2}}Q_n^{\frac{\alpha}{2}},
\end{equation}
where $\sum {}'$ is the sum over all different indices $1\le t_i\le n\ (1\le i \le p),\ t_i\neq t_j\ (i\neq j)$,
and $$K=\prod_{j=1}^p\max_{1\le t\le n}\|f_{j,t,n}\|\text{ with }\|f_{j,t,n}\|^2=\mathbb{E}[f_{j,t,n}^2(\bm{X})].$$
\end{lem}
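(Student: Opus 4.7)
The plan is to control the multi-point moment via a Hermite-diagram expansion together with a Hölder-type argument that converts small inter-vertex covariances into factors of $Q_n$.

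I would first expand each $f_{j,t_j,n}$ in its multivariate Hermite series in the $\nu$ coordinates of $\bm{X}_{t_j}$ (after an orthogonalising linear change of variables within each time block if the within-block covariance is nondiagonal). The Hermite-rank hypothesis means that in the expansions of the first $\alpha$ factors only multi-indices $\bm{q}$ with $|\bm{q}|\ge m$ appear. Substituting the expansions into $\mathbb{E}[f_{1,t_1,n}(\bm{X}_{t_1})\cdots f_{p,t_p,n}(\bm{X}_{t_p})]$ and invoking the diagram formula (Wick/Isserlis for products of Hermite polynomials of Gaussians) rewrites the moment as a sum of products of covariances $\mathbb{E}[X_{t_i}^{(u)}X_{t_j}^{(v)}]$ indexed by complete pairings of half-edges, with the constraint that no pair may join two half-edges attached to the same time vertex.

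For each such pairing I would examine the induced multigraph on $\{1,\ldots,p\}$. Because each of the first $\alpha$ vertices carries at least $m$ half-edges and none of them can self-loop, at least $\lceil\alpha m/2\rceil$ inter-vertex edges (counted with multiplicity) touch these vertices. Using the $\epsilon$-standard bound on cross-covariances together with successive Hölder inequalities calibrated by the exponent $\nu p$ — which is precisely where the hypothesis $\epsilon<(\nu p-1)^{-1}$ enters, guaranteeing that the resulting geometric-type series are summable — I would trade each group of $m$ inter-vertex half-edges incident to a high-rank vertex for a single factor of the form $\max_{u,v}|\mathbb{E}X_{t_i}^{(u)}X_{t_j}^{(v)}|^m$, which is exactly the quantity appearing in the definition of $Q_n$. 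Summing over $(t_1,\ldots,t_p)$ with distinct coordinates and using $Q_n=\max_t\sum_{s\ne t}\max_{u,v}|\mathbb{E}X_t^{(u)}X_s^{(v)}|^m$, each consolidation "locks" one endpoint, contributing one factor of $Q_n$; pairing the $\alpha$ high-rank vertices into $\alpha/2$ consolidations locks $\alpha/2$ indices, while the remaining $p-\alpha/2$ indices range freely over $\{1,\ldots,n\}$, producing the announced $n^{p-\alpha/2}Q_n^{\alpha/2}$. The Hermite coefficients are absorbed by Parseval's identity into the constant $K=\prod_j\max_t\|f_{j,t,n}\|$.

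The hardest step will be the combinatorial bookkeeping in the Hölder stage: regardless of how the pairings distribute the half-edges, one must show that the inter-vertex edges incident to high-rank vertices can always be grouped into exactly $\alpha/2$ consolidations each yielding a single $Q_n$. This is non-trivial when the $\alpha$ high-rank vertices are not cleanly paired among themselves and some of their half-edges run to low-rank vertices, since then the naive grouping wastes half-edges. Handling these configurations needs a careful optimisation over pairings, a repeated use of the $\epsilon$-standard bound to absorb the "wasteful" contributions into geometric factors, and a counting argument bounding the number of distinct diagrams uniformly in $n$; the prefactor $C(\epsilon,p,m,\alpha,\nu)$ is precisely what arises from this combinatorial sum.
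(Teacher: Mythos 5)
This lemma is not proved in the paper at all: it is quoted verbatim as Lemma~1 of \cite{BS2013} and used as an external input, so there is no in-paper argument to compare yours against. Judged on its own merits, your sketch does follow the broad lines of the Bardet--Surgailis proof (multivariate Hermite expansion, diagram/Wick formula, absolute summability of the diagram series guaranteed by $\epsilon<\frac{1}{\nu p-1}$, and Parseval absorbing the Hermite coefficients into $K$), and you correctly identify where the difficulty lies.

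However, the mechanism you propose for extracting $Q_n^{\alpha/2}$ --- ``pairing the $\alpha$ high-rank vertices into $\alpha/2$ consolidations, each yielding a single factor $Q_n$'' --- is a genuine gap, and it is precisely the configurations you flag (edges from high-rank vertices running to low-rank vertices, or $m$ edges from one vertex scattered over $m$ different targets) where such a pairing cannot be arranged; no ``optimisation over pairings'' rescues it. The correct device is different: since every inter-vertex edge has at most two endpoints among the first $\alpha$ vertices and $|\rho_e|\le\epsilon<1$, one bounds the product of edge weights by $\prod_{i\le\alpha}\bigl(\prod_{e\ni i}|\rho_e|\bigr)^{1/2}$, i.e.\ each edge donates a square root of its weight to each high-rank endpoint. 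Then, for each high-rank vertex $i$, the at least $m$ incident weights are combined via AM--GM into $\frac{1}{m}\sum_j|\rho(t_i,t_{s_j})|^m$, and the summation over the free index $t_i$ is handled by Cauchy--Schwarz: $\sum_{t_i}\bigl(\prod_{e\ni i}|\rho_e|\bigr)^{1/2}\le n^{1/2}\bigl(\sum_{t_i}\prod_{e\ni i}|\rho_e|\bigr)^{1/2}\le n^{1/2}Q_n^{1/2}$. Iterating over the $\alpha$ high-rank vertices and summing the remaining $p-\alpha$ indices freely gives $n^{p-\alpha}\cdot(n^{1/2}Q_n^{1/2})^{\alpha}=n^{p-\alpha/2}Q_n^{\alpha/2}$, which is where the exponent $\alpha/2$ actually comes from. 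Without this square-root redistribution your argument does not close, so as written the proposal is an outline of the right strategy with its central step missing.
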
 
Since our stationary vector is 2-dimensional, we let $\nu=2$. For a fixed $n$, define 
\begin{equation}
f_{i,t_j,n}(\bm{X}^n_{t_j}):=h(\bm{X}^n_{t_j})-\mathbb{E}[h(\bm{X}^n_{t_j})],
\end{equation}
where $\bm{X}_{t_j}^n=(X_{t_j}^{n(1)},X_{t_j}^{n(2)})=\left(\frac{\Delta_{t_j}^nG^{(1)}}{\tau_n^{(1)}},\frac{\Delta_{t_j}^nG^{(2)}}{\tau_n^{(2)}}\right)$. Thus each $f_{i,t_j,n}$ has Hermite rank 1. When $p=4$ and $\alpha=p$, the issue is that our Gaussian vector usually fails to be $\epsilon$-standard with $\epsilon<\frac{1}{7}$. We can overcome this issue by using the so-called ‘decimation technique', which was introduced in \cite{BS2018}. Then we have the functional convergence theorem of our Gaussian cores.\\
\begin{thm}\label{multiGcore} When Assumption \ref{multislowlyvarying} holds, we have
$$\{S_n(t)\}_{0\le t\le T}\overset{\mathcal{L}}{\rightarrow}\sqrt{\beta}\{B_t\}_{0\le t\le T},\ n\rightarrow\infty,$$
in $\mathcal{D}([0,T])$, where $B_t$ is a Brownian motion, $\beta$ is a constant depending on the Hermite coefficients and the limit of cross-correlations.
\end{thm}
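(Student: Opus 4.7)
The plan is to combine convergence of the finite-dimensional distributions with tightness in $\mathcal{D}([0,T])$, following the standard two-step functional CLT scheme. The f.d.d. convergence has already been established (as the paper notes, in \cite{NP2011}), but I would briefly confirm it by applying Theorem \ref{MGFMT} to the Hermite expansion $S_n(t)=\sum_{m=1}^{\infty}I_m(g_{m}^n\mathbf{1}_{[0,t]})$. Under Assumption \ref{multislowlyvarying} the cross-correlations $r_{i,j}^{(n)}(k)$ are dominated by a summable sequence of the form $C|k|^{\delta^{(i)}+\delta^{(j)}-1}$ since $\delta^{(i)}+\delta^{(j)}<0$; this summability implies that the chaos variances $m!\|g_m^n\|_{\mathcal{H}^{\otimes m}}^2$ converge to the entries of an explicit covariance matrix $\Sigma^m$, that $\sum_m\Sigma^m$ is finite (yielding the constant $\beta$), and that all contraction norms $\|g_m^n\otimes_p g_m^n\|_{\mathcal{H}^{\otimes 2m-2p}}^2$ vanish in the limit. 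Joint convergence at finitely many time points follows by the Cramér--Wold device and a routine adaptation of Theorem \ref{MGFMT}.

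The substantive part is tightness. By the classical criterion in $\mathcal{D}([0,T])$ (Theorem 13.5 in Billingsley) it suffices to establish
\begin{equation*}
\mathbb{E}\bigl[|S_n(t)-S_n(s)|^{4}\bigr]\le C\Bigl(\tfrac{[nt]-[ns]}{n}\Bigr)^{2},\qquad 0\le s\le t\le T,\ n\ge 1.
\end{equation*}
Expanding the fourth power produces a sum over quadruples $(t_1,t_2,t_3,t_4)$ of expectations $\mathbb{E}\bigl[\prod_{j=1}^{4}f_{j,t_j,n}(\bm{X}_{t_j}^n)\bigr]$, where $f_{j,t,n}$ is the centred version of $h$ and hence has Hermite rank exactly $1$. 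Contributions with coincident indices are of lower order and controlled via Cauchy--Schwarz using boundedness of $\mathbb{E}[h(\bm{X}^n)^4]$, which holds since $h$ has quadratic growth and $\bm{X}^n$ is Gaussian with uniformly bounded covariance. The sum $\sum{}'$ over pairwise distinct indices is precisely the quantity addressed by Lemma \ref{lemma1}. Applying it with $\nu=2$, $p=4$, $\alpha=4$, $m=1$ would yield $\sum{}'|\cdots|\le C\,K\,n^{2}Q_n^{2}$, and Assumption \ref{multislowlyvarying} together with the dominant-sequence estimate gives $Q_n=O(1)$ and $K=O(1)$, producing the required $O((t-s)^2)$ bound.

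The main obstacle, flagged in the text, is that Lemma \ref{lemma1} requires the Gaussian vector to be $\epsilon$-standard with $\epsilon<\frac{1}{\nu p-1}=\frac{1}{7}$, which fails at small lags because the $r_{i,j}^{(n)}(k)$ are close to $1$ (and equal to $1$ at $k=0$ for the diagonal entries). I would handle this by the decimation technique of \cite{BS2018}: fix an integer $D=D(\epsilon)$ large enough that $\max_{i,j}|r_{i,j}^{(n)}(Dk)|<\epsilon$ for all $k\ge 1$ and all $n$ beyond a threshold, which is possible because the cross-correlations decay in the lag by Assumption \ref{multislowlyvarying}. Partition $\{[ns]+1,\dots,[nt]\}$ into $D$ arithmetic subsequences of common difference $D$ and decompose
\begin{equation*}
S_n(t)-S_n(s)=\sum_{r=0}^{D-1}S_n^{(r)}(s,t),
\end{equation*}
where $S_n^{(r)}(s,t)$ collects the summands with indices congruent to $r$ modulo $D$. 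Each $S_n^{(r)}(s,t)$ is built from an $\epsilon$-standard sub-vector, so Lemma \ref{lemma1} applies and yields an $O((t-s)^2)$ fourth moment on each subsequence; Minkowski's inequality then reassembles the bound with a constant depending on $D$ but not on $n,s,t$. Combined with the f.d.d. limit, this delivers the claimed functional convergence to $\sqrt{\beta}B$ in $\mathcal{D}([0,T])$.
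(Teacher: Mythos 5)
Your overall strategy coincides with the paper's: finite-dimensional convergence via the (quantitative) Breuer--Major theorem of \cite{NP2011}, tightness via the fourth-moment bound $\mathbb{E}[|S_n(t)-S_n(s)|^4]\le C\left(\frac{[nt]-[ns]}{n}\right)^2$ obtained from Lemma \ref{lemma1} of \cite{BS2013}, and the decimation trick of \cite{BS2018} to restore the $\epsilon$-standard hypothesis with $\epsilon<\frac{1}{7}$. The decimation step and its reassembly (your Minkowski argument versus the paper's bound $l^4\max_j\mathbb{E}[T_{n,l}(j)^4]$) are equivalent.

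There is, however, one concrete step that fails as written: your treatment of the quadruples with coincident indices. When exactly one pair of the four indices coincides (the pattern $(t_1,t_2,t_3,t_3)$ with $t_1,t_2,t_3$ distinct), the sum runs over $O(N^3)$ terms with $N=[nt]-[ns]$, so bounding each expectation by a constant via Cauchy--Schwarz yields only $O(N^3/n^2)$ after normalisation, which is not $O((N/n)^2)$ --- it is worse by a factor of $N$. These terms are not of lower order and cannot be dismissed crudely; they must also be fed through Lemma \ref{lemma1}, now with $p=3$ and $\alpha=2$: the two unrepeated factors $f_{1,t_1,N}$, $f_{2,t_2,N}$ retain Hermite rank $1$, while the squared factor $f_{3,t_3,N}^2$ is only required to lie in $L^2$, and the lemma then gives the bound $CKn^{p-\alpha/2}Q_n^{\alpha/2}=CKN^2Q_n$, which is $O(N^2)$ since $Q_n\le\sup_n\theta^{(n)}<\infty$. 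This is exactly how the paper handles its term $\Sigma_3$, alongside $\Sigma_4$ (four distinct indices, $p=4$, $\alpha=4$) and the genuinely lower-order terms $\Sigma_{2,1}$, $\Sigma_{2,2}$ with only two distinct indices, for which the crude $O(N^2)$ count does suffice. With that correction your argument matches the paper's proof.
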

As in the univariate case, we can prove the independence between the limiting Brownian motion and the Gaussian cores.\\
\begin{thm}\label{multiweakconvergence} Let Assumption \ref{multislowlyvarying} hold, we have
$$(G_t^{(1)},G_t^{(2)},S(t))\overset{\mathcal{L}}{\rightarrow}(G_t^{(1)},G_t^{(2)},\sqrt{\beta}B_t),$$
in $\mathcal{D}([0,T])^3$, where $B_t$ is a Brownian motion independent of the bivariate process $(G_t^{(1)}-G_0^{(1)},G_t^{(2)}-G_0^{(2)})$.
\end{thm}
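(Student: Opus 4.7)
The plan is to adapt the argument behind Theorem \ref{multiclt} in the univariate case to the bivariate setting. Tightness of the triple $(G^{(1)}, G^{(2)}, S_n)$ in $\mathcal{D}([0,T])^3$ is immediate: $S_n$ is tight by Theorem \ref{multiGcore}, while the bivariate Gaussian core has (deterministic) continuous paths and is therefore trivially tight. Joint tightness then follows from the marginal tightness in the product topology. It remains to establish convergence of the finite-dimensional distributions and to identify the limiting dependence structure.

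Fix times $0 \le t_1 < \cdots < t_k \le T$ and consider the combined vector
\begin{equation*}
V_n = \bigl(G_{t_1}^{(1)}, G_{t_1}^{(2)}, \ldots, G_{t_k}^{(1)}, G_{t_k}^{(2)}, S_n(t_1), \ldots, S_n(t_k)\bigr).
\end{equation*}
Each $G_{t_i}^{(j)}$ is a single Wiener integral with respect to $W^{(j)}$ and hence lies in the first Wiener chaos of the ambient isonormal process, while each $S_n(t_l)$ admits the chaos expansion $\sum_{m \ge 1} I_m(g_m^n(t_l))$ analogous to the one written out for $S_n = S_n(1)$. I would then apply the multivariate fourth-moment theorem (Theorem \ref{MGFMT}) to $V_n$. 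Conditions (i), (iii), and (iv) for the $S_n$-coordinates have already been checked in the proof of Theorem \ref{multiGcore}, and they are trivial for the first-chaos components $G_{t_i}^{(j)}$.

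The central step is condition (ii), the identification of the limiting covariance matrix. The $(G,G)$-block converges (indeed is constant in $n$) to the fixed covariance of the bivariate Gaussian core, and the $(S,S)$-block converges to $\beta\,(t_i \wedge t_j)$ by Theorem \ref{multiGcore}. The main work, and the main obstacle, is to show that the $(G,S)$-cross-block vanishes in the limit. Because $G_{t_i}^{(j)}$ sits entirely in the first chaos and different chaoses are orthogonal, only the first-chaos component $I_1(g_1^n(t_l))$ can produce a non-zero covariance, which reduces to estimating
\begin{equation*}
\frac{1}{\sqrt{n}}\sum_{k=1}^{[nt_l]} \sum_{a \in \{1,2\}} b_a^n\, \frac{\mathbb{E}\bigl[G_{t_i}^{(j)}\, \Delta_k^n G^{(a)}\bigr]}{\tau_n^{(a)}}.
\end{equation*}
Each numerator is an increment of the deterministic cross-covariance function $(s,t)\mapsto\mathbb{E}[G_s^{(j)} G_t^{(a)}]$ over an interval of length $1/n$; Assumption \ref{multislowlyvarying} together with $\delta^{(a)}\in(-1/2,0)$ yields a bound of order $n^{\delta^{(a)}-1/2}$ per term, so summing the $[nt_l]$ terms and dividing by $\sqrt{n}$ produces the overall bound $O(n^{\delta^{(a)}})\to 0$. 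This estimate is the heart of the argument and relies crucially on $\delta^{(a)}<0$, mirroring the role played by Assumption \ref{lesszero} in the univariate case.

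Having established that the limiting covariance matrix is block-diagonal, the limit of $V_n$ is a centered Gaussian vector in which the Gaussian-core block and the scaled Brownian-motion block are uncorrelated, hence (being jointly Gaussian) independent. Joint Gaussianity of the limit of the $S_n$-block is already guaranteed by Theorem \ref{multiGcore}, while the $G$-block is fixed in $n$. Combining the finite-dimensional convergence with the joint tightness yields functional convergence in $\mathcal{D}([0,T])^3$, with the limiting Brownian motion $B$ independent of $(G^{(1)}-G_0^{(1)}, G^{(2)}-G_0^{(2)})$, as required.
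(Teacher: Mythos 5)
Your proposal follows essentially the same route as the paper's proof: marginal tightness gives joint tightness, the multivariate fourth-moment theorem handles the finite-dimensional distributions, and the decisive step is the vanishing of the first-chaos cross-covariance between the Gaussian cores and $S_n$, which both you and the paper reduce to an estimate of order $n^{\delta^{(a)}}\to0$ coming from $\sqrt{n}\,\tau_n^{(a)}\sim n^{-\delta^{(a)}}$. The only caveat is that your uniform per-term bound $n^{\delta^{(a)}-1/2}$ fails for the $O(1)$ near-diagonal increments, where the cross-covariance function is not Lipschitz; the paper sidesteps this by telescoping the double sum of second differences of $R^{(i,j)}$ exactly, and your argument is repaired the same way without affecting the final rate.
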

Note that in our settings, we already denote the $\sigma$-algebra generated by the process $(G_t^{(1)}-G_0^{(1)},G_t^{(2)}-G_0^{(2)})$ by $\mathcal{F}$. Hence we can easily get a stable convergence version of Theorem \ref{multiweakconvergence}. By applying the blocking technique again, we will have the key result below.\\
\begin{thm}[Central limit theorem]\label{Bi_CLT1} 
Under Assumption \ref{multislowlyvarying}, \ref{multivolatility} and \ref{multikernelmeasure}, if $\sigma^{(1)}$ and $\sigma^{(2)}$ are $\mathcal{F}$-measurable, we have
\begin{eqnarray}
&&\left(\frac{1}{\sqrt{n}}\sum_{i=1}^{[nt]}p\left(\frac{\Delta_i^nX^{(1)}}{\tau_n^{(1)}}\right)p\left(\frac{\Delta_i^nX^{(2)}}{\tau_n^{(2)}}\right)-\sqrt{n}\mathbb{E}\left[h\left(\frac{\Delta_1^nG^{(1)}}{\tau_n^{(1)}},\frac{\Delta_1^nG^{(2)}}{\tau_n^{(2)}}\right)\right]\int_0^t\sigma_s^{(1)}\sigma_s^{(2)}ds\right)_{0\le t\le T}\\
&&\overset{\mathcal{L}}{\rightarrow}\left(\sqrt{\beta}\int_0^t\sigma_s^{(1)}\sigma_s^{(2)}dB_s\right)_{0\le t\le T}
\end{eqnarray}
in $\mathcal{D}([0,T])$, where $B$ is a Brownian motion independent of $\sigma^{(1)}$ and $\sigma^{(2)}$.
\end{thm}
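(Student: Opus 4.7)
The plan is to transfer the stable CLT for the Gaussian-core semicovariance $S_n$ (the stable convergence version of Theorem \ref{multiweakconvergence}) to the BSS process via the blocking technique of \cite{BN2011,AA2019}. The key structural observation is that $p(x)=x\mathbf{1}_{\{x\ge 0\}}$ is positively homogeneous of degree one and $\sigma^{(k)}\ge 0$; after freezing $\sigma^{(k)}$ at the left endpoint of a short block $((j-1)/b_n,\,j/b_n]$ and using the local approximation $\Delta_i^n Y^{(k)}\approx \sigma^{(k)}_{(j-1)/b_n}\,\Delta_i^n G^{(k)}$ for $i/n$ in that block, we obtain
\begin{equation*}
p\!\left(\tfrac{\Delta_i^n Y^{(1)}}{\tau_n^{(1)}}\right)p\!\left(\tfrac{\Delta_i^n Y^{(2)}}{\tau_n^{(2)}}\right)\approx \sigma^{(1)}_{(j-1)/b_n}\sigma^{(2)}_{(j-1)/b_n}\,h\!\left(\tfrac{\Delta_i^n G^{(1)}}{\tau_n^{(1)}},\tfrac{\Delta_i^n G^{(2)}}{\tau_n^{(2)}}\right),
\end{equation*}
with $b_n\to\infty$ at a rate to be fixed by the error analysis below.

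\textbf{Assembling the limit.} Centring by $\mathbb{E}[h(\cdot,\cdot)]$, scaling by $1/\sqrt{n}$ and summing over $i/n$ within a block followed by summation over blocks turns the frozen approximation into
\begin{equation*}
\sum_{j=1}^{\lfloor tb_n\rfloor}\sigma^{(1)}_{(j-1)/b_n}\sigma^{(2)}_{(j-1)/b_n}\bigl(S_n(j/b_n)-S_n((j-1)/b_n)\bigr).
\end{equation*}
Since the $\sigma^{(k)}$ are $\mathcal{F}$-measurable and $S_n\overset{st.}{\rightarrow}\sqrt{\beta}B$ with $B$ independent of $\mathcal{F}$, stable convergence gives, upon sending $n\to\infty$, the Riemann--Stieltjes-type sum $\sqrt{\beta}\sum_j\sigma^{(1)}_{(j-1)/b_n}\sigma^{(2)}_{(j-1)/b_n}(B_{j/b_n}-B_{(j-1)/b_n})$; letting $b_n\to\infty$ and using continuity of $\sigma^{(k)}$ (Assumption \ref{multivolatility}) then produces the It\^{o} integral $\sqrt{\beta}\int_0^t\sigma_s^{(1)}\sigma_s^{(2)}\,dB_s$. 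The deterministic drift subtracted in the statement arises as the Riemann-sum approximation $\tfrac{1}{\sqrt{n}}\sum_i\sigma^{(1)}_{i/n}\sigma^{(2)}_{i/n}\,\mathbb{E}[h(\cdot,\cdot)]\approx\sqrt{n}\,\mathbb{E}[h(\cdot,\cdot)]\int_0^t\sigma_s^{(1)}\sigma_s^{(2)}\,ds$ of the centring term.

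\textbf{Error control and main obstacle.} Two error terms must vanish in probability, uniformly in $t\in[0,T]$: the H\"older error from replacing $\sigma^{(k)}_{i/n}$ by its block value, of order $b_n^{-\eta}$ per increment and absorbed thanks to $\eta^{(k)}>1/2$ (Assumption \ref{multivolatility}); and the kernel-leakage error from replacing $\Delta_i^n Y^{(k)}$ by $\sigma^{(k)}_{(j-1)/b_n}\Delta_i^n G^{(k)}$, controlled via Assumption \ref{multikernelmeasure} ($\lambda<-1$) in the spirit of Lemma~1 of \cite{BN2011}. The main obstacle is the bivariate nature of these bounds: the product $h(x,y)=xy\,\mathbf{1}_{\{x\ge 0,\,y\ge 0\}}$ is non-even and discontinuous on the coordinate axes, so the Hermite-expansion moment bounds available for power variations do not apply directly, and a straightforward Taylor expansion is unavailable. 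The remedy is to use positive homogeneity of $p$ to cancel the leading volatility factor cleanly, and to apply Lemma \ref{lemma1} with $p=4,\ \alpha=p,\ m=1,\ \nu=2$ (in conjunction with the decimation technique of \cite{BS2018} already invoked in Theorem \ref{multiGcore}) to produce the fourth-moment increment bound $\|S_n(t)-S_n(s)\|_{L^4}\le C\bigl((\lfloor nt\rfloor-\lfloor ns\rfloor)/n\bigr)^{1/2}$ that dominates both errors after balancing the blocking scale $b_n$ against the truncation scale $\epsilon_n$ permitted by Assumptions \ref{multislowlyvarying} and \ref{multikernelmeasure}.
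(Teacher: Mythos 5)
Your proposal follows essentially the same route as the paper's proof: positive homogeneity of $p$ together with non-negativity of $\sigma^{(k)}$ to factor the frozen volatility out of the semicovariance, a blocking decomposition whose main block converges stably (via the independence of $B$ from $\mathcal{F}$ established in Theorem \ref{multiweakconvergence}, with $\sigma^{(1)},\sigma^{(2)}$ being $\mathcal{F}$-measurable) to the It\^{o} integral, H\"older continuity of the volatility to kill the freezing error, and the univariate $L^2$ estimates of Lemma 1 in \cite{BN2011} for the approximation $\Delta_i^nY^{(k)}\approx\sigma^{(k)}_{(i-1)/n}\Delta_i^nG^{(k)}$. The only substantive deviations are in your error-control paragraph: the $L^4$ increment bound obtained from Lemma \ref{lemma1} and the decimation technique enters the paper's argument only through the tightness of $S_n$ (hence the stable convergence of the main block $C_t^{n,l}$), not as a dominating bound for the two error terms --- the approximation error $A_t^n$ is handled by a first-moment argument using $|p(a)-p(b)|\le|a-b|$, the identity $p(a)p(b)-p(c)p(d)=\tfrac{1}{2}[(p(a)-p(c))(p(b)+p(d))+(p(b)-p(d))(p(a)+p(c))]$ and Cauchy--Schwarz, while the block-freezing error $B_t^{n,l}$ is handled by the weighted-random-sums result of \cite{CN2014} --- and the paper takes the iterated limit $\lim_{l\to\infty}\limsup_{n\to\infty}$ with a \emph{fixed} number of blocks $l$ rather than a simultaneous rate $b_n\to\infty$, which is what makes the stable-convergence step for finitely many block increments legitimate without any rate balancing.
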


Finally, the above central limit theorem for $p(x)=x1_{\{x\ge0\}}$ can be generalised to a larger class of functions.
\begin{thm}\label{GBi_CLT}
Let $q\ge1$, $\phi(x)=|x|^qI(x)$ where $I(x)=1,1_{\{x\ge0\}}$ or $1_{\{x\le0\}}$ and define $\Phi(x,y):=\phi(x)\phi(y)$. Assume that Assumptions \ref{multislowlyvarying}, \ref{multivolatility} and \ref{multikernelmeasure} hold. If $\sigma^{(1)}$ and $\sigma^{(2)}$ are $\mathcal{F}$-measurable, we have 
\begin{eqnarray}
&&\left(\frac{1}{\sqrt{n}}\sum_{i=1}^{[nt]}\phi\left(\frac{\Delta_i^nX^{(1)}}{\tau_n^{(1)}}\right)\phi\left(\frac{\Delta_i^nX^{(2)}}{\tau_n^{(2)}}\right)-\sqrt{n}\mathbb{E}\left[\Phi\left(\frac{\Delta_1^nG^{(1)}}{\tau_n^{(1)}},\frac{\Delta_1^nG^{(2)}}{\tau_n^{(2)}}\right)\right]\int_0^t\left(\sigma_s^{(1)}\sigma_s^{(2)}\right)^qds\right)_{0\le t\le T}\\
&&\overset{\mathcal{L}}{\rightarrow}\left(\sqrt{\beta}\int_0^t\left(\sigma_s^{(1)}\sigma_s^{(2)}\right)^qdB_s\right)_{0\le t\le T}
\end{eqnarray}
in $\mathcal{D}([0,T])$, where $B$ is a Brownian motion independent of $\sigma^{(1)}$ and $\sigma^{(2)}$.
\end{thm}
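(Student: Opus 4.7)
The plan is to follow the same three-step route that produced Theorem~\ref{Bi_CLT1}, with the bivariate kernel $p(x)p(y)$ replaced by $\Phi(x,y)=\phi(x)\phi(y)$. The identity that makes the generalisation go through is the \emph{positive homogeneity} of $\phi$: because $|cx|^q=c^q|x|^q$ and each admissible $I$ satisfies $I(cx)=I(x)$ for every $c\ge 0$, we have $\phi(cx)=c^q\phi(x)$ for all $c\ge 0$, $x\in\mathbb{R}$. Since $\sigma^{(1)},\sigma^{(2)}\ge 0$, this identity is what lets the locally frozen volatility factor be extracted cleanly from $\phi$ once the BSS increment has been approximated by the Gaussian-core increment.

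First, I would extend Theorems~\ref{multiGcore} and \ref{multiweakconvergence} from the specific kernel $h$ to $\Phi$. Since $\phi\in L^2(\mathbb{R},\gamma)$ for every $q\ge 1$, $\Phi$ admits a convergent chaos expansion whose structural form mirrors that of $h$; the Hermite rank of $\Phi-\mathbb{E}[\Phi]$ is at least $2$ when $I\equiv 1$ (then $\phi$ is even and the rank-$1$ terms vanish) and at least $1$ otherwise. Under Assumption~\ref{multislowlyvarying} the cross-correlations $r_{a,b}^{(n)}(\cdot)$ admit a summable dominant sequence exactly as in the univariate case, so the finite-dimensional convergence via Theorem~\ref{MGFMT} and the tightness argument of Theorem~\ref{multiGcore} (Lemma~\ref{lemma1} combined with the decimation technique of \cite{BS2018}) carry over unchanged, yielding
$$S_n^{\Phi}(t):=\frac{1}{\sqrt n}\sum_{k=1}^{[nt]}\bigl(\Phi(\bm{X}_k^n)-\mathbb{E}[\Phi(\bm{X}_k^n)]\bigr)\overset{\mathcal{L}}{\longrightarrow}\sqrt\beta\,B_t$$
with $\beta=\beta(\phi,\rho)$ a finite constant depending on the Hermite coefficients of $\phi$ and on the limits of the cross-correlations. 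Joint convergence with $(G^{(1)},G^{(2)})$ together with Proposition~2 of \cite{AE1978} upgrades this to stable convergence with respect to $\mathcal{F}$.

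Second, I would apply the blocking technique exactly as in the proof of Theorem~\ref{Bi_CLT1}. Partition $[0,t]$ into blocks of length $k_n/n$ with $k_n\to\infty$ and $k_n/n\to 0$, freeze $(\sigma^{(1)},\sigma^{(2)})$ at the left endpoint $\ell_n(i)/n$ of the block containing index $i$, and decompose $\Delta_i^n X^{(j)}=\sigma^{(j)}_{\ell_n(i)/n}\,\Delta_i^n G^{(j)}+R_{i,n}^{(j)}$. By positive homogeneity the leading part of each summand is
$$\bigl(\sigma^{(1)}_{\ell_n(i)/n}\sigma^{(2)}_{\ell_n(i)/n}\bigr)^{\!q}\,\Phi\!\left(\frac{\Delta_i^n G^{(1)}}{\tau_n^{(1)}},\frac{\Delta_i^n G^{(2)}}{\tau_n^{(2)}}\right),$$
to which the stably convergent result from step one can be applied block by block. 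A standard Riemann-sum approximation, enabled by the H\"older regularity $\eta^{(i)}>1/2$ of Assumption~\ref{multivolatility}, then identifies the limit as $\sqrt\beta\int_0^t(\sigma^{(1)}_s\sigma^{(2)}_s)^q\,dB_s$, with $B$ independent of $\sigma^{(1)},\sigma^{(2)}$.

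The main obstacle is showing that the replacement error stemming from $R_{i,n}^{(j)}$ is asymptotically negligible after multiplication by the CLT rate $\sqrt n$. Since $\phi$ need not be globally $C^1$ when $I$ is an indicator, the usual mean-value argument must be replaced by the elementary inequality
$$|\phi(x)-\phi(y)|\le C\bigl(1+|x|^{q-1}+|y|^{q-1}\bigr)|x-y|,\qquad x,y\in\mathbb{R},$$
which is obtained by inspecting the three cases $x,y\ge 0$, $x,y\le 0$, and $xy<0$ separately (the last case exploits $|x|,|y|\le|x-y|$). Combining this with Gaussian moment bounds for $\Delta_i^n G^{(j)}/\tau_n^{(j)}$, with the H\"older continuity of $\sigma^{(j)}$, and with the kernel-tail decay $\pi_n^{(i)}((\epsilon_n,\infty))=O(n^{\lambda(1-\kappa)})$ from Assumption~\ref{multikernelmeasure}, one bounds the accumulated $L^2$ error on the order of $n^{1/2-\eta^{(i)}}$ times a vanishing factor, which tends to zero. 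Once this bound is in hand, the assembly mimics the proof of Theorem~\ref{Bi_CLT1} line by line.
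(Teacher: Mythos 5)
Your proposal follows essentially the same route as the paper: exploit the positive homogeneity $\phi(cx)=c^q\phi(x)$ for $c\ge0$ to carry the blocking decomposition of Theorem \ref{Bi_CLT1} over verbatim with $\sigma^{(j)}$ replaced by $\sigma^{(j)q}$, note that square-integrability of $\Phi$ under Gaussian measures lets the Gaussian-core limit theorems apply unchanged, and control the replacement term via the inequality $\left||x|^qI(x)-|y|^qI(y)\right|\le q|x-y|\left(|x|^{q-1}+|y|^{q-1}\right)$ together with the moment bounds of Lemma 1 in \cite{BN2011}. This matches the paper's argument in all essentials, so no further comparison is needed.
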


\section{Discussion and outlook}
We have established a CLT for the realised semicovariance of a bivariate BSS process which extends the work by \cite{AA2019} and \cite{RA2019} to a non-linear function of increments and can be seen as a non-semimartingale extension of the work by \cite{TB2020}. Moreover, our generalisation from semicovariances to more general functionals provides a more comprehensive multidimensional theory of (semi-)power covariation of BSS processes. Our work can be extended in various directions: For example, we could investigate whether the asymptotic properties of semicovariances still hold when we add jump terms into our BSS processes. Furthermore, since we have noticed that the calculation of our (normalised) realised semicovariance relies on the scaling factors $\tau_n^{(i)},\ i=1,2$, which are unknown from empirical data, this raises the question of whether it is possible to derive feasible covolatility estimators without using the scaling factors $\tau_n^{(i)}$, for instance, by considering suitable ratio statistics.\\

\section{Proofs}
By applying a standard localisation procedure in \cite{BN2006}, our volatility process $\sigma$ can be assumed to be bounded on compact intervals because it is c\`adl\`ag \citep{BN2011}.
\begin{proof}[Proof of Theorem \ref{weakGcore}]
This is a special case of Theorem 1.1 in \cite{ID2018} by letting $\varphi(x)=f(x)-\frac{1}{2}=x^21_{\{x\ge0\}}-\frac{1}{2}$. By applying their results, we prove the weak convergence of finite-dimensional distributions and the tightness of the sequence $\sqrt{n}(V(G,f)_t^n-\frac{1}{2}t)$.
\end{proof}

\begin{proof}[Proof of Theorem \ref{multiclt}]
Let $(c_l,b_l],\ l=1,...,e,$ be disjoint intervals contained in $[0,T]$ and define
$$Z_n^l=\sum_{k=1}^\infty I_k(f_k^{(n,l)}),$$
where $$f_k^{(n,l)}=\frac{a_k}{\sqrt{n}}\sum_{i=[nc_l]+1}^{[nb_l]}\left(\frac{\Delta_i^nG}{\tau_n}\right)^{\otimes k},$$
and 
$$G_n^l=\tau_n\sum_{i=[nc_l]+1}^{[nb_l]}\frac{\Delta_i^nG}{\tau_n},$$
and $\{a_k\}_{k\ge1}$ are Hermitte coefficients of function $g(x)=x^21_{\{x\ge0\}}-\frac{1}{2}$.\\
It suffices to show that
$$(G_n^l,Z_n^l)_{1\le l\le e}\overset{\mathcal{L}}{\rightarrow}(G_{b_l}-G_{c_l},\sqrt{\beta}(B_{b_l}-B_{c_l}))_{1\le l\le e}.$$
For the first component, it is obvious that 
$$G_n^l\overset{a.s.}\rightarrow G_{b_l}-G_{c_l},\ n\rightarrow\infty,$$
and the convergence of the second component follows directly from {Theorem \ref{weakGcore}}.\\
It remains to look at the covariance between the two components.\\
For fixed $l$, by Lemma 1 in \cite{BN2009}, we have $\mathbb{E}[(Z_n^l)^2]\le C_1$
and $\mathbb{E}[(G_n^l)^2]\le C_2$,
then $$|\mathbb{E}[G_n^lZ_n^l]|\le\sqrt{C_1C_2}.$$
Thus
\begin{eqnarray*}
\mathbb{E}[G_n^lZ_n^l]&=&\mathbb{E}\left[Z_n^l\cdot\left(\sum_{j=[nc_l]+1}^{[nb_l]}\Delta_j^nG\right)\right]\\
&=&\mathbb{E}\left[\left(\sum_{k=1}^\infty\frac{a_k}{\sqrt{n}}\sum_{i=[nc_l]+1}^{[nb_l]}H_k\left(\frac{\Delta_i^nG}{\tau_n}\right)\right)\cdot \left(\sum_{j=[nc_l]+1}^{[nb_l]}\Delta_j^nG\right)\right]\\
&\overset{BCT}{=}&\sum_{k=1}^\infty\frac{a_k}{\sqrt{n}}\sum_{i=[nc_l]+1}^{[nb_l]}\mathbb{E}\left[H_k\left(\frac{\Delta_i^nG}{\tau_n}\right)\cdot \left(\sum_{j=[nc_l]+1}^{[nb_l]}\Delta_j^nG\right)\right]\\
&\overset{a_1=\sqrt{\frac{2}{\pi}}}{=}&\frac{a_1}{\sqrt{n}}\sum_{i=[nc_l]+1}^{[nb_l]}\mathbb{E}\left[H_1\left(\frac{\Delta_i^nG}{\tau_n}\right)\cdot \left(\sum_{j=[nc_l]+1}^{[nb_l]}\Delta_j^nG\right)\right]\\
&=&\frac{a_1\tau_n}{\sqrt{n}}\sum_{i=[nc_l]+1}^{[nb_l]}\sum_{j=[nc_l]+11}^{[nb_l]}\mathbb{E}\left[\frac{\Delta_i^nG}{\tau_n}\frac{\Delta_j^nG}{\tau_n}\right]\\
&=&\frac{a_1\tau_n}{\sqrt{n}}\sum_{i=[nc_l]+1}^{[nb_l]}\sum_{j=[nc_l]+1}^{[nb_l]}r_n(|i-j|).\\
\end{eqnarray*}
Then for the equation above, w.l.o.g., we let $c_l=0,b_l=1$, then we have 
\begin{eqnarray}
\mathbb{E}[G_n^lZ_n^l]=\frac{a_1\tau_n}{\sqrt{n}}\sum_{i=[nc_l]+1}^{[nb_l]}\sum_{j=[nc_l]+1}^{[nb_l]}r_n(|i-j|)&=&\frac{a_1\tau_n}{\sqrt{n}}\sum_{i=1}^{n}\sum_{j=1}^{n}r_n(|i-j|)\label{gsumm}\\
&=&\frac{a_1\tau_n}{\sqrt{n}}(n+2\sum_{i=1}^n(n-i)r_n(i)).\label{summ}
\end{eqnarray}
Recall 
$$R(t)=\mathbb{E}[(G_t-G_0)^2],\ t\ge0,\ \ r_n(i)=\frac{R(\frac{i+1}{n})-2R(\frac{i}{n})+R(\frac{i-1}{n})}{2R(\frac{1}{n})},\ i\in\mathbb{N},$$
then we have
\begin{eqnarray*}
\mathbb{E}[G_n^lZ_n^l]&=&\frac{a_1\tau_n}{\sqrt{n}}(n+2\sum_{i=1}^n(n-i)r_n(i))\\
&=&\frac{a_1\tau_n}{\sqrt{n}}\left(n+2\sum_{i=1}^n(n-i)\frac{R(\frac{i+1}{n})-2R(\frac{i}{n})+R(\frac{i-1}{n})}{2R(\frac{1}{n})}\right)\\
&=&\frac{a_1\tau_n}{\sqrt{n}}\left(n+\frac{R(1)-nR(\frac{1}{n})}{R(\frac{1}{n})}\right)\\
&=&\frac{a_1R(1)}{\sqrt{nR(\frac{1}{n})}}.
\end{eqnarray*}
Then the limit depends on the behaviour of the function $R$ near $0$. Since
$$R(t)=t^{2\alpha+1} L_0(t),\ \text{for }\alpha<0,$$
it follows that
$$\frac{R(t)}{t}=t^{2\alpha}L_0(t)\rightarrow\infty,\ t\rightarrow0,$$
(since $\alpha<0$). Then
$$\lim_{n\rightarrow\infty}\mathbb{E}[G_n^lZ_n^l]=0.$$
The central limit theorem for $Z_n^l$ is basically derived from the fourth moment theorem and its generalisation Theorem \ref{GFMT}. In order to apply Theorem \ref{MGFMT} to the multivariate random variable $\left(G_n^l,Z_n^l\right)$ with chaos representation $\left(G_n^l,Z_n^l\right)=\left(I_1(g_1^{(n,l)}),\sum_{k=1}^\infty I_k(f_k^{(n,l)})\right)$, where $g_1^{(n,l)}=\tau_n\sum_{i=[nc_l]+1}^{[nb_l]}u_i^{(n)}$ for $u_i^{(n)}\in\mathcal{H}$ and $\|u_i^{(n)}\|_\mathcal{H}=1$, it only remains to verify condition (ii) for the terms including $u_i^{(n)}$. Notice that
$$\lim_{n\rightarrow\infty}\mathbb{E}[I_1(g_1^{(n,l)})I_1(g_1^{(n,l)})]=R(b_l-c_l),$$
and
$$\lim_{n\rightarrow\infty}\mathbb{E}[I_1(g_1^{(n,l)})I_1(f_1^{(n,l)})]=0$$
by the above discussion, we have that $\left(G_n^l,Z_n^l\right)\xrightarrow[n\rightarrow\infty]{\mathcal{L}}(G_{b_l}-G_{c_l},\sqrt{\beta}(B_{b_l}-B_{c_l})\sim N(0,\Sigma)$, where $\Sigma$ is a $2\times 2$-matrix with $0$ off-diagonal elements and $\beta$ is a constant given by Theorem \ref{weakGcore}. Since $G_{b_l}-G_{c_l}$ and $\sqrt{\beta}(B_{b_l}-B_{c_l})$ are jointly Gaussian and uncorrelated, they are mutually independent. Subsequently, we have the convergence of finite-dimensional distributions
$$(G_n^l,Z_n^l)_{1\le l\le e}\overset{\mathcal{L}}\rightarrow(G_{b_l}-G_{c_l},\sqrt{\beta}(B_{b_l}-B_{c_l})_{1\le l\le e}.$$
Since the tightness of each component has been proved in the previous theorem, the tightness of the bivariate process follows. Finally, we have 
$$(G_t,\sqrt{n}(V(G,f)_t^n-\frac{1}{2}t))\overset{\mathcal{L}}{\rightarrow}(G_t,\sqrt\beta B_t).$$
\end{proof}

\begin{proof}[Proof of Theorem \ref{clt}]
This theorem is a direct consequence of the Theorem \ref{Bi_CLT1} since the realised semivariance is a special case of the realised semicovariance, and the proofs will be identical by omitting superscripts.
\end{proof}

\begin{proof}[Proof of Theorem \ref{multiGcore}]
Let $$\theta^{(n)}(j)=\max_{1\le i,l\le2}|r_{i,l}^{(n)}(j)|,$$
$$K=\inf_{k\in \mathbb{N}}\{\theta^{(n)}(j)\le\frac{1}{2},\forall |j|\ge k\},$$
$$\theta^{(n)}=\sum_{j\in\mathbb{Z}}\theta^{(n)}(j),$$
$$\gamma_{n,m,e}=\sqrt{2\theta^{(n)}n^{-1}\sum_{|j|\le n}\theta^{(n)}(j)^e\sum_{|j|\le n}\theta^{(n)}(j)^{m-e}}.$$
By Theorem 3.1 in \cite{AA2019}, Assumption \ref{multislowlyvarying} leads to 
$$\sup_{n\ge1}\theta^{(n)}<\infty.$$
It is easy to verify that the conditions in Theorem 2.2 of \cite{NP2011} hold.\\
Let $(c_l,b_l],\ l=1,...,e$ be disjoint intervals contained in $[0,T]$. Define
$$S_n^l:=\frac{1}{\sqrt{n}}\sum_{i=[nc_l]+1}^{[nb_l]}(h(X_i^n)-\mathbb{E}[h(X_i^n)])=\sum_{k=1}^\infty I_k(f_k^{(n,l)}),$$
where $$f_k^{(n,l)}=\frac{1}{\sqrt{n}}\sum_{i=[nc_l]+1}^{[nb_l]}\sum_{t\in\{1,2\}^k}b_t^nu_{i,t_1}^n\otimes\cdots\otimes u_{i,t_k}^n.$$ For the finite-dimensional convergence, it remains to prove that for any $1\le l_1\neq l_2\le e$, 
$$\lim_{n\rightarrow\infty}\langle f_k^{(n,l_1)},f_k^{(n,l_2)}\rangle_{\mathbb{H}^{\otimes k}}=0.$$
For $l_1<l_2$, we have
\begin{eqnarray*}
|\langle f_k^{(n,l_1)},f_k^{(n,l_2)}\rangle_{\mathbb{H}^{\otimes k}}|&=&\frac{1}{n}\left|\langle\sum_{i=[nc_{l_1}]+1}^{[nb_{l_1}]}\sum_{t\in\{1,2\}^k}b_tu_{i,t_1}^n\otimes\cdots\otimes u_{i,t_k}^n,\sum_{i=[nc_{l_2}]+1}^{[nb_{l_2}]}\sum_{t\in\{1,2\}^k}b_tu_{i,t_1}^n\otimes\cdots\otimes u_{i,t_k}^n\rangle_{\mathbb{H}^{\otimes k}}\right|\\
&\le&\frac{(\sum_{t\in\{1,2\}^k}|b_t^n|)^2}{n}\sum_{i=[nc_{l_1}]+1}^{[nb_{l_1}]}\sum_{j=[nc_{l_2}]+1}^{[nb_{l_2}]}\theta^{(n)}(j-i)^k\\
&\le&\frac{(\sum_{t\in\{1,2\}^k}|b_t^n|)^2}{n}\sum_{i=[nc_{l_1}]+1}^{[nb_{l_1}]}\sum_{j=[nc_{l_2}]+1}^{[nb_{l_2}]}\theta^{(n)}(j-i).\\
\end{eqnarray*}
Assume w.l.o.g. that $c_{l_1}=0,b_{l_1}=c_{l_2}=1, b_{l_2}=2$. Then 
$$|\langle f_k^{(n,l_1)},f_k^{(n,l_2)}\rangle_{\mathbb{H}^{\otimes k}}|\le (\sum_{t\in\{1,2\}^k}|b_t^n|)^2\left(\frac{1}{n}\sum_{j=1}^nj\theta^{(n)}(j)+\sum_{j=1}^{n-1}\theta^{(n)}(n+j)\right)\rightarrow0,$$
since $\sup_{n\ge1}\sum_{j=1}^\infty\theta^{(n)}(j)<\infty$ by our assumption. (We can find a dominant sequence such that $1\ge\theta(j)\ge\sup_{n\ge1}\theta^{(n)}(j)$ and rewrite our assumption as $\theta:=\sum_{j\in\mathbb{Z}}\theta(j)<\infty$.)\\ 
Hence the convergence of finite-dimensional distributions holds. Next, we prove the tightness.\\
We initially ignore the issue about the $\epsilon$-standard property and apply Lemma \ref{lemma1} to prove the tightness of $\{S_n(t)\}_{0\le t\le T}$.\\
For a fixed $n$ and $0\le s<t\le T$, let $N=[nt]-[ns]$ and define 
$$f_{j,i,N}(\bm{X}_{i}):=h(\bm{X}_{i})-\mathbb{E}[h(\bm{X}_{i})],\ 1\le j\le 4,\ [ns]+1\le i\le [nt].$$
Thus each $f_{j,i,N}$ has a Hermite rank 1, i.e., $m=1$.
\begin{eqnarray*}
\mathbb{E}[|S_n(t)-S_n(s)|^4]&=&\frac{1}{n^2}\mathbb{E}\left[\left(\sum_{i=[ns]+1}^{[nt]}h(\bm{X}_{i})-\mathbb{E}[h(\bm{X}_{i})]\right)^4\right]\\
&=& \frac{1}{n^2}\mathbb{E}\left[\left(\sum_{i=[ns]+1}^{[nt]}f_{j,i,N}(\bm{X}_{i})\right)^4\right]\\
&\le&\frac{1}{n^2}(\Sigma_4+\Sigma_3+\Sigma_{2,1}+\Sigma_{2,2}),
\end{eqnarray*}
where 
$$\Sigma_4=\sum{}'\mathbb{E}|[f_{1,t_1,N}(\bm{X}_{t_1})\cdots f_{4,t_4,N}(\bm{X}_{t_4})]|,$$
$$\Sigma_3=\sum{}'\mathbb{E}|[f_{1,t_1,N}(\bm{X}_{t_1})f_{2,t_2,N}(\bm{X}_{t_2})f_{3,t_3,N}^2(\bm{X}_{t_3})]|,$$
$$\Sigma_{2,1}=\sum{}'\mathbb{E}|[f_{1,t_1,N}^3(\bm{X}_{t_1})f_{2,t_2,N}(\bm{X}_{t_2})|,$$
$$\Sigma_{2,2}=\sum\mathbb{E}|[f_{1,t_1,N}^2(\bm{X}_{t_1})f_{2,t_2,N}^2(\bm{X}_{t_2})|,$$
where $\sum{}′$ stands for the sum over all different integers $[ns]+1\le t_p\le[nt]$.\\
Since we have proved that all fourth-order moments of $h(\bm{X}_i)$ can be controlled by a constant, we have
$$\Sigma_{2,1}+\Sigma_{2,2}\le C_1\sum{}'1+C_2\sum1\le C'([nt]-[ns])^2.$$
Next, we seek to control $$Q_n=\max_{[ns]+1\le n_1\le [nt]}\sum_{[ns]+1\le n_2\le [nt],n_2\neq n_1}\max_{1\le u,v\le2}|\mathbb{E}X_{n_2}^{(u)}X_{n_1}^{(v)}|.$$
We use the notation we have defined before, $\theta^{(n)}(n_2-n_1)=\max_{1\le u,v\le2}|\mathbb{E}X_{n_2}^{(u)}X_{n_1}^{(v)}|$ and $\theta^{(n)}=\sum_{j\in\mathbb{Z}}\theta^{(n)}(j)$.\\
It is easy to see that 
$$Q_n\le \theta^{(n)}.$$
Recall our assumption that $\sup_{n\ge1}\theta^{(n)}<\infty$. Hence $Q_n$ can be controlled by a constant, i.e., 
$$Q_n\le C''.$$
Then we use Lemma 3.1 for $p=4,\alpha=4$, we have
$$\Sigma_4=\sum{}'\mathbb{E}|[f_{1,t_1,N}(\bm{X}_{t_1})\cdots f_{4,t_4,N}(\bm{X}_{t_4})]|\le C(\epsilon,p,m,\alpha,\nu)KN^2Q_n^2=C_4([nt]-[ns])^2,$$
and for $p=3,\alpha=2$, we have
$$\Sigma_3=\sum{}'\mathbb{E}|[f_{1,t_1,N}(\bm{X}_{t_1})f_{2,t_2,N}(\bm{X}_{t_2})f_{3,t_3,N}^2(\bm{X}_{t_3})]|\le C(\epsilon,p,m,\alpha,\nu)KN^2Q_n=C_3([nt]-[ns])^2.$$
Altogether, we have
$$\mathbb{E}[|S_n(t)-S_n(s)|^4]\le C\left(\frac{[nt]-[ns]}{n}\right)^2.$$
Finally, we revisit the issue of the $\epsilon$-standard property. We will use the decimation technique introduced in \cite{BS2018} to deal with it.\\	
We divide $[nt]-[ns]$ terms in the sum below
$$S_n(t)-S_n(s)=\frac{1}{\sqrt{n}}\sum_{i=[ns]+1}^{[nt]}h(\bm{X}_{i})-\mathbb{E}[h(\bm{X}_{i})]$$
into $l$ groups where the indices will differ by at least $l$.\\
Define $$T_{n,l}(j)=\frac{1}{\sqrt{n}}\sum_{[ns]<k\le [nt]:k=j(\text{mod }l)}h(\bm{X}_{k})-\mathbb{E}[h(\bm{X}_{k})],$$
then we have
\begin{eqnarray*}
\mathbb{E}[|S_n(t)-S_n(s)|^4]&=&\frac{1}{n^2}\mathbb{E}\left[\left(\sum_{i=[ns]+1}^{[nt]}h(\bm{X}_{i})-\mathbb{E}[h(\bm{X}_{i})]\right)^4\right]\\
&=&\mathbb{E}\left[\left(\sum_{j=0}^{l-1}T_{n,l}(j)\right)^4\right]\\
&\le& l^4\max_{0\le j<l}\mathbb{E}\left[T_{n,l}(j)^4\right].
\end{eqnarray*}
Next, we choose a finite $l$ such that the $\epsilon-$standard condition will be satisfied. Since we have $r_{a,b}^{(k)}\le C(k-1)^{\delta^{(a)}+\delta^{(b)}-1+\epsilon'}$ where $0<\epsilon'<1-\delta^{(a)}-\delta^{(b)}$ is a constant and $C$ is a constant independent of $n$. Then for sufficiently large $l$, we will have $|r_{a,b}^n(k)|\le\epsilon<\frac{1}{\nu p-1}=\frac{1}{7}$ for all $k\ge l$. We shall fix this $l$.\\
Thanks to the previous partition, the difference of indices in each sum $T_{n,l}(j)$ will be at least $l$. Thus for random vectors in each group, they satisfy the $\epsilon-$standard condition.\\
For $\mathbb{E}[T_{n,l}(j)^4]$, we use similar arguments as before and deduce
$$\mathbb{E}[T_{n,l}(j)^4]\le\frac{1}{l^2}C(j)\left(\frac{[nt]-[ns]}{n}\right)^2\ j=0,1,...,l-1,$$
where $C(j)$ are positive constants.\\
Then 
$$\mathbb{E}[|S_n(t)-S_n(s)|^4]\le l^4\max_{0\le j<l}\mathbb{E}[T_{n,l}(j)^4]\le C\left(\frac{[nt]-[ns]}{n}\right)^2,$$
where $C=l^2\max_{0\le j<l}C(j)$.\\
This completes the proof of the tightness.
\end{proof}

\begin{proof}[Proof of Theorem \ref{multiweakconvergence}]
We need to analyse the dependence structure between the three components. \\
Let $(c_l,b_l],\ l=1,...,e$ be disjoint intervals contained in $[0,T]$ and define
$$Z_n^l:=S_n(b_l)-S_n(c_l)$$
and $$G_n^{(i)l}=\tau_n^{(i)}\sum_{j=[nc_l]+1}^{[nb_l]}\frac{\Delta_j^nG^{(i)}}{\tau_n^{(i)}},\ i=1,2.$$
We denote $e_{n,j}^{(i)}:=\frac{\Delta_j^nG^{(i)}}{\tau_n^{(i)}}$. Recall the Hermite decomposition we made in Section 3.1, the term of the first Hermite component of $S_n(b_l)-S_n(c_l)$ is
$$I_1(f_1^{(n)})=\frac{A_n}{\sqrt{n}}\sum_{j=[nc_l]+1}^{[nb_l]}(e_{n,j}^{(1)}+e_{n,j}^{(2)}),$$
where $A_n$ is a generic coefficient.\\
Then 
\begin{eqnarray*}
\mathbb{E}[Z_n^lG_n^{(1)l}]&=&\frac{A_n\tau_n^{(1)}}{\sqrt{n}}\sum_{j=[nc_l]+1}^{[nb_l]}\sum_{i=[nc_l]+1}^{[nb_l]}\mathbb{E}\left[\left(\frac{\Delta_j^nG^{(1)}}{\tau_n^{(1)}}+\frac{\Delta_j^nG^{(2)}}{\tau_n^{(2)}}\right)\left(\frac{\Delta_i^nG^{(1)}}{\tau_n^{(1)}}\right)\right]\\
&=&\frac{A_n\tau_n^{(1)}}{\sqrt{n}}\sum_{j=[nc_l]+1}^{[nb_l]}\sum_{i=[nc_l]+1}^{[nb_l]}\mathbb{E}
\left[\frac{\Delta_j^nG^{(1)}}{\tau_n^{(1)}}\frac{\Delta_i^nG^{(1)}}{\tau_n^{(1)}}\right]\\
&&+\frac{A_n\tau_n^{(1)}}{\sqrt{n}}\sum_{j=[nc_l]+1}^{[nb_l]}\sum_{i=[nc_l]+1}^{[nb_l]}\mathbb{E}\left[\frac{\Delta_j^nG^{(2)}}{\tau_n^{(2)}}\frac{\Delta_i^nG^{(1)}}{\tau_n^{(1)}}\right].
\end{eqnarray*}
Recall the assumption we made
$$R^{(i,j)}(t):=\mathbb{E}[(G_t^{(j)}-G_0^{(i)})^2]=C_{i,j}+\rho_{i,j}t^{\delta^{(i)}+\delta^{(j)}+1}L_0^{(i,j)}(t),$$
where $C_{i,j}=0$ if $i=j$ and $\rho_{i,j}=\rho$ if $i\neq j$, where we assumed that the driving Brownian motions $W_t^{(1)}$ and $W_t^{(2)}$ have constant correlation $\rho$.\\
W.l.o.g., we let $b_l=1,\ c_l=0$, then the first term above is
$$\frac{A_n\tau_n^{(1)}}{\sqrt{n}}\sum_{j=[nc_l]+1}^{[nb_l]}\sum_{i=[nc_l]+1}^{[nb_l]}\frac{\Delta_j^nG^{(1)}}{\tau_n^{(1)}}\frac{\Delta_i^nG^{(1)}}{\tau_n^{(1)}}=\frac{A_nR^{(1,1)}(1)}{\sqrt{nR^{(1,1)}(\frac{1}{n})}},$$
which will converge to $0$ if $\delta^{(1)}<0$.\\
The second term is
\begin{eqnarray*}
&&\frac{A_n\tau_n^{(1)}}{\sqrt{n}}\sum_{j=[nc_l]+1}^{[nb_l]}\sum_{i=[nc_l]+1}^{[nb_l]}\frac{\Delta_j^nG^{(2)}}{\tau_n^{(2)}}\frac{\Delta_i^nG^{(1)}}{\tau_n^{(1)}}=\frac{A_n\tau_n^{(1)}}{\sqrt{n}}\sum_{j=1}^{n}\sum_{i=1}^{n}r_{1,2}^n(j-i)\\
&=&\frac{A_n\tau_n^{(1)}}{\sqrt{n}}\frac{R^{(1,2)}(1)+R^{(2,1)}(1)+(n-1)(R^{(1,2)}(0)+R^{(2,1)}(0))-n(R^{(1,2)}(\frac{1}{n})+R^{(2,1)}(\frac{1}{n}))}{2\tau_n^{(1)}\tau_n^{(2)}}\\
&&+\frac{2n\mathbb{E}[\Delta_1^nG_n^{(1)}\Delta_1^nG_n^{(2)}]}{2\tau_n^{(1)}\tau_n^{(2)}}\\
&=&\frac{A_n\tau_n^{(1)}}{\sqrt{n}}\frac{R^{(1,2)}(1)+R^{(2,1)}(1)+(n-2)\mathbb{E}[(G_0^{(2)}-G_0^{(1)})^2]-n\mathbb{E}[(G_{\frac{1}{n}}^{(2)}-G_{\frac{1}{n}}^{(1)})^2]}{2\tau_n^{(1)}\tau_n^{(2)}}\\
&=&\frac{A_n(\rho L_0^{(1,2)}(1)+\rho L_0^{(2,1)}(1))}{\sqrt{nR^{(2,2)}(\frac{1}{n})}},
\end{eqnarray*}
which converges to $0$ if $\delta^{(2)}<0$.\\
Thus the independence between the Gaussian cores and the limiting Brownian motion has been proved since both of them are Gaussian.\\
\end{proof}

\begin{proof}[Proof of Theorem \ref{Bi_CLT1}]
We denote $\mu_n:=\mathbb{E}\left[h\left(\frac{\Delta_1^nG^{(1)}}{\tau_n^{(1)}},\frac{\Delta_1^nG^{(2)}}{\tau_n^{(2)}}\right)\right]$.\\
Since $\sigma^{(1)},\ \sigma^{(2)}$ are non-negative, we have
$$p(\sigma_s^{(i)})=\sigma_s^{(i)},$$
and
$$p\left(\sigma_s^{(i)}\frac{\Delta_j^nG^{(i)}}{\tau_n^{(i)}}\right)=\sigma_s^{(i)}p\left(\frac{\Delta_j^nG^{(i)}}{\tau_n^{(i)}}\right),\  i=1,2.$$

For a fixed $n$, let $l\le n$, then we have the following decomposition
\begin{eqnarray*}
&&\frac{1}{\sqrt{n}}\sum_{i=1}^{[nt]}p\left(\frac{\Delta_i^nX^{(1)}}{\tau_n^{(1)}}\right)p\left(\frac{\Delta_i^nX^{(2)}}{\tau_n^{(2)}}\right)-\sqrt{n}\mathbb{E}\left[h\left(\frac{\Delta_1^nG^{(1)}}{\tau_n^{(1)}},\frac{\Delta_1^nG^{(2)}}{\tau_n^{(2)}}\right)\right]\int_0^t\sigma_s^{(1)}\sigma_s^{(2)}ds\\
&=&\frac{1}{\sqrt{n}}\sum_{i=1}^{[nt]}p\left(\frac{\Delta_i^nX^{(1)}}{\tau_n^{(1)}}\right)p\left(\frac{\Delta_i^nX^{(2)}}{\tau_n^{(2)}}\right)-\sqrt{n}\mu_n\int_0^t\sigma_s^{(1)}\sigma_s^{(2)}ds
\\
&=&\frac{1}{\sqrt{n}}\sum_{i=1}^{[nt]}\left(p\left(\frac{\Delta_i^nX^{(1)}}{\tau_n^{(1)}}\right)p\left(\frac{\Delta_i^nX^{(2)}}{\tau_n^{(2)}}\right)-p\left(\sigma^{(1)}_{(i-1)/n}\frac{\Delta_i^nG^{(1)}}{\tau_n^{(1)}}\right)p\left(\sigma^{(2)}_{(i-1)/n}\frac{\Delta_i^nG^{(2)}}{\tau_n^{(2)}}\right)\right)\\
&&+\frac{1}{\sqrt{n}}\sum_{i=1}^{[nt]}p\left(\sigma^{(1)}_{(i-1)/n}\frac{\Delta_i^nG^{(1)}}{\tau_n^{(1)}}\right)p\left(\sigma^{(2)}_{(i-1)/n}\frac{\Delta_i^nG^{(2)}}{\tau_n^{(2)}}\right)\\
&&-\frac{1}{\sqrt{n}}\sum_{j=1}^{[lt]}\sigma^{(1)}_{(j-1)/l}\sigma^{(2)}_{(j-1)/l}\sum_{i\in I_l(j)}p\left(\frac{\Delta_i^nG^{(1)}}{\tau_n^{(1)}}\right)p\left(\frac{\Delta_i^nG^{(2)}}{\tau_n^{(2)}}\right)\\
&&+\frac{\sqrt{n}}{l}\mu_n\sum_{j=1}^{[lt]}\sigma_{(j-1)/l}^{(1)}\sigma_{(j-1)/l}^{(2)}-\frac{1}{\sqrt{n}}\sum_{i=1}^{[nt]}\sigma_{(i-1)/n}^{(1)}\sigma_{(i-1)/n}^{(2)}\\
&&+\frac{1}{\sqrt{n}}\sum_{j=1}^{[lt]}\sigma^{(1)}_{(j-1)/l}\sigma^{(2)}_{(j-1)/l}\sum_{i\in I_l(j)}p\left(\frac{\Delta_i^nG^{(1)}}{\tau_n^{(1)}}\right)p\left(\frac{\Delta_i^nG^{(2)}}{\tau_n^{(2)}}\right)-\frac{\sqrt{n}}{l}\mu_n\sum_{j=1}^{[lt]}\sigma_{(j-1)/l}^{(1)}\sigma_{(j-1)/l}^{(2)}\\
&&+\frac{1}{\sqrt{n}}\sum_{i=1}^{[nt]}\sigma_{(i-1)/n}^{(1)}\sigma_{(i-1)/n}^{(2)}-\sqrt{n}\mu_n\int_0^t\sigma_s^{(1)}\sigma_s^{(2)}ds,\\
\end{eqnarray*}
where $$I_l(j)=\left\{i:\frac{i}{n}\in\left(\frac{j-1}{l},\frac{j}{l}\right]\right\}.$$
Define
\begin{eqnarray*}
A_t^n:&=&\frac{1}{\sqrt{n}}\sum_{i=1}^{[nt]}\left(p\left(\frac{\Delta_i^nX^{(1)}}{\tau_n^{(1)}}\right)p\left(\frac{\Delta_i^nX^{(2)}}{\tau_n^{(2)}}\right)-p\left(\sigma^{(1)}_{(i-1)/n}\frac{\Delta_i^nG^{(1)}}{\tau_n^{(1)}}\right)p\left(\sigma^{(2)}_{(i-1)/n}\frac{\Delta_i^nG^{(2)}}{\tau_n^{(2)}}\right)\right),\\
B_t^{'n,l}:&=&\frac{1}{\sqrt{n}}\sum_{i=1}^{[nt]}p\left(\sigma^{(1)}_{(i-1)/n}\frac{\Delta_i^nG^{(1)}}{\tau_n^{(1)}}\right)p\left(\sigma^{(2)}_{(i-1)/n}\frac{\Delta_i^nG^{(2)}}{\tau_n^{(2)}}\right)\\
&&-\frac{1}{\sqrt{n}}\sum_{j=1}^{[lt]}\sigma^{(1)}_{(j-1)/l}\sigma^{(2)}_{(j-1)/l}\sum_{i\in I_l(j)}p\left(\frac{\Delta_i^nG^{(1)}}{\tau_n^{(1)}}\right)p\left(\frac{\Delta_i^nG^{(2)}}{\tau_n^{(2)}}\right)\\
&=&\frac{1}{\sqrt{n}}\sum_{i=1}^{[nt]}\sigma^{(1)}_{(i-1)/n}\sigma^{(2)}_{(i-1)/n}p\left(\frac{\Delta_i^nG^{(1)}}{\tau_n^{(1)}}\right)p\left(\frac{\Delta_i^nG^{(2)}}{\tau_n^{(2)}}\right)\\
&&-\frac{1}{\sqrt{n}}\sum_{j=1}^{[lt]}\sigma^{(1)}_{(j-1)/l}\sigma^{(2)}_{(j-1)/l}\sum_{i\in I_l(j)}p\left(\frac{\Delta_i^nG^{(1)}}{\tau_n^{(1)}}\right)p\left(\frac{\Delta_i^nG^{(2)}}{\tau_n^{(2)}}\right)\\
B_t^{''n,l}:&=&\frac{\sqrt{n}}{l}\mu_n\sum_{j=1}^{[lt]}\sigma_{(j-1)/l}^{(1)}\sigma_{(j-1)/l}^{(2)}-\frac{1}{\sqrt{n}}\sum_{i=1}^{[nt]}\sigma_{(i-1)/n}^{(1)}\sigma_{(i-1)/n}^{(2)},\\
C_{t}^{n,l}:&=&\frac{1}{\sqrt{n}}\sum_{j=1}^{[lt]}\sigma^{(1)}_{(j-1)/l}\sigma^{(2)}_{(j-1)/l}\sum_{i\in I_l(j)}p\left(\frac{\Delta_i^nG^{(1)}}{\tau_n^{(1)}}\right)p\left(\frac{\Delta_i^nG^{(2)}}{\tau_n^{(2)}}\right)-\frac{\sqrt{n}}{l}\mu_n\sum_{j=1}^{[lt]}\sigma_{(j-1)/l}^{(1)}\sigma_{(j-1)/l}^{(2)},\\
D_t^{n}:&=&\frac{1}{\sqrt{n}}\sum_{i=1}^{[nt]}\sigma_{(i-1)/n}^{(1)}\sigma_{(i-1)/n}^{(2)}-\sqrt{n}\mu_n\int_0^t\sigma_s^{(1)}\sigma_s^{(2)}ds.\\
\end{eqnarray*}
We first prove that $$A_t^n\overset{P}{\rightarrow}0,\ n\rightarrow\infty.$$
For function $p(x)=x1_{\{x\ge0\}}$, we have
$$|p(a)-p(b)|\le|a-b|,$$
$$p(a)p(b)-p(c)p(d)=\frac{1}{2}[(p(a)-p(c))(p(b)+p(d))+(p(b)-p(d))(p(a)+p(c))].$$
Moreover, by Lemma 1 in \cite{BN2011}, we have
$$\mathbb{E}\left[\left|\Delta_i^nX^{(j)}\right|\right]\le C\tau_n^{(j)},\ \mathbb{E}\left[\left|\sigma_{(i-1)/n}^{(j)}\Delta_i^nG^{(j)}\right|\right]\le C\tau_n^{(j)}\ i=1,...,[nt],j=1,2.$$
Then 
\begin{eqnarray*}
\mathbb{E}[|A_t^n|]&=&\frac{1}{\sqrt{n}}\mathbb{E}\left[\left|\sum_{i=1}^{[nt]}\left(p\left(\frac{\Delta_i^nX^{(1)}}{\tau_n^{(1)}}\right)p\left(\frac{\Delta_i^nX^{(2)}}{\tau_n^{(2)}}\right)-p\left(\sigma^{(1)}_{(i-1)/n}\frac{\Delta_i^nG^{(1)}}{\tau_n^{(1)}}\right)p\left(\sigma^{(2)}_{(i-1)/n}\frac{\Delta_i^nG^{(2)}}{\tau_n^{(2)}}\right)\right)\right|\right]\\
&\le&\frac{1}{\sqrt{n}}\sum_{i=1}^{[nt]}\mathbb{E}\left|p\left(\frac{\Delta_i^nX^{(1)}}{\tau_n^{(1)}}\right)p\left(\frac{\Delta_i^nX^{(2)}}{\tau_n^{(2)}}\right)-p\left(\sigma^{(1)}_{(i-1)/n}\frac{\Delta_i^nG^{(1)}}{\tau_n^{(1)}}\right)p\left(\sigma^{(2)}_{(i-1)/n}\frac{\Delta_i^nG^{(2)}}{\tau_n^{(2)}}\right)\right|\\
&=&\frac{1}{2\sqrt{n}}\sum_{i=1}^{[nt]}\mathbb{E}\left|\left(p\left(\frac{\Delta_i^nX^{(1)}}{\tau_n^{(1)}}\right)-p\left(\sigma^{(1)}_{(i-1)/n}\frac{\Delta_i^nG^{(1)}}{\tau_n^{(1)}}\right)\right)\left(p\left(\frac{\Delta_i^nX^{(2)}}{\tau_n^{(2)}}\right)+p\left(\sigma^{(2)}_{(i-1)/n}\frac{\Delta_i^nG^{(2)}}{\tau_n^{(2)}}\right)\right)\right.\\
&&+\left.\left(p\left(\frac{\Delta_i^nX^{(2)}}{\tau_n^{(2)}}\right)-p\left(\sigma^{(2)}_{(i-1)/n}\frac{\Delta_i^nG^{(2)}}{\tau_n^{(2)}}\right)\right)\left(p\left(\frac{\Delta_i^nX^{(1)}}{\tau_n^{(1)}}\right)+p\left(\sigma^{(1)}_{(i-1)/n}\frac{\Delta_i^nG^{(1)}}{\tau_n^{(1)}}\right)\right)\right|\\
&\le&\frac{1}{2\sqrt{n}}\sum_{i=1}^{[nt]}\mathbb{E}\left|\left(p\left(\frac{\Delta_i^nX^{(1)}}{\tau_n^{(1)}}\right)-p\left(\sigma^{(1)}_{(i-1)/n}\frac{\Delta_i^nG^{(1)}}{\tau_n^{(1)}}\right)\right)\left(p\left(\frac{\Delta_i^nX^{(2)}}{\tau_n^{(2)}}\right)+p\left(\sigma^{(2)}_{(i-1)/n}\frac{\Delta_i^nG^{(2)}}{\tau_n^{(2)}}\right)\right)\right|\\
&&+\frac{1}{2\sqrt{n}}\sum_{i=1}^{[nt]}\mathbb{E}\left|\left(p\left(\frac{\Delta_i^nX^{(2)}}{\tau_n^{(2)}}\right)-p\left(\sigma^{(2)}_{(i-1)/n}\frac{\Delta_i^nG^{(2)}}{\tau_n^{(2)}}\right)\right)\left(p\left(\frac{\Delta_i^nX^{(1)}}{\tau_n^{(1)}}\right)
+p\left(\sigma^{(1)}_{(i-1)/n}\frac{\Delta_i^nG^{(1)}}{\tau_n^{(1)}}\right)\right)\right|\\
&\le&\frac{C}{2\sqrt{n}}\sum_{i=1}^{[nt]}\left(\mathbb{E}\left[\left(p\left(\frac{\Delta_i^nX^{(1)}}{\tau_n^{(1)}}\right)-p\left(\sigma^{(1)}_{(i-1)/n}\frac{\Delta_i^nG^{(1)}}{\tau_n^{(1)}}\right)\right)^2\right]\right)^{\frac{1}{2}}\\
&&+\frac{C}{2\sqrt{n}}\sum_{i=1}^{[nt]}\left(\mathbb{E}\left[\left(p\left(\frac{\Delta_i^nX^{(2)}}{\tau_n^{(2)}}\right)-p\left(\sigma^{(2)}_{(i-1)/n}\frac{\Delta_i^nG^{(2)}}{\tau_n^{(2)}}\right)\right)^2\right]\right)^{\frac{1}{2}}\\
&\le&\frac{C}{2\sqrt{n}}\sum_{i=1}^{[nt]}\left(\mathbb{E}\left[\left(\frac{\Delta_i^nX^{(1)}}{\tau_n^{(1)}}-\sigma^{(1)}_{(i-1)/n}\frac{\Delta_i^nG^{(1)}}{\tau_n^{(1)}}\right)^2\right]\right)^{\frac{1}{2}}\\
&&+\frac{C}{2\sqrt{n}}\sum_{i=1}^{[nt]}\left(\mathbb{E}\left[\left(\frac{\Delta_i^nX^{(2)}}{\tau_n^{(2)}}-\sigma^{(2)}_{(i-1)/n}\frac{\Delta_i^nG^{(2)}}{\tau_n^{(2)}}\right)^2\right]\right)^{\frac{1}{2}}.\\
\end{eqnarray*}
Hence we reduce the problem to the univariate case, while the proof of
$$\frac{1}{\sqrt{n}\tau_n^{(1)}}\sum_{i=1}^{[nt]}\left(\mathbb{E}\left[\left|\Delta_i^nX^{(1)}-\sigma_{(i-1)/n}^{(1)}\Delta_i^nG^{(1)}\right|^2\right]\right)^{\frac{1}{2}}\rightarrow0,\ n\rightarrow\infty,$$
$$\frac{1}{\sqrt{n}\tau_n^{(2)}}\sum_{i=1}^{[nt]}\left(\mathbb{E}\left[\left|\Delta_i^nX^{(2)}-\sigma_{(i-1)/n}^{(2)}\Delta_i^nG^{(2)}\right|^2\right]\right)^{\frac{1}{2}}\rightarrow0,\ n\rightarrow\infty,$$
can be found in the proof of Theorem 5 in \cite{BN2011}.\\
Next, we prove that 
$$\mathbb{P}-\lim_{l\rightarrow\infty}\limsup_{n\rightarrow\infty}\sup_{t\in[0,T]}|B_t^{'n,l}+B_t^{''n,l}|=0.$$
We denote
$$B_t^{n,l}:=\sum_{j=1}^{[lt]}\sum_{i\in I_l(j)}\left(\sigma_{(i-1)/n}^{(1)}\sigma_{(i-1)/n}^{(2)}-\sigma_{(j-1)/l}^{(1)}\sigma_{(j-1)/l}^{(2)}\right)\times\frac{1}{\sqrt{n}}\left(p\left(\frac{\Delta_i^nG^{(1)}}{\tau_n^{(1)}}\right)p\left(\frac{\Delta_i^nG^{(2)}}{\tau_n^{(2)}}\right)-\mu_n\right).$$
Then we have, for a fixed $l$,
$$|B_t^{'n,l}+B_t^{''n,l}-B_t^{n,l}|\le\left|\frac{\mu_n}{\sqrt{n}}\sum_{j=1}^{[lt]}\sigma_{(j-1)/n}^{(1)}\sigma_{(j-1)/n}^{(2)}\right|\overset{a.s.}{\rightarrow}0,\ n\rightarrow\infty.$$
Since the volatility processes $\sigma^{(i)}$ are $\eta^{(i)}$-H\"older continuous and bounded on compact interval for $i=1,2$, we have that $\sigma_{(i-1)/n}^{(1)}\sigma_{(i-1)/n}^{(2)}-\sigma_{(j-1)/l}^{(1)}\sigma_{(j-1)/l}^{(2)}$ is $\eta^{(1)}\wedge\eta^{(2)}$-H\"older continuous where $a\wedge b$ means $min\{a,b\}$, then by the convergence (4) in \cite{CN2014}, we have
$$\mathbb{P}-\lim_{l\rightarrow\infty}\limsup_{n\rightarrow\infty}\sup_{t\in[0,T]}|B_t^{n,l}|=0.$$
Hence $$\mathbb{P}-\lim_{l\rightarrow\infty}\limsup_{n\rightarrow\infty}\sup_{t\in[0,T]}|B_t^{'n,l}+B_t^{''n,l}|=0.$$
For $C_t^{n,l}$, we have
\begin{eqnarray*}
C_t^{n,l}&=&\frac{1}{\sqrt{n}}\sum_{j=1}^{[lt]}\sigma^{(1)}_{(j-1)/l}\sigma^{(2)}_{(j-1)/l}\sum_{i\in I_l(j)}p\left(\frac{\Delta_j^nG^{(1)}}{\tau_n^{(1)}}\right)p\left(\frac{\Delta_j^nG^{(2)}}{\tau_n^{(2)}}\right)-\frac{\sqrt{n}}{l}\mu_n\sum_{j=1}^{[lt]}\sigma_{(j-1)/l}^{(1)}\sigma_{(j-1)/l}^{(2)}\\
&=&\frac{1}{\sqrt{n}}\sum_{j=1}^{[lt]}\sigma^{(1)}_{(j-1)/l}\sigma^{(2)}_{(j-1)/l}\sum_{i\in I_l(j)}\left(p\left(\frac{\Delta_j^nG^{(1)}}{\tau_n^{(1)}}\right)p\left(\frac{\Delta_j^nG^{(2)}}{\tau_n^{(2)}}\right)-\mu_n\right)\\
&&+\left(\frac{\#I_l(j)}{\sqrt{n}}-\frac{\sqrt{n}}{l}\right)\mu_n\sum_{j=1}^{[lt]}\sigma_{(j-1)/l}^{(1)}\sigma_{(j-1)/l}^{(2)}.
\end{eqnarray*}
Since $\#I_l(j)=\left[\frac{n}{l}\right]$ or $\left[\frac{n}{l}\right]+1$, we have
$$\left|\left(\frac{\#I_l(j)}{\sqrt{n}}-\frac{\sqrt{n}}{l}\right)\mu_n\sum_{j=1}^{[lt]}\sigma_{(j-1)/l}^{(1)}\sigma_{(j-1)/l}^{(2)}\right|\le\left|\frac{\mu_n}{\sqrt{n}}\sum_{j=1}^{[lt]}\sigma_{(j-1)/l}^{(1)}\sigma_{(j-1)/l}^{(2)}\right|\rightarrow0,n\rightarrow\infty.$$
If we denote by $\mathcal{G}$ the $\sigma$-algebra generated by the process $\{(G_t^{(1)}-G_0^{(1)},G_t^{(2)}-G_0^{(2)})\}_{t\ge0}$ and when $\sigma^{(1)}_{(j-1)/l},\sigma^{(2)}_{(j-1)/l},\ j=1,..,[lt]$ are $\mathcal{G}$-measurable, we have
\begin{eqnarray*}
&&\sum_{j=1}^{[lt]}\sigma^{(1)}_{(j-1)/l}\sigma^{(2)}_{(j-1)/l}\frac{1}{\sqrt{n}}\sum_{i\in I_l(j)}\left(p\left(\frac{\Delta_j^nG^{(1)}}{\tau_n^{(1)}}\right)p\left(\frac{\Delta_j^nG^{(2)}}{\tau_n^{(2)}}\right)-\mu_n\right)\\
&\overset{st}{\rightarrow}&\sum_{j=1}^{[lt]}\sigma^{(1)}_{(j-1)/l}\sigma^{(2)}_{(j-1)/l}\sqrt{\beta}(B_{j/l}-B_{(j-1)/l}),\ n\rightarrow\infty\\
&\overset{\mathbb{P}}{\rightarrow}&\sqrt{\beta}\int_0^t\sigma_s^{(1)}\sigma_s^{(2)}dB_s.
\end{eqnarray*}
Hence
$$C_t^{n,l}\overset{\mathcal{L}}{\rightarrow}\sqrt{\beta}\int_0^t\sigma_s^{(1)}\sigma_s^{(2)}dB_s.$$
The convergence of $D_t^n$ follows as in the proof of Proposition 8.5 in \cite{AA2019}.
\end{proof}

\begin{proof}[Proof of Theorem \ref{GBi_CLT}]
We prove the central limit theorem for the generalised class of functions $\phi$ first, and the discussion of choices of the generalisation will be found in Remark \ref{discussgeneralisation}.\\
We only prove for $I(x)=1$ and $I(x)=1_{\{x\ge0\}}$. From Lemma 1 of \cite{BN2011}, we have 
$$\mathbb{E}\left[\phi\left(\frac{\Delta_i^nX^{(j)}}{\tau_n^{(j)}}\right)\right]\le\mathbb{E}\left[\left|\frac{\Delta_i^nX^{(j)}}{\tau_n^{(j)}}\right|^q\right]\le C,$$
$$\mathbb{E}\left[\phi\left(\frac{\sigma_{(i-1)/n}^{(j)}\Delta_i^nG^{(j)}}{\tau_n^{(j)}}\right)\right]\le\mathbb{E}\left[\left|\frac{\sigma_{(i-1)/n}^{(j)}\Delta_i^nG^{(j)}}{\tau_n^{(j)}}\right|^q\right]\le C\ i=1,...,[nt],j=1,2.$$
It is obvious that $\Phi(x,y):=\phi(x)\phi(y)$ is square-integrable w.r.t. any 2-dimensional Gaussian measure, thus all the results established for $V(G,\phi)$ still holds.\\
We will use almost the same method as in the proof of Theorem \ref{Bi_CLT1} to prove the remaining part.\\
 We denote $\mu_n:=\mathbb{E}\left[\Phi\left(\frac{\Delta_1^nG^{(1)}}{\tau_n^{(1)}},\frac{\Delta_1^nG^{(2)}}{\tau_n^{(2)}}\right)\right]$.\\

For a fixed $n$, let $l\le n$, then we have the following decomposition
\begin{eqnarray*}
&&\frac{1}{\sqrt{n}}\sum_{i=1}^{[nt]}\phi\left(\frac{\Delta_i^nX^{(1)}}{\tau_n^{(1)}}\right)\phi\left(\frac{\Delta_i^nX^{(2)}}{\tau_n^{(2)}}\right)-\sqrt{n}\mathbb{E}\left[\Phi\left(\frac{\Delta_1^nG^{(1)}}{\tau_n^{(1)}},\frac{\Delta_1^nG^{(2)}}{\tau_n^{(2)}}\right)\right]\int_0^t\sigma_s^{(1)q}\sigma_s^{(2)q}ds\\
&=&\frac{1}{\sqrt{n}}\sum_{i=1}^{[nt]}\phi\left(\frac{\Delta_i^nX^{(1)}}{\tau_n^{(1)}}\right)\phi\left(\frac{\Delta_i^nX^{(2)}}{\tau_n^{(2)}}\right)-\sqrt{n}\mu_n\int_0^t\sigma_s^{(1)q}\sigma_s^{(2)q}ds
\\
&=&\frac{1}{\sqrt{n}}\sum_{i=1}^{[nt]}\left(\phi\left(\frac{\Delta_i^nX^{(1)}}{\tau_n^{(1)}}\right)\phi\left(\frac{\Delta_i^nX^{(2)}}{\tau_n^{(2)}}\right)-\phi\left(\sigma^{(1)}_{(i-1)/n}\frac{\Delta_i^nG^{(1)}}{\tau_n^{(1)}}\right)\phi\left(\sigma^{(2)}_{(i-1)/n}\frac{\Delta_i^nG^{(2)}}{\tau_n^{(2)}}\right)\right)\\
&&+\frac{1}{\sqrt{n}}\sum_{i=1}^{[nt]}\phi\left(\sigma^{(1)}_{(i-1)/n}\frac{\Delta_i^nG^{(1)}}{\tau_n^{(1)}}\right)\phi\left(\sigma^{(2)}_{(i-1)/n}\frac{\Delta_i^nG^{(2)}}{\tau_n^{(2)}}\right)\\
&&-\frac{1}{\sqrt{n}}\sum_{j=1}^{[lt]}\sigma^{(1)q}_{(j-1)/l}\sigma^{(2)q}_{(j-1)/l}\sum_{i\in I_l(j)}\phi\left(\frac{\Delta_i^nG^{(1)}}{\tau_n^{(1)}}\right)\phi\left(\frac{\Delta_i^nG^{(2)}}{\tau_n^{(2)}}\right)\\
&&+\frac{\sqrt{n}}{l}\mu_n\sum_{j=1}^{[lt]}\sigma_{(j-1)/l}^{(1)q}\sigma_{(j-1)/l}^{(2)q}-\frac{1}{\sqrt{n}}\sum_{i=1}^{[nt]}\sigma_{(i-1)/n}^{(1)q}\sigma_{(i-1)/n}^{(2)q}\\
&&+\frac{1}{\sqrt{n}}\sum_{j=1}^{[lt]}\sigma^{(1)q}_{(j-1)/l}\sigma^{(2)q}_{(j-1)/l}\sum_{i\in I_l(j)}\phi\left(\frac{\Delta_i^nG^{(1)}}{\tau_n^{(1)}}\right)\phi\left(\frac{\Delta_i^nG^{(2)}}{\tau_n^{(2)}}\right)-\frac{\sqrt{n}}{l}\mu_n\sum_{j=1}^{[lt]}\sigma_{(j-1)/l}^{(1)q}\sigma_{(j-1)/l}^{(2)q}\\
&&+\frac{1}{\sqrt{n}}\sum_{i=1}^{[nt]}\sigma_{(i-1)/n}^{(1)q}\sigma_{(i-1)/n}^{(2)q}-\sqrt{n}\mu_n\int_0^t\sigma_s^{(1)q}\sigma_s^{(2)q}ds,\\
\end{eqnarray*}
where $$I_l(j)=\left\{i:\frac{i}{n}\in\left(\frac{j-1}{l},\frac{j}{l}\right]\right\}.$$
Define
\begin{eqnarray*}
A_t^n:&=&\frac{1}{\sqrt{n}}\sum_{i=1}^{[nt]}\left(\phi\left(\frac{\Delta_i^nX^{(1)}}{\tau_n^{(1)}}\right)\phi\left(\frac{\Delta_i^nX^{(2)}}{\tau_n^{(2)}}\right)-\phi\left(\sigma^{(1)}_{(i-1)/n}\frac{\Delta_i^nG^{(1)}}{\tau_n^{(1)}}\right)\phi\left(\sigma^{(2)}_{(i-1)/n}\frac{\Delta_i^nG^{(2)}}{\tau_n^{(2)}}\right)\right),\\
B_t^{'n,l}:&=&\frac{1}{\sqrt{n}}\sum_{i=1}^{[nt]}\phi\left(\sigma^{(1)}_{(i-1)/n}\frac{\Delta_i^nG^{(1)}}{\tau_n^{(1)}}\right)\phi\left(\sigma^{(2)}_{(i-1)/n}\frac{\Delta_i^nG^{(2)}}{\tau_n^{(2)}}\right)\\
&&-\frac{1}{\sqrt{n}}\sum_{j=1}^{[lt]}\sigma^{(1)q}_{(j-1)/l}\sigma^{(2)q}_{(j-1)/l}\sum_{i\in I_l(j)}\phi\left(\frac{\Delta_i^nG^{(1)}}{\tau_n^{(1)}}\right)\phi\left(\frac{\Delta_i^nG^{(2)}}{\tau_n^{(2)}}\right)\\
&=&\frac{1}{\sqrt{n}}\sum_{i=1}^{[nt]}\sigma^{(1)q}_{(i-1)/n}\sigma^{(2)q}_{(i-1)/n}\phi\left(\frac{\Delta_i^nG^{(1)}}{\tau_n^{(1)}}\right)\phi\left(\frac{\Delta_i^nG^{(2)}}{\tau_n^{(2)}}\right)\\
&&-\frac{1}{\sqrt{n}}\sum_{j=1}^{[lt]}\sigma^{(1)q}_{(j-1)/l}\sigma^{(2)q}_{(j-1)/l}\sum_{i\in I_l(j)}\phi\left(\frac{\Delta_i^nG^{(1)}}{\tau_n^{(1)}}\right)\phi\left(\frac{\Delta_i^nG^{(2)}}{\tau_n^{(2)}}\right)\\
B_t^{''n,l}:&=&\frac{\sqrt{n}}{l}\mu_n\sum_{j=1}^{[lt]}\sigma_{(j-1)/l}^{(1)q}\sigma_{(j-1)/l}^{(2)q}-\frac{1}{\sqrt{n}}\sum_{i=1}^{[nt]}\sigma_{(i-1)/n}^{(1)q}\sigma_{(i-1)/n}^{(2)q},\\
C_{t}^{n,l}:&=&\frac{1}{\sqrt{n}}\sum_{j=1}^{[lt]}\sigma^{(1)q}_{(j-1)/l}\sigma^{(2)q}_{(j-1)/l}\sum_{i\in I_l(j)}\phi\left(\frac{\Delta_i^nG^{(1)}}{\tau_n^{(1)}}\right)\phi\left(\frac{\Delta_i^nG^{(2)}}{\tau_n^{(2)}}\right)-\frac{\sqrt{n}}{l}\mu_n\sum_{j=1}^{[lt]}\sigma_{(j-1)/l}^{(1)q}\sigma_{(j-1)/l}^{(2)q},\\
D_t^{n}:&=&\frac{1}{\sqrt{n}}\sum_{i=1}^{[nt]}\sigma_{(i-1)/n}^{(1)q}\sigma_{(i-1)/n}^{(2)q}-\sqrt{n}\mu_n\int_0^t\sigma_s^{(1)q}\sigma_s^{(2)q}ds.\\
\end{eqnarray*}

Notice that since $\sigma^{(j)}$ is bounded on compact interval, $\sigma^{(j)q}$ is still non-negative, c\`adl\`ag and $\eta^{(i)}$-H\"older continuous for $i=1,2$. Thus the proof of convergences of $B_t^{'n,l}+B_t^{''n,l}$, $C_{t}^{n,l}$ and $D_t^{n}$ does not require any changes except that the limit of $C_{t}^{n,l}$ becomes 
$$C_{t}^{n,l}\overset{\mathcal{L}}{\longrightarrow}\sqrt{\beta}\int_0^t\left(\sigma_s^{(1)}\sigma_s^{(2)}\right)^qdB_s.$$

Next, we prove the convergence of $A_t^n$.\\

\begin{eqnarray*}
\mathbb{E}[|A_t^n|]&=&\frac{1}{\sqrt{n}}\mathbb{E}\left[\left|\sum_{i=1}^{[nt]}\left(\phi\left(\frac{\Delta_i^nX^{(1)}}{\tau_n^{(1)}}\right)\phi\left(\frac{\Delta_i^nX^{(2)}}{\tau_n^{(2)}}\right)-\phi\left(\sigma^{(1)}_{(i-1)/n}\frac{\Delta_i^nG^{(1)}}{\tau_n^{(1)}}\right)\phi\left(\sigma^{(2)}_{(i-1)/n}\frac{\Delta_i^nG^{(2)}}{\tau_n^{(2)}}\right)\right)\right|\right]\\
&\le&\frac{1}{\sqrt{n}}\sum_{i=1}^{[nt]}\mathbb{E}\left|\phi\left(\frac{\Delta_i^nX^{(1)}}{\tau_n^{(1)}}\right)\phi\left(\frac{\Delta_i^nX^{(2)}}{\tau_n^{(2)}}\right)-\phi\left(\sigma^{(1)}_{(i-1)/n}\frac{\Delta_i^nG^{(1)}}{\tau_n^{(1)}}\right)\phi\left(\sigma^{(2)}_{(i-1)/n}\frac{\Delta_i^nG^{(2)}}{\tau_n^{(2)}}\right)\right|\\
&=&\frac{1}{2\sqrt{n}}\sum_{i=1}^{[nt]}\mathbb{E}\left|\left(\phi\left(\frac{\Delta_i^nX^{(1)}}{\tau_n^{(1)}}\right)-\phi\left(\sigma^{(1)}_{(i-1)/n}\frac{\Delta_i^nG^{(1)}}{\tau_n^{(1)}}\right)\right)\left(\phi\left(\frac{\Delta_i^nX^{(2)}}{\tau_n^{(2)}}\right)+\phi\left(\sigma^{(2)}_{(i-1)/n}\frac{\Delta_i^nG^{(2)}}{\tau_n^{(2)}}\right)\right)\right.\\
&&+\left.\left(\phi\left(\frac{\Delta_i^nX^{(2)}}{\tau_n^{(2)}}\right)-\phi\left(\sigma^{(2)}_{(i-1)/n}\frac{\Delta_i^nG^{(2)}}{\tau_n^{(2)}}\right)\right)\left(\phi\left(\frac{\Delta_i^nX^{(1)}}{\tau_n^{(1)}}\right)+\phi\left(\sigma^{(1)}_{(i-1)/n}\frac{\Delta_i^nG^{(1)}}{\tau_n^{(1)}}\right)\right)\right|\\
&\le&\frac{1}{2\sqrt{n}}\sum_{i=1}^{[nt]}\mathbb{E}\left|\left(\phi\left(\frac{\Delta_i^nX^{(1)}}{\tau_n^{(1)}}\right)-\phi\left(\sigma^{(1)}_{(i-1)/n}\frac{\Delta_i^nG^{(1)}}{\tau_n^{(1)}}\right)\right)\left(\phi\left(\frac{\Delta_i^nX^{(2)}}{\tau_n^{(2)}}\right)+\phi\left(\sigma^{(2)}_{(i-1)/n}\frac{\Delta_i^nG^{(2)}}{\tau_n^{(2)}}\right)\right)\right|\\
&&+\frac{1}{2\sqrt{n}}\sum_{i=1}^{[nt]}\mathbb{E}\left|\left(\phi\left(\frac{\Delta_i^nX^{(2)}}{\tau_n^{(2)}}\right)-\phi\left(\sigma^{(2)}_{(i-1)/n}\frac{\Delta_i^nG^{(2)}}{\tau_n^{(2)}}\right)\right)\left(\phi\left(\frac{\Delta_i^nX^{(1)}}{\tau_n^{(1)}}\right)
+\phi\left(\sigma^{(1)}_{(i-1)/n}\frac{\Delta_i^nG^{(1)}}{\tau_n^{(1)}}\right)\right)\right|.\\
\end{eqnarray*}
By the inequality $$||x|^q-|y|^q|\le q|x-y|(|x|^{q-1}+|y|^{q-1}),\ \forall q\ge1,$$
and  $$||x1_{\{x\ge0\}}|^q-|y1_{\{y\ge0\}}|^q|\le q|x1_{\{x\ge0\}}-y1_{\{y\ge0\}}|(|x1_{\{x\ge0\}}|^{q-1}+|y1_{\{y\ge0\}}|^{q-1})\le q|x-y|(|x|^{q-1}+|y|^{q-1}),\ \forall q\ge1.$$
We have that $$\left|\phi\left(\frac{\Delta_i^nX^{(1)}}{\tau_n^{(1)}}\right)-\phi\left(\sigma^{(1)}_{(i-1)/n}\frac{\Delta_i^nG^{(1)}}{\tau_n^{(1)}}\right)\right|\le q\left|\frac{\Delta_i^nX^{(1)}}{\tau_n^{(1)}}-\sigma^{(1)}_{(i-1)/n}\frac{\Delta_i^nG^{(1)}}{\tau_n^{(1)}}\right|\left(\left|\frac{\Delta_i^nX^{(1)}}{\tau_n^{(1)}}\right|^{q-1} + \left|\sigma^{(1)}_{(i-1)/n}\frac{\Delta_i^nG^{(1)}}{\tau_n^{(1)}}\right|^{q-1}\right),$$
$$\left|\phi\left(\frac{\Delta_i^nX^{(2)}}{\tau_n^{(2)}}\right)-\phi\left(\sigma^{(2)}_{(i-1)/n}\frac{\Delta_i^nG^{(2)}}{\tau_n^{(2)}}\right)\right|\le q\left|\frac{\Delta_i^nX^{(2)}}{\tau_n^{(2)}}-\sigma^{(2)}_{(i-1)/n}\frac{\Delta_i^nG^{(2)}}{\tau_n^{(2)}}\right|\left(\left|\frac{\Delta_i^nX^{(2)}}{\tau_n^{(2)}}\right|^{q-1} + \left|\sigma^{(2)}_{(i-1)/n}\frac{\Delta_i^nG^{(2)}}{\tau_n^{(2)}}\right|^{q-1}\right).$$
Then by H\"older inequality and Lemma 1 in \cite{BN2011}, we have
\begin{eqnarray*}
\mathbb{E}[|A_t^n|]&\le&\frac{C}{2\sqrt{n}}\sum_{i=1}^{[nt]}\left(\mathbb{E}\left[\left(\frac{\Delta_i^nX^{(1)}}{\tau_n^{(1)}}-\sigma^{(1)}_{(i-1)/n}\frac{\Delta_i^nG^{(1)}}{\tau_n^{(1)}}\right)^2\right]\right)^{\frac{1}{2}}\\
&&+\frac{C}{2\sqrt{n}}\sum_{i=1}^{[nt]}\left(\mathbb{E}\left[\left(\frac{\Delta_i^nX^{(2)}}{\tau_n^{(2)}}-\sigma^{(2)}_{(i-1)/n}\frac{\Delta_i^nG^{(2)}}{\tau_n^{(2)}}\right)^2\right]\right)^{\frac{1}{2}}.\\
\end{eqnarray*}
The remainder of the proof of the convergence is identical to the proof of Theorem \ref{Bi_CLT1}.

\end{proof}

\begin{rmk}\label{discussgeneralisation} We will discuss here which class of functions we can generalise CLT to. First let $\psi: \mathbb{R}\rightarrow \mathbb{R}$ be any non-constant function and define $\Psi(x,y):=\psi(x)\psi(y)$. Then for the central limit theorem for the Gaussian cores, we need that $\mathbb{E}\left[\Psi^2(X)\right]<\infty$ for any $2$-dimensional Gaussian vector $X$.\\
Notice that in the proof of Theorem \ref{Bi_CLT1}, we decompose $$\frac{1}{\sqrt{n}}\sum_{i=1}^{[nt]}p\left(\frac{\Delta_i^nX^{(1)}}{\tau_n^{(1)}}\right)p\left(\frac{\Delta_i^nX^{(2)}}{\tau_n^{(2)}}\right)-\sqrt{n}\mathbb{E}\left[h\left(\frac{\Delta_1^nG^{(1)}}{\tau_n^{(1)}},\frac{\Delta_1^nG^{(2)}}{\tau_n^{(2)}}\right)\right]\int_0^t\sigma_s^{(1)}\sigma_s^{(2)}ds$$ into five different parts, and in the second part
\begin{eqnarray*}
B_t^{'n,l}&=&\frac{1}{\sqrt{n}}\sum_{i=1}^{[nt]}p\left(\sigma^{(1)}_{(i-1)/n}\frac{\Delta_i^nG^{(1)}}{\tau_n^{(1)}}\right)p\left(\sigma^{(2)}_{(i-1)/n}\frac{\Delta_i^nG^{(2)}}{\tau_n^{(2)}}\right)\\
&&-\frac{1}{\sqrt{n}}\sum_{j=1}^{[lt]}\sigma^{(1)}_{(j-1)/l}\sigma^{(2)}_{(j-1)/l}\sum_{i\in I_l(j)}p\left(\frac{\Delta_i^nG^{(1)}}{\tau_n^{(1)}}\right)p\left(\frac{\Delta_i^nG^{(2)}}{\tau_n^{(2)}}\right)\\
&=&\frac{1}{\sqrt{n}}\sum_{i=1}^{[nt]}\sigma^{(1)}_{(i-1)/n}\sigma^{(2)}_{(i-1)/n}p\left(\frac{\Delta_i^nG^{(1)}}{\tau_n^{(1)}}\right)p\left(\frac{\Delta_i^nG^{(2)}}{\tau_n^{(2)}}\right)\\
&&-\frac{1}{\sqrt{n}}\sum_{j=1}^{[lt]}\sigma^{(1)}_{(j-1)/l}\sigma^{(2)}_{(j-1)/l}\sum_{i\in I_l(j)}p\left(\frac{\Delta_i^nG^{(1)}}{\tau_n^{(1)}}\right)p\left(\frac{\Delta_i^nG^{(2)}}{\tau_n^{(2)}}\right),
\end{eqnarray*}
we use the property that $p(ax)=ap(x)$ for any $a\ge0$. For simplicity, we will omit the superscripts in the following discussion. Such factorization property is necessary since we need to separate $\sigma_{(i-1)/n}$ and $p\left(\frac{\Delta_i^nG}{\tau_n}\right)$ to prove the convergence of $B_t^{n,l}$.
Thus, for the function $\psi$, we let it have the following factorization property:
\begin{equation}\label{functionalequation}
\psi(xy)=\alpha(x)\beta(y),\ x\in [0,\infty),y\in\mathbb{R}.
\end{equation}
Here the function $\alpha$ needs to satisfy that $\{\alpha(\sigma_s)\}_{0\le s \le T}$ is $\zeta$-H\"older continuous for $\zeta>1/2$. Considering that $\sigma$ is bounded on the compact interval, we only need to work out what kind of continuity should be satisfied by $\alpha$ on bounded sets. Since uniform continuity is not sufficient, we assume that $\alpha$ is $\xi$-H\"older continuous.
Then, if $\xi>\frac{1}{2\eta}$, where $\sigma$ is $\eta$-H\"older continuous with $\eta>1/2$, $\{\alpha(\sigma_s)\}_{0\le s \le T}$ is $\zeta$-H\"older continuous for $\zeta>1/2$.\\
Now we try to solve functional equation (\ref{functionalequation}). Since $\alpha$ is not the zero function, w.l.o.g., we assume that $\alpha(1)\neq0$, then we fix $x=1$, we have
$$\psi(y)=\alpha(1)\beta(y),$$
which means that $\psi\ \propto\ \beta$ in $\mathbb{R}$.
Let $x,y\in[0,\infty)$, we have
$$\alpha(x)\beta(y)=\psi(xy)=\psi(yx)=\alpha(y)\beta(x).$$
Let $x=1$ in the above equation, we get
$$\beta(y)=\frac{\beta(1)}{\alpha(1)}\alpha(y),$$
which means that $\beta\ \propto\ \alpha$ on $[0,\infty)$. Thus for all $x,y\in[0,\infty)$, there is a constant $K$ such that
$$\psi(xy)=K\psi(x)\psi(y).$$
If we further assume that $\psi$ is normalised and satisfies $\psi(1)=1$, then we have
 \begin{equation}\label{multiplicative}
 \psi(xy)=\psi(x)\psi(y),\ \ x,y\in[0,\infty).
 \end{equation}
The function that satisfies Equation ($\ref{multiplicative}$) is the so-called multiplicative function in number theory. Since we require $\alpha$ to be H\"older continuous, thus $\psi$ is also continuous. By Theorem 3 in Chapter 2.1 of \cite{AC1966}, the complete set of solutions of functional equation (\ref{multiplicative}) is $\{x^p,0\}$. The next step is to extend the domain of $\psi$ from $[0,\infty)$ to $\mathbb{R}$ while the equation (\ref{functionalequation}) still holds. In this step, we can introduce another factor $I(x)=1$ or $1_{\{x\ge0\}}$ or $1_{\{x\le0\}}$ or $sgn(x)$ into function $\psi(x)=|x|^p$. Since we need H\"older continuity of $\alpha$ on a compact set, we will work on the case that $p\ge1$. If we further assume that we only focus on non-negative valued functions, then 
$$\psi(x)=|x|^pI(x),\ I(x)=1, \mathrm{or}\ 1_{\{x\ge0\}}, \mathrm{or}\ 1_{\{x\le0\}},\ p\ge1$$ 
is the only family of functions satisfying Equation (\ref{functionalequation}).\\
Notice that this does not contradict our choice $p(x)=x1_{\{x\ge0\}}$ since $x1_{\{x\ge0\}}=|x|1_{\{x\ge0\}}$. Hence Theorem \ref{GBi_CLT} is indeed a generalisation of Theorem \ref{Bi_CLT1}.\\
\end{rmk}

\clearpage

\addcontentsline{toc}{section}{\bibname} 

\end{document}